\newtheorem{theorem}{Theorem}[section]  
\newtheorem{lemma}[theorem]{Lemma}
\newtheorem{conjecture}[theorem]{Conjecture}
\theoremstyle{definition}
\newtheorem{definition}[theorem]{Definition}
\theoremstyle{remark}
\numberwithin{equation}{section}
\newcommand{\op}{\operatorname}
\newcommand\numberthis{\addtocounter{equation}{1}\tag{\theequation}}
\begin{document}

\title{Irreducible Characters and Semisimple Coadjoint Orbits}

\author{Benjamin Harris}
\email{benjaminlharris@outlook.com}

\author{Yoshiki Oshima}
\address{Graduate School of Information Science and Technology,
 Osaka University, 1-5 Yamadaoka, Suita, Osaka 565-0871, Japan}
\email{oshima@ist.osaka-u.ac.jp}

\subjclass[2010]{22E46}

\date{October 27, 2017}


\keywords{Semisimple Orbit, Coadjoint Orbit, The Orbit Method, Kirillov's Character Formula, Cohomological Induction, Parabolic Induction, Reductive Group}

\begin{abstract}
When $G_{\mathbb{R}}$ is a real, linear algebraic group, the orbit method predicts that nearly all of the unitary dual of $G_{\mathbb{R}}$ consists of representations naturally associated to orbital parameters $(\mathcal{O},\Gamma)$. If $G_{\mathbb{R}}$ is a real, reductive group and $\mathcal{O}$ is a semisimple coadjoint orbit, the corresponding unitary representation $\pi(\mathcal{O},\Gamma)$ may be constructed utilizing Vogan and Zuckerman's cohomological induction together with Mackey's real parabolic induction. In this article, we give a geometric character formula for such representations $\pi(\mathcal{O},\Gamma)$. Special cases of this formula were previously obtained by Harish-Chandra and Kirillov when $G_{\mathbb{R}}$ is compact and by Rossmann and Duflo when $\pi(\mathcal{O},\Gamma)$ is tempered.
\end{abstract}

\maketitle

\section{Introduction}
\label{sec:intro}

Let $G$ be a connected, complex reductive algebraic group, let $\sigma$ be an anti-holomorphic involution of $G$, and let $(G^{\sigma})_e\subset G_{\mathbb{R}}\subset G^{\sigma}$ be an open subgroup of the fixed point set of $\sigma$. In this paper, we will call such a group $G_{\mathbb{R}}$ a real, reductive group. Let $\mathfrak{g}$ (resp.\ $\mathfrak{g}_{\mathbb{R}}$) denote the Lie algebra of $G$ (resp.\ $G_{\mathbb{R}}$), let 
\[\mathfrak{g}^*:=\operatorname{Hom}_{\mathbb{C}}(\mathfrak{g},\mathbb{C}),\ \sqrt{-1} \mathfrak{g}_{\mathbb{R}}^*:=\operatorname{Hom}_{\mathbb{R}}(\mathfrak{g}_{\mathbb{R}},\sqrt{-1}\mathbb{R})=(\mathfrak{g}^*)^{-\sigma}\] 
denote the dual space of $\mathfrak{g}$ and the collection of purely imaginary valued linear functionals on $\mathfrak{g}_{\mathbb{R}}$ respectively. If $\xi\in \sqrt{-1}\mathfrak{g}_{\mathbb{R}}^*$, then we denote by $G(\xi)$ (resp.\ $G_{\mathbb{R}}(\xi)$) the stabilizer of $\xi$ in $G$ (resp.\ $G_{\mathbb{R}}$), and by $\mathfrak{g}(\xi)$ (resp.\ $\mathfrak{g}_{\mathbb{R}}(\xi)$) its Lie algebra. If $\xi\in \sqrt{-1}\mathfrak{g}_{\mathbb{R}}^*$, then $G_{\mathbb{R}}(\xi)$ admits a natural Duflo double cover $\widetilde{G_{\mathbb{R}}}(\xi)$ (see Section \ref{subsec:double_covers} for a definition). A one-dimensional, unitary representation of $\widetilde{G_{\mathbb{R}}}(\xi)$ is \emph{genuine} if it does not factor through the quotient $G_{\mathbb{R}}(\xi)$. A coadjoint $G_{\mathbb{R}}$-orbit $\mathcal{O}\subset \sqrt{-1}\mathfrak{g}_{\mathbb{R}}^*$ is called \emph{semisimple} if it is closed.

\begin{definition} \label{def:semisimple_orbital_parameter}
A \emph{semisimple orbital parameter} for $G_{\mathbb{R}}$ is a pair $(\mathcal{O},\Gamma)$ where 
\begin{enumerate}[(a)]
\item $\mathcal{O}\subset \sqrt{-1}\mathfrak{g}_{\mathbb{R}}^*$ is a semisimple, coadjoint orbit
\item for every $\lambda\in \mathcal{O}$, $\Gamma_{\lambda}$ is a genuine, one-dimensional, unitary representation of $\widetilde{G_{\mathbb{R}}}(\lambda)$.
\end{enumerate}
In addition, this pair must satisfy
\begin{enumerate}[(i)]
\item $g\cdot \Gamma_{\lambda}=\Gamma_{\op{Ad}^*(g) \lambda}$ 
for every $g\in G_{\mathbb{R}}$, $\lambda\in \mathcal{O}$
\item $d\Gamma_{\lambda}=\lambda|_{\mathfrak{g}_{\mathbb{R}}(\lambda)}$
for every $\lambda\in \mathcal{O}$.
\end{enumerate}
\end{definition}

Giving a semisimple orbital parameter is equivalent to giving a $G_{\mathbb{R}}$-equivariant Hermitian quantum bundle with equivariant curvature $\lambda\mapsto \mu(\lambda)+\Omega$ on a closed coadjoint orbit (see pages 261, 276-277, 289-290 of \cite{Ve94} for terminology).

Mackey's work \cite{Ma49}, \cite{Ma52} reduces the problem of constructing a unitary representation $\pi(\mathcal{O},\Gamma)$ for every semisimple orbital parameter $(\mathcal{O},\Gamma)$ to the case of elliptic orbital parameters. To every elliptic orbital parameter $(\mathcal{O},\Gamma)$, Zuckerman (unpublished) and Vogan \cite{Vog81}, \cite{Vog84} associate an at most finite sum $\pi(\mathcal{O},\Gamma)$ of irreducible, unitary representations of $G_{\mathbb{R}}$. In the case where $\mathcal{O}$ is in the good range (roughly meaning that $\mathcal{O}$ is sufficiently far from the nilpotent cone, see Section \ref{subsec:VZ_construction} for a precise definition), the representation $\pi(\mathcal{O},\Gamma)$ is irreducible. The special case where $\mathcal{O}$ is of maximal dimension was carried out earlier by Harish-Chandra \cite{HC65c}, \cite{HC66}, and the special case where $G_{\mathbb{R}}$ is compact was understood much earlier by Cartan \cite{Car13} and Weyl \cite{Wey25}, \cite{Wey26a}, \cite{Wey26b}.
\bigskip

If $\pi$ is an irreducible, admissible representation of $G_{\mathbb{R}}$, we denote the Harish-Chandra character of $\pi$ by $\Theta(\pi)\in C^{-\infty}(G_{\mathbb{R}})$, and we denote by $\theta(\pi)\in C^{-\infty}(\mathfrak{g}_{\mathbb{R}})$
the Lie algebra analogue of $\Theta(\pi)$ (see Section \ref{subsec:Rossmann_characters} for definitions). When $(\mathcal{O},\Gamma)$ is a semisimple orbital parameter and $\mathcal{O}$ is in the good range, we write $\theta(\mathcal{O},\Gamma)$ instead of $\theta(\pi(\mathcal{O},\Gamma))$. If $\mathfrak{c}$ is the universal Cartan subalgebra of $\mathfrak{g}$ with Weyl group $W$, then every irreducible, admissible representation $\pi$ of $G$ has an \emph{infinitesimal character}, which is a $W$-orbit in $\mathfrak{c}^*$. Further, there exists a natural fibration
\[q\colon \mathfrak{g}^*\rightarrow \mathfrak{c}^*/W.\]
We denote the inverse image of the infinitesimal character of an irreducible, admissible representation $\pi$ of $G_{\mathbb{R}}$ under $q$ by $\Omega(\pi)$, and we call $\Omega(\pi)$ the \emph{geometric infinitesimal character} of $\pi$ (see Section \ref{subsec:Rossmann_characters} for further discussion). We say $\pi$ has \emph{regular infinitesimal character} if the infinitesimal character of $\pi$ is a $W$-orbit in $\mathfrak{c}^*$ consisting of regular elements or equivalently if $\Omega(\pi)$ is a single closed, coadjoint $G$-orbit. In this case, $\Omega(\pi)$ has a natural $G$-invariant symplectic form $\omega$ (so-called Kirillov-Kostant-Souriau form). 

Let $\Omega\subset \mathfrak{g}^*$ be a regular coadjoint $G$-orbit with complex dimension $2m$, and let $\mathcal{M}\subset \Omega$ be an oriented, middle-dimensional, real analytic submanifold that is bounded in the real direction and satisfies a tempered growth condition in the imaginary direction (see Section \ref{subsec:Rossmann_characters} for precise definitions). If $\mathcal{F}[\mu]$ denotes the Fourier transform of a smooth compactly supported density $\mu\in C_c^{\infty}(\mathfrak{g}_{\mathbb{R}},\mathcal{D}(\mathfrak{g}_{\mathbb{R}}))$ (see Section \ref{subsec:Rossmann_characters} for a precise definition), then we may define the \emph{Fourier transform} of $\mathcal{M}$ to be
\begin{equation}\label{eq:Fourier_transform}
\langle \mathcal{F}[\mathcal{M}], \mu\rangle:=\int_{\mathcal{M}} \mathcal{F}[\mu] \frac{\omega^{\wedge m}}{(2\pi\sqrt{-1})^m m!}.
\end{equation}
\noindent for every $\mu\in C_c^{\infty}(\mathfrak{g}_{\mathbb{R}},\mathcal{D}(\mathfrak{g}_{\mathbb{R}}))$. This Fourier transform appears to depend on a choice of $\sqrt{-1}$, but we give an orientation on $\mathcal{M}$ which depends on a choice of $\sqrt{-1}$ and makes (\ref{eq:Fourier_transform}) canonically defined (see Section \ref{subsec:Rossmann_characters} for details). We define an \emph{admissible contour} to be a finite linear combination of oriented, analytic submanifolds $\mathcal{M}\subset \Omega$ satisfying the above conditions. Rossmann showed that for every irreducible, admissible representation $\pi$ of $G_{\mathbb{R}}$ with regular infinitesimal character, there exists an admissible contour $\mathcal{C}(\pi)\subset \Omega(\pi)$ satisfying 
$$\mathcal{F}[\mathcal{C}(\pi)]=\theta(\pi).$$
In general, the contour $\mathcal{C}(\pi)$ is not unique. Two contours which are homologous in a suitable sense have identical Fourier transforms (see Section \ref{subsec:Rossmann_characters}).

\begin{theorem} [Rossmann \cite{Ro78}]  If $G_{\mathbb{R}}$ is a real, reductive group, $(\mathcal{O},\Gamma)$ is a semisimple orbital parameter, and $\mathcal{O}\subset \sqrt{-1}\mathfrak{g}_{\mathbb{R}}^*$ is of maximal dimension among coadjoint $G_{\mathbb{R}}$-orbits, then 
\begin{equation}\label{eq:character_tempered}
\mathcal{F}[\mathcal{O}]=\theta(\mathcal{O},\Gamma).
\end{equation}
\end{theorem}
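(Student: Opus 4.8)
The plan is to reduce \eqref{eq:character_tempered} to the compact (or rather regular elliptic) case by running the \emph{same} two‑step reduction on both sides of the identity and matching the outputs term by term. \textbf{Step 1 (identify the representation).} Fix $\lambda\in\mathcal{O}$. Maximal dimension means $\mathfrak{g}_{\mathbb{R}}(\lambda)=\mathfrak{h}_{\mathbb{R}}$ is a Cartan subalgebra; choosing a compatible Cartan involution, write $\mathfrak{h}_{\mathbb{R}}=\mathfrak{t}_{\mathbb{R}}\oplus\mathfrak{a}_{\mathbb{R}}$ and let $L_{\mathbb{R}}=Z_{G_{\mathbb{R}}}(\mathfrak{a}_{\mathbb{R}})=M_{\mathbb{R}}A_{\mathbb{R}}$ be the associated cuspidal Levi, sitting inside a real parabolic $P_{\mathbb{R}}=M_{\mathbb{R}}A_{\mathbb{R}}N_{\mathbb{R}}$. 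Then $\lambda=\lambda_M+\nu$ with $\lambda_M=\lambda|_{\mathfrak{m}_{\mathbb{R}}}$ regular elliptic (regularity of $\lambda_M$ is inherited from $\lambda$, since every $(\mathfrak{m},\mathfrak{t})$‑root is the restriction of a $(\mathfrak{g},\mathfrak{h})$‑root) and $\nu=\lambda|_{\mathfrak{a}_{\mathbb{R}}}\in\sqrt{-1}\mathfrak{a}_{\mathbb{R}}^*$. By Mackey's reduction together with Harish-Chandra's construction \cite{HC65c}, \cite{HC66}, $\pi(\mathcal{O},\Gamma)=\operatorname{Ind}_{P_{\mathbb{R}}}^{G_{\mathbb{R}}}\!\big(\pi_M(\mathcal{O}_M,\Gamma_M)\otimes e^{\nu}\big)$, where $\mathcal{O}_M=M_{\mathbb{R}}\cdot\lambda_M$ is the corresponding maximal‑dimensional (regular elliptic) coadjoint orbit of $M_{\mathbb{R}}$, $\Gamma_M$ is the parameter induced by $\Gamma$ on $\widetilde{M_{\mathbb{R}}}(\lambda_M)$, and $\pi_M(\mathcal{O}_M,\Gamma_M)$ is a discrete series representation of $M_{\mathbb{R}}$.

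\textbf{Step 2 (induction compatibility on both sides).} On the representation side I would invoke Harish-Chandra's formula writing the character of a unitarily parabolically induced representation as an induced invariant eigendistribution: $\theta\big(\operatorname{Ind}_{P_{\mathbb{R}}}^{G_{\mathbb{R}}}\tau\big)=i_{\mathfrak{p}_{\mathbb{R}}}^{\mathfrak{g}_{\mathbb{R}}}\big(\theta(\tau)\big)$, the right‑hand side being obtained by pulling $\theta(\tau)$ back along $\mathfrak{p}_{\mathbb{R}}\to\mathfrak{l}_{\mathbb{R}}=\mathfrak{m}_{\mathbb{R}}\oplus\mathfrak{a}_{\mathbb{R}}$, inserting the appropriate $\mathfrak{n}_{\mathbb{R}}$‑Jacobian, and averaging over $G_{\mathbb{R}}/P_{\mathbb{R}}$. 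On the geometric side I would prove the matching statement $\mathcal{F}[\mathcal{O}]=i_{\mathfrak{p}_{\mathbb{R}}}^{\mathfrak{g}_{\mathbb{R}}}\big(\mathcal{F}[\mathcal{O}_M+\nu]\big)$: using $\mathcal{O}=\operatorname{Ad}^*(G_{\mathbb{R}})\cdot(\mathcal{O}_M+\nu)$, where $\mathcal{O}_M+\nu$ is viewed in $\sqrt{-1}\mathfrak{l}_{\mathbb{R}}^*\hookrightarrow\sqrt{-1}\mathfrak{g}_{\mathbb{R}}^*$ by extension by zero on $\mathfrak{n}_{\mathbb{R}}$, one rewrites \eqref{eq:Fourier_transform} as a fibration over $G_{\mathbb{R}}/P_{\mathbb{R}}$ whose fibre is $\mathcal{O}_M+\nu$ together with the conormal (cotangent) direction, and checks by a Fubini argument that the Kirillov--Kostant--Souriau form $\omega$ on $\mathcal{O}$ decomposes as the KKS form on $\mathcal{O}_M$ plus the canonical symplectic form of $T^*(G_{\mathbb{R}}/P_{\mathbb{R}})$, which yields precisely the Jacobian factor and the averaging built into $i_{\mathfrak{p}_{\mathbb{R}}}^{\mathfrak{g}_{\mathbb{R}}}$. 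Given both induction formulas, \eqref{eq:character_tempered} follows once one knows $\mathcal{F}[\mathcal{O}_M+\nu]=\theta\big(\pi_M(\mathcal{O}_M,\Gamma_M)\otimes e^{\nu}\big)$ on $\mathfrak{l}_{\mathbb{R}}$; since the $\mathfrak{a}_{\mathbb{R}}$‑factor contributes the same scalar $e^{\nu}$ to both sides, this reduces to $\mathcal{F}[\mathcal{O}_M]=\theta\big(\pi_M(\mathcal{O}_M,\Gamma_M)\big)$ for a discrete series of $M_{\mathbb{R}}$.

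\textbf{Step 3 (the regular elliptic case).} Both distributions in question are $M_{\mathbb{R}}$‑invariant, tempered, and eigendistributions for $Z(\mathfrak{m})$ with the same regular infinitesimal character $q(\mathcal{O}_M)$ — for $\mathcal{F}[\mathcal{O}_M]$ the eigendistribution property is immediate because $\mathcal{O}_M$ sits in a single fibre of $q$. By Harish-Chandra's regularity theorem such a distribution is a locally $L^1$ function on $\mathfrak{m}_{\mathbb{R}}$ determined by its restriction to regular points, and, together with temperedness, by its restriction to the compact Cartan $\mathfrak{t}_{\mathbb{R}}$. On the representation side this restriction is Harish-Chandra's Weyl‑type formula for the discrete series: up to the Weyl denominator it is an alternating sum of exponentials over the orbit $W(\mathfrak{m},\mathfrak{t})\cdot\lambda_M$. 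On the geometric side I would compute $\mathcal{F}[\mathcal{O}_M]|_{\mathfrak{t}_{\mathbb{R}}}$ by Harish-Chandra descent and exact stationary phase (Duistermaat--Heckman localization for the moment map of the torus action): the fixed points are the $W(\mathfrak{m},\mathfrak{t})$‑translates of $\lambda_M$, and their localization contributions reassemble, via the Weyl denominator identity, into the same alternating sum — this is Kirillov's character‑formula computation for the compact part of $M_{\mathbb{R}}$, carried out fibrewise over $\mathfrak{t}_{\mathbb{R}}$. Equality of the two restrictions then forces $\mathcal{F}[\mathcal{O}_M]=\theta(\pi_M)$, and unwinding Steps 1--2 gives \eqref{eq:character_tempered}.

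\textbf{Main obstacle.} The delicate point is the geometric induction‑compatibility of Step 2. Because $\mathcal{O}$ is non‑compact, \eqref{eq:Fourier_transform} is only an oscillatory integral over an admissible contour, so one must justify the Fubini decomposition over $G_{\mathbb{R}}/P_{\mathbb{R}}$, verify that the fibred contour still satisfies the boundedness/tempered‑growth hypotheses of Section~\ref{subsec:Rossmann_characters}, and track every normalization — the powers of $2\pi\sqrt{-1}$, the $\sqrt{-1}$‑dependent orientation conventions, the $\mathfrak{n}_{\mathbb{R}}$‑Jacobian, and the $\rho$‑shift implicit in the Duflo double cover — so that the geometric operator $i_{\mathfrak{p}_{\mathbb{R}}}^{\mathfrak{g}_{\mathbb{R}}}$ matches Harish-Chandra's analytic one on the nose. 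Everything else is either classical structure theory of cuspidal parabolics or the known compact‑group localization.
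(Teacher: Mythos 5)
The paper does not prove this theorem --- it is quoted from Rossmann \cite{Ro78}, whose argument the authors describe as a reduction to the case of discrete series found in earlier work of Duflo \cite{Du70}. Your three-step strategy (realize $\pi(\mathcal{O},\Gamma)$ as parabolic induction of a discrete series; match the analytic and geometric induction operators, which is exactly the Lemma on page 377 of \cite{Ro84} that the paper later reuses in Lemma~\ref{lem:elliptic_reduction}; then settle the regular-elliptic case by Harish-Chandra uniqueness plus localization on the compact Cartan) is essentially that same route, with your Step~3 closer in flavor to the later proof of Berline--Vergne \cite{BV83b} than to Rossmann's original analytic treatment of the discrete series case.
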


In this classical case, the Fourier transform reduces to the classical Fourier transform of a $G_{\mathbb{R}}$-invariant density on $\mathcal{O}$. Rossmann's proof uses a reduction to the case of discrete series found in earlier work of Duflo \cite{Du70}. Additional proofs were provided later by Vergne \cite{Ve79} and Berline-Vergne \cite{BV83b}. In the case where $G_{\mathbb{R}}$ is compact, this result was obtained earlier by Harish-Chandra \cite{HC57a} and Kirillov \cite{Kir68}. In some special cases, analogous results have been found for non-reductive Lie groups \cite{Puk67}, \cite{Kir68},  \cite{Puk69}, \cite{Duf70b}, \cite{Kha82}. However, if $(\mathcal{O},\Gamma)$ is a semisimple orbital parameter for a real, reductive group $G_{\mathbb{R}}$ and $\mathcal{O}$ is not of maximal dimension, then $\mathcal{F}[\mathcal{O}]$ is a tempered distribution on $\mathfrak{g}_{\mathbb{R}}$ and $\theta(\mathcal{O},\Gamma)$ is often not a tempered distribution on $\mathfrak{g}_{\mathbb{R}}$; hence, the two cannot be equal in general. This unfortunate fact was observed in the 1980s; the goal of this paper is to find a suitable remedy.
\bigskip

For every semisimple orbital parameter $(\mathcal{O},\Gamma)$ where $\mathcal{O}$ is in the good range, we wish to construct an admissible contour $\mathcal{C}(\mathcal{O},\Gamma)$ for which
$$\mathcal{F}[\mathcal{C}(\mathcal{O},\Gamma)]=\theta(\mathcal{O},\Gamma).$$
We might first attempt to construct an admissible contour $\mathcal{C}(\mathcal{O},\Gamma)$ by creating a fiber bundle
$$\mathcal{C}(\mathcal{O},\Gamma)\rightarrow \mathcal{O}$$
where the fiber over $\lambda\in \mathcal{O}$ is a character formula for the one-dimensional representation $\Gamma_{\lambda}$ of $\widetilde{G_{\mathbb{R}}}(\lambda)$. One checks that in the case where $\mathcal{O}$ is of maximal dimension, this fiber will be a single point  and we recover $\mathcal{C}(\mathcal{O},\Gamma)=\mathcal{O}$ as desired. However, in general it turns out that such a construction leads to a contour that is not admissible in the sense of Rossmann (see Section \ref{subsec:philosophy} for additional explanation). To alleviate this difficulty, we require a polarization of $\mathcal{O}$.

A (complex) \emph{polarization} of $\mathcal{O}$ is a $G_{\mathbb{R}}$-equivariant choice of an integrable, Lagrangian subspace of the complexified tangent spaces $T_{\lambda}\mathcal{O}\otimes_{\mathbb{R}} \mathbb{C}$ for each $\lambda\in \mathcal{O}$. More concretely, it is a choice of complex parabolic subalgebra $\mathfrak{q}_{\lambda}\subset \mathfrak{g}$ with Levi factor $\mathfrak{g}(\lambda)$ for every $\lambda\in \mathcal{O}$ such that
$$\op{Ad}(g)\mathfrak{q}_{\lambda}=\mathfrak{q}_{\op{Ad}^*(g)\lambda}$$
for every $g\in G_{\mathbb{R}}$. There exist finitely many polarizations of any semisimple coadjoint $G_{\mathbb{R}}$-orbit $\mathcal{O}\subset \sqrt{-1}\mathfrak{g}_{\mathbb{R}}^*$. Among all (complex) polarizations, we consider those which are \emph{admissible}, and among admissible polarizations, we consider those which are \emph{maximally real} (See Section \ref{subsec:VZ_construction} for definitions and details). Let us fix a maximally real admissible polarization $\mathfrak{q}=\{\mathfrak{q}_{\lambda}\}_{\lambda\in \mathcal{O}}$.

Next, we fix a maximal compact subgroup $U=G^{\sigma_c}$ for which $\sigma_c\sigma=\sigma\sigma_c$. Then for each $\lambda\in \mathcal{O}$, there exists a unique $\sigma_c$-stable, Levi subalgebra $\mathfrak{l}_{\lambda,\sigma_c}\subset \mathfrak{q}_{\lambda}$. Notice $\mathfrak{l}_{\lambda,\sigma_c}$ is $G$-conjugate to $\mathfrak{g}(\lambda)$, but in most cases it is not equal to $\mathfrak{g}(\lambda)$. If $\mathfrak{h}\subset \mathfrak{l}_{\lambda,\sigma_c}$ is a Cartan subalgebra, $\Delta(\mathfrak{l}_{\lambda,\sigma_c},\mathfrak{h})$ is the collection of roots of $\mathfrak{l}_{\lambda,\sigma_c}$ with respect to $\mathfrak{h}$, and $\Delta^+\subset \Delta(\mathfrak{l}_{\lambda,\sigma_c},\mathfrak{h})$ is a choice of positive roots, then we put
$$\rho_{\mathfrak{l}_{\lambda,\sigma_c}, \mathfrak{h}, \Delta^+}=\frac{1}{2}\sum_{\alpha\in \Delta^+}\alpha\in \mathfrak{h}^*\subset \mathfrak{l}_{\lambda,\sigma_c}^*.$$
\noindent (We utilize the root space decomposition of $\mathfrak{g}$ with respect to $\mathfrak{h}$ to give inclusions $\mathfrak{h}^*\subset \mathfrak{l}^*_{\lambda,\sigma_c}\subset \mathfrak{g}^*$). Let $L_{\lambda,\sigma_c}$ denote the connected subgroup of $G$ with Lie algebra $\mathfrak{l}_{\lambda,\sigma_c}$, put $U_{\lambda}=L_{\lambda,\sigma_c}\cap U$, and put
$$\mathcal{O}_{\rho}^{U_{\lambda}}:=\bigcup_{\substack{\sigma_c(\mathfrak{h})=\mathfrak{h}\\ \Delta^+\subset\ \Delta(\mathfrak{l}_{\lambda,\sigma_c},\mathfrak{h})}} \rho_{\mathfrak{l}_{\lambda,\sigma_c},\mathfrak{h},\Delta^+}$$
where the union is over Cartan subalgebras $\mathfrak{h}\subset \mathfrak{l}_{\lambda,\sigma_c}$ that are $\sigma_c$-stable and choices of positive roots $\Delta^+\subset \Delta(\mathfrak{l}_{\lambda,\sigma_c},\mathfrak{h})$. 
Note that $\mathcal{O}_{\rho}^{U_{\lambda}}$
 is a single $U_{\lambda}$-orbit.
Define
\begin{equation}\label{eq:contour}
\mathcal{C}(\mathcal{O},\Gamma,\mathfrak{q},\sigma_c)=\bigcup_{\lambda\in \mathcal{O}} (\lambda+\mathcal{O}_{\rho}^{U_{\lambda}})\subset \mathfrak{g}^*.
\end{equation}

If $\mathcal{O}$ is in the good range, then there is a natural projection
$$\mathcal{C}(\mathcal{O},\Gamma,\mathfrak{q},\sigma_c)\rightarrow \mathcal{O}$$
and the fibers are compact and uniformly bounded in the real direction, $\mathfrak{g}_{\mathbb{R}}^*$. The fiber over $\lambda\in \mathcal{O}$ is a contour in $\mathfrak{l}_{\lambda,\sigma_c}^*$ that is conjugate via $G$ to a contour in $\mathfrak{g}(\lambda)^*$ whose Fourier transform is $\theta(\Gamma_{\lambda})$, the Lie algebra analogue of the character of $\Gamma_{\lambda}$ (See Section \ref{subsec:philosophy} for further explanation). 

Our main theorem is:
\begin{theorem} \label{thm:main} Let $G$ be a connected, complex reductive algebraic group, and let $(G^{\sigma})_e\subset G_{\mathbb{R}}\subset G^{\sigma}$ be a real form. Let $(\mathcal{O},\Gamma)$ be a semisimple orbital parameter for $G_{\mathbb{R}}$ with $\mathcal{O}$ in the good range, let $\mathfrak{q}=\{\mathfrak{q}_{\lambda}\}_{\lambda\in \mathcal{O}}$ be a maximally real admissible polarization of $\mathcal{O}$, and choose a maximal compact subgroup $U=G^{\sigma_c}\subset G$ such that $\sigma\sigma_c=\sigma_c\sigma$. Let $\theta(\mathcal{O},\Gamma)$ denote the Lie algebra analogue of the character of the irreducible, unitary representation $\pi(\mathcal{O},\Gamma)$. If the contour $\mathcal{C}(\mathcal{O},\Gamma,\mathfrak{q},\sigma_c)\subset \mathfrak{g}^*$ is defined as in (\ref{eq:contour}) and the Fourier transform $\mathcal{F}$ is defined as in (\ref{eq:Fourier_transform}), then we have
\begin{equation}\label{eq:character_formula}
\mathcal{F}[\mathcal{C}(\mathcal{O},\Gamma,\mathfrak{q},\sigma_c)]=\theta(\mathcal{O},\Gamma).
\end{equation}
\end{theorem}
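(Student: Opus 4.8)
The plan is to reduce, in two stages, to situations that are already understood --- Mackey's real parabolic induction and the maximal-dimensional case of Rossmann quoted above --- and to interpolate between them by a geometric character formula for cohomological induction. One first checks that the contour (\ref{eq:contour}) lies in a single regular coadjoint $G$-orbit: since $\lambda$ is central in $\mathfrak{l}_{\lambda,\sigma_c}$ one has $w\cdot(\lambda+\rho_{\mathfrak{l}_{\lambda,\sigma_c},\mathfrak{h},\Delta^+})=\lambda+w\cdot\rho_{\mathfrak{l}_{\lambda,\sigma_c},\mathfrak{h},\Delta^+}$ for $w\in W(\mathfrak{l}_{\lambda,\sigma_c})$, so $q$ is constant on each fiber $\lambda+\mathcal{O}_\rho^{U_\lambda}$, and it is constant along $\mathcal{O}$ by $G_{\mathbb{R}}$-equivariance of $\mathfrak{q}$; in the good range the common value is regular, and comparison of infinitesimal characters identifies it with the geometric infinitesimal character of $\pi(\mathcal{O},\Gamma)$. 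Hence both sides of (\ref{eq:character_formula}) are $G_{\mathbb{R}}$-invariant $Z(\mathfrak{g})$-eigendistributions with the same eigenvalue, and the task is to prove equality.

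\emph{Step 1: reduction to elliptic orbits.} Fix $\lambda\in\mathcal{O}$ and write $\lambda=\lambda_h+\lambda_e$ for its hyperbolic and elliptic parts. The hyperbolic part $\lambda_h$ determines a real parabolic $P_{\mathbb{R}}=M_{\mathbb{R}}N_{\mathbb{R}}$ with $M_{\mathbb{R}}=G_{\mathbb{R}}(\lambda_h)$, and $\mathcal{O}$ is the $G_{\mathbb{R}}$-orbit parabolically induced from the elliptic orbit $\mathcal{O}_M:=M_{\mathbb{R}}\cdot\lambda_e\subset\sqrt{-1}\mathfrak{m}_{\mathbb{R}}^*$; the datum $\Gamma$ restricts to an elliptic orbital parameter $(\mathcal{O}_M,\Gamma_M)$ for $M_{\mathbb{R}}$, the polarization $\mathfrak{q}$ descends to a maximally real admissible polarization $\mathfrak{q}_M$ of $\mathcal{O}_M$, and by Mackey's construction $\pi(\mathcal{O},\Gamma)=\operatorname{Ind}_{P_{\mathbb{R}}}^{G_{\mathbb{R}}}\pi_M(\mathcal{O}_M,\Gamma_M)$ (normalized induction). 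Unwinding (\ref{eq:contour}) together with the equivariance $\operatorname{Ad}(g)\mathfrak{q}_\lambda=\mathfrak{q}_{\operatorname{Ad}^*(g)\lambda}$, one checks that $\mathcal{C}(\mathcal{O},\Gamma,\mathfrak{q},\sigma_c)$ is the parabolically induced contour of $\mathcal{C}(\mathcal{O}_M,\Gamma_M,\mathfrak{q}_M,\sigma_c|_{M_{\mathbb{R}}})$, namely the closure of $\operatorname{Ad}^*(G_{\mathbb{R}})\bigl(\lambda_h+\mathcal{C}(\mathcal{O}_M,\Gamma_M,\mathfrak{q}_M,\sigma_c|_{M_{\mathbb{R}}})\bigr)$, with $\mathfrak{m}^*\hookrightarrow\mathfrak{g}^*$ the annihilator of $\mathfrak{n}_{\mathbb{R}}\oplus\bar{\mathfrak{n}}_{\mathbb{R}}$. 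Invoking the compatibility of Rossmann's Fourier transform with real parabolic induction --- the same reduction used by Duflo and Rossmann to pass to the discrete series --- one obtains $\mathcal{F}[\mathcal{C}(\mathcal{O},\Gamma,\mathfrak{q},\sigma_c)]=\theta\bigl(\operatorname{Ind}_{P_{\mathbb{R}}}^{G_{\mathbb{R}}}\pi_M(\mathcal{O}_M,\Gamma_M)\bigr)=\theta(\mathcal{O},\Gamma)$ once the theorem is known for $M_{\mathbb{R}}$. This reduces the theorem to the case of elliptic $\mathcal{O}$.

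\emph{Step 2: the elliptic case via cohomological induction.} For elliptic $\mathcal{O}$, the Vogan--Zuckerman construction presents $\pi(\mathcal{O},\Gamma)$ as a cohomologically induced module $\mathcal{R}^S_{\mathfrak{q}'}(\Gamma_\lambda)=A_{\mathfrak{q}'}(\Gamma_\lambda)$ from a $\theta$-stable parabolic $\mathfrak{q}'$ whose Levi is $G$-conjugate to $\mathfrak{g}(\lambda)\cong\mathfrak{l}_{\lambda,\sigma_c}$ (the answer being independent of the admissible polarization, while the contour is not). Two inputs must then be combined. First, a base case: after conjugating by $G$ so that the centralizer appears as a $\sigma_c$-stable Levi with maximal compact subgroup $U_\lambda$, the fiber $\lambda+\mathcal{O}_\rho^{U_\lambda}$ of (\ref{eq:contour}) is a contour whose Fourier transform over $\mathfrak{g}_{\mathbb{R}}(\lambda)$ is $\theta(\Gamma_\lambda)$ --- here the union over Cartan subalgebras and positive systems defining $\mathcal{O}_\rho^{U_\lambda}$ is exactly the single $U_\lambda$-orbit through $\rho_{\mathfrak{l}_{\lambda,\sigma_c}}$ appearing in Kirillov's character formula for $U_\lambda$, corrected by the Duflo double cover, and the equality extends to noncompact $\mathfrak{g}_{\mathbb{R}}(\lambda)$ by analytic continuation (the orbit-method character formula in the degenerate case where the orbit is a point). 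Second, a \emph{geometric character formula for cohomological induction}: if $\theta(Z)=\mathcal{F}_{L_{\mathbb{R}}}[\mathcal{C}_L]$ on the Levi then $\theta(\mathcal{R}^S_{\mathfrak{q}'}(Z))=\mathcal{F}[\mathcal{C}]$, where $\mathcal{C}$ is obtained from $\mathcal{C}_L$ by translating by $\rho(\mathfrak{u}(\mathfrak{q}'))$ and spreading it over $\mathcal{O}$; combined with the base case, this $\mathcal{C}$ must be identified with the bundle (\ref{eq:contour}). I would prove the cohomological-induction formula by realizing $\mathcal{R}^S_{\mathfrak{q}'}$ geometrically on the partial flag variety $G/Q'$ --- either via Zuckerman's functor and the known formula for the effect of $\mathcal{R}^S_{\mathfrak{q}'}$ on the restriction of a character to each Cartan subgroup, or via Kashiwara--Schmid localization and the compatibility of the Schmid--Vilonen characteristic-cycle-to-Fourier-transform passage with proper pushforward along $G/Q'\to\mathrm{pt}$ --- and then match contours after tracking the $\rho$-shifts.

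\emph{The main obstacle} is this geometric cohomological-induction formula, together with two subtleties internal to it. The first is the \emph{admissibility} of $\mathcal{C}(\mathcal{O},\Gamma,\mathfrak{q},\sigma_c)$ in Rossmann's sense: the fibers $\mathcal{O}_\rho^{U_\lambda}$ are compact, but they sit over the noncompact base $\mathcal{O}$, so one must establish the tempered-growth estimate in the imaginary direction together with uniform boundedness of the fibers in the real direction $\mathfrak{g}_{\mathbb{R}}^*$; this is exactly where the hypothesis that $\mathfrak{q}$ be \emph{maximally real} is indispensable --- a naive fibering of $\mathcal{O}$ by contours computing $\theta(\Gamma_\lambda)$, or the use of the canonical $\theta$-stable polarization, produces fibers unbounded in $\mathfrak{g}_{\mathbb{R}}^*$ and a contour that is not admissible. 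The second is matching $\mathcal{F}[\mathcal{C}]$ with $\theta(\mathcal{R}^S_{\mathfrak{q}'}(\Gamma_\lambda))$ on the nose: one must track the half-density / $\rho(\mathfrak{u}(\mathfrak{q}'))$-shift introduced both by the Duflo double cover on the orbit side and by the normalization of $\mathcal{R}^S_{\mathfrak{q}'}$ on the representation side, and allow for a possibly singular infinitesimal character on the Levi --- the good-range hypothesis on $\mathcal{O}$ controls regularity only transverse to $\mathfrak{l}_{\lambda,\sigma_c}$ --- which is the reason the robust maximal-dimensional Theorem, rather than the regular-infinitesimal-character contour calculus, must be invoked there.
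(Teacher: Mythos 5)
Your Step 1 is correct and is essentially the paper's Lemma~\ref{lem:elliptic_reduction}: both reduce to elliptic orbits via Mackey's parabolic induction, use the Lemma on page 377 of \cite{Ro84} for compatibility of the Fourier transform with real parabolic induction, and verify that the contour $\mathcal{C}(\mathcal{O},\Gamma,\mathfrak{q},\sigma_c)$ is precisely the parabolically induced one. Your identification of the relevant tools for Step 2 --- the Kashiwara--Schmid correspondence and the Schmid--Vilonen characteristic cycle formula --- is also correct, and these are what the paper uses (via Wong's functor $\operatorname{Ind}_{L_{\mathbb{R}}}^{G_{\mathbb{R}}}$ to identify the object $j_!\mathbb{C}_S\in D^b_{G_{\mathbb{R}},\mathbb{R}\text{-c}}(X)$ with $\pi^{-\omega}(\mathcal{O},\Gamma)$, and then applying (1.8) of \cite{SV98}).

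However, your Step 2 has a genuine gap, which you yourself flag as ``the main obstacle'' but do not resolve. The intermediate ``geometric character formula for cohomological induction'' you posit --- take $\mathcal{C}_L$ on the Levi, translate by $\rho(\mathfrak{u}(\mathfrak{q}'))$, and spread over $\mathcal{O}$ --- is not a known result; it is, in the elliptic case, exactly the content of the theorem you are trying to prove. Neither of the two routes you sketch closes this gap. Route 2 in particular misdescribes the structure of Schmid--Vilonen's theorem: there is no ``compatibility with proper pushforward along $G/Q'\to\mathrm{pt}$'' in play, since the characteristic-cycle formula already lives on $T^*X$ and produces the character of the full representation. What is actually needed --- and what the paper's Lemma~\ref{lem:dlogf} supplies --- is an explicit homotopy, built out of the function $f(x)=h_r(v,v)/h(v,v)$ on the flag variety coming from the Borel--Weil realization of the finite-dimensional representation with highest weight $\lambda+\rho(\mathfrak{n})$, and the open embedding theorem of \cite[Theorem 4.2]{SV96}, which gives a chain $\widetilde{\mathcal{C}}$ with $\partial\widetilde{\mathcal{C}}=[d\log(f|_S)]-\operatorname{CC}(Rj_*\mathbb{C}_S)$. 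One then checks by direct computation that $\mu_\xi(d\log(f|_S))=\mathcal{C}(\mathcal{O},\Gamma,\mathfrak{q},\sigma_c)$ with the correct sign $(-1)^q$, which requires a careful root-space--by--root-space comparison of orientations. Nothing in your proposal substitutes for this computation.

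A second missing ingredient is the non-integral case. Both the Kashiwara--Schmid correspondence as you invoke it (untwisted) and the identification $\operatorname{Ext}^{l+s}(j_!\mathbb{C}_S,\mathcal{O}_X(\lambda+\rho_{\mathfrak{l}}))\simeq\pi^{-\omega}(\mathcal{O},\Gamma)$ require $\lambda+\rho(\mathfrak{n})$ to be integral so that the twisting sheaf $\mathcal{O}_X(\lambda+\rho_{\mathfrak{l}})$ is defined. The paper removes this hypothesis by a coherent continuation argument (Section~\ref{subsec:coherent_continuation}): one exhibits both $\eta\mapsto\mathcal{F}[\mu_\eta(\operatorname{CC}(Rj_*\mathbb{C}_S))]$ and $\eta\mapsto\theta(\mathcal{O}_\eta,\Gamma^\eta)$ as coherent families of invariant eigendistributions on $\mathfrak{g}_{\mathbb{R}}$ --- the latter via Zuckerman's formula for the trivial representation, the transfer theorem (\cite[Proposition 11.128]{KV95}), and the coherence of relative discrete series characters --- and observes that they agree at the integral value $\eta=\rho(\mathfrak{n})$, hence everywhere. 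Your ``analytic continuation'' remark is made in a different context (extending the compact-group Kirillov formula to the noncompact Levi) and does not address this. The base-case observation you make about the fibers $\lambda+\mathcal{O}_\rho^{U_\lambda}$ Fourier transforming to $\theta(\Gamma_\lambda)$ is correct modulo a $G$-conjugation, and indeed appears as motivation in Section~\ref{subsec:philosophy} of the paper, but it is not used in the actual proof.
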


We will discuss the orientation of the contour $\mathcal{C}(\mathcal{O},\Gamma,\mathfrak{q},\sigma_c)$ in Section \ref{subsec:Rossmann_characters}. Fix $\lambda\in \mathcal{O}$ for which there exists a $\sigma$ and $\sigma_c$-stable, fundamental Cartan subalgebra $\mathfrak{h}\subset \mathfrak{l}:=\mathfrak{g}(\lambda)$. Then we may more concretely define the contour 
\begin{equation}\label{eq:contourII}
\mathcal{C}(\mathcal{O},\Gamma,\mathfrak{q},\sigma_c):=\left\{g\cdot \lambda+u\cdot \rho_{\mathfrak{l}}\mid g\in G_{\mathbb{R}},\ u\in U,\ g\cdot \mathfrak{q}_{\lambda}=u\cdot \mathfrak{q}_{\lambda}\right\}
\end{equation}
where $\rho_{\mathfrak{l}}$ is half the sum of the positive roots of $\mathfrak{l}$ with respect to the Cartan subalgebra $\mathfrak{h}\subset \mathfrak{l}$ and a choice of positive roots. We note that different maximally real admissible polarizations of $\mathcal{O}$ may give different character formulas (\ref{eq:character_formula}) which are not obviously equivalent. This is analogous to the fact that different admissible polarizations of $\mathcal{O}$ may give different models of the representation $\pi(\mathcal{O},\Gamma)$ which are not obviously equivalent (See Section \ref{subsec:conjectures} for further discussion).
\bigskip

To every irreducible, admissible representation $\pi$ of $G_{\mathbb{R}}$, one associates an object in each of finitely many twisted, $G_{\mathbb{R}}$-equivariant derived categories of sheaves of complex vector spaces on the flag variety $X$ of $G$. Schmid and Vilonen give geometric character formulas for an arbitrary irreducible, admissible representation $\pi$ of $G_{\mathbb{R}}$ with regular infinitesimal character as the Fourier transform of the pushforward via Rossmann's twisted momentum map of the characteristic cycle of each object on $X$ (see (1.8) on page 4 of \cite{SV98}). When $\pi=\pi(\mathcal{O},\Gamma)$ and $\mathcal{O}$ is of maximal dimension, Schmid and Vilonen's formula \emph{does not} reduce to the classical formula of Rossmann (\ref{eq:character_tempered}). However, their contour may be obtained from the coadjoint orbit $\mathcal{O}$ by a suitable homotopy, and, in fact, Schmid and Vilonen prove their formula in this case by reducing it to the classical formula of Rossmann (\ref{eq:character_tempered}) via this homotopy (see Section 7 of \cite{SV98}). Schmid and Vilonen then prove the general case by showing that the characteristic cycle behaves well with respect to coherent continuation, parabolic induction, and certain Cayley transforms on the flag variety (see Sections 8-10 of \cite{SV98}).

Our proof of (\ref{eq:character_formula}) involves taking a formula of Schmid and Vilonen in the case $\pi=\pi(\mathcal{O},\Gamma)$ where $(\mathcal{O},\Gamma)$ is an arbitrary semisimple orbital parameter with $\mathcal{O}$ in the good range, and performing a homotopy which, in the special case where $\mathcal{O}$ is of maximal dimension, is the inverse of the one used by Schmid and Vilonen to establish their formula.
\bigskip

The reader may ask why we are not content with the more general character formula of Schmid and Vilonen. There are three reasons why we wish to write down the formula (\ref{eq:character_formula}). First, while the character formula of Schmid and Vilonen is natural from the point of view of algebraic analysis on the flag variety, the authors desire a formula that is natural from the point of view of the orbit method. Second, the formula of Schmid and Vilonen involves the characteristic cycle of a certain twisted, $G_{\mathbb{R}}$-equivariant sheaf on the flag variety. In general, this object has a complex combinatorial structure; therefore, it is meaningful to write down an explicit formula in a special case. Third, a recent series of papers \cite{HHO16}, \cite{Har}, \cite{HW} give applications of the classical formula of Rossmann (\ref{eq:character_tempered}) to abstract harmonic analysis questions and branching problems. To generalize these results, one requires a character formula for $\pi(\mathcal{O},\Gamma)$ that has certain analytic properties that are uniform in the parameter $\mathcal{O}$. Our formula (\ref{eq:character_formula}) has this property and is suitable for such applications.
\bigskip

Finally, Barbasch and Vogan proved that the first term in the asymptotic expansion of the character $\theta(\mathcal{O},\Gamma)$ is the same (up to scaling) as the first order term in the asymptotic expansion of the Fourier transform of $\mathcal{O}$ \cite{BV83}. Our formula implies this classical result and may be viewed as a further exploration of the relationship between $\theta(\mathcal{O},\Gamma)$ and $\mathcal{O}$ first observed by Barbasch and Vogan.
\bigskip

\noindent \textbf{Acknowledgements:} B. Harris would like to thank D. Vogan for introducing him to the orbit method several years ago when B. Harris was a young graduate student at MIT. The more he learns, the more B. Harris appreciates this wondrous gift. B. Harris would like to thank M. Duflo and H. He for several helpful conversations about the orbit method and character theory. In addition, B. Harris would like to thank L. Barchini, R. Zierau, and P. Trapa for helpful conversations about the deep and technical work of Schmid-Vilonen. B. Harris was supported by an AMS-Simons travel grant while this research was conducted.
Y. Oshima is supported by JSPS KAKENHI Grant No.\ 16K17562.  

\section{Semisimple Orbital Parameters and Representations of Real, Reductive Groups}
\label{sec:representations}

In this section, we define the Duflo double cover, completing the definition of a semisimple orbital parameter $(\mathcal{O},\Gamma)$, we recall the Vogan-Zuckerman construction of the Harish-Chandra module of $\pi(\mathcal{O},\Gamma)$, and we recall the Schmid-Wong construction of the maximal globalization of $\pi(\mathcal{O},\Gamma)$. This section provides notation and recalls key concepts to be used in the sequel.

\subsection{Duflo Double Covers}
\label{subsec:double_covers}

If $\mathcal{O}\subset \sqrt{-1}\mathfrak{g}_{\mathbb{R}}^*$ is a coadjoint orbit for $G_{\mathbb{R}}$, then the Kirillov-Kostant symplectic form on $\mathcal{O}$ is defined by 
\begin{equation}\label{eq:Kirillov-Kostant_form}
\omega_{\lambda}(\op{ad}^*(X)\lambda,\op{ad}^*(Y)\lambda)=\lambda([X,Y])
\end{equation}
for each $\lambda\in \mathcal{O}$. This action of $G_{\mathbb{R}}$ on $\mathcal{O}$ preserves $\omega$ (see for instance Chapter 1 of \cite{Kir04}). In particular, for any $\lambda\in \mathcal{O}$, we may form the stabilizer $G_{\mathbb{R}}(\lambda)$ of $\lambda$, and this stabilizer acts on the tangent space $T_{\lambda}\mathcal{O}$ of $\mathcal{O}$ and preserves the symplectic form on this vector space. In particular, we obtain a map
$$\psi\colon G_{\mathbb{R}}(\lambda)\rightarrow \op{Sp}(T_{\lambda}\mathcal{O},\omega_{\lambda})$$
for each $\lambda\in \mathcal{O}$ where $\op{Sp}(T_{\lambda}\mathcal{O},\omega_{\lambda})$ denotes the symplectic group of linear automorphisms of $T_{\lambda}\mathcal{O}$ that preserve $\omega_{\lambda}$. 

Recall that there exists a unique two fold cover 
$$\varpi\colon \op{Mp}(T_{\lambda}\mathcal{O},\omega_{\lambda})\rightarrow \op{Sp}(T_{\lambda}\mathcal{O},\omega_{\lambda})$$ called the metaplectic group. The existence and uniqueness of the double cover may be seen by checking that the fundamental group of the symplectic group is $\mathbb{Z}$ and therefore has a unique index two subgroup (see for instance page 173 of \cite{Fo89} for an exposition). However, for many applications it is necessary to have an explicit construction of the metaplectic group. One can use the Segal-Shale-Weil projective representation of the symplectic group or the Maslov index construction of Lion (see for instance \cite{LV80} for an exposition).

We form the \emph{Duflo double cover} of $G_{\mathbb{R}}(\lambda)$ by
\begin{equation}\label{eq:Duflo_double_cover}
\widetilde{G_{\mathbb{R}}}(\lambda):=\left\{(g,s)\in G_{\mathbb{R}}(\lambda)\times \op{Mp}(T_{\lambda}\mathcal{O},\omega_{\lambda})\Bigl\lvert \psi(g)=\varpi(s)\right\}.
\end{equation}
It was first introduced on page 106 of \cite{Duf72}. This definition completes the definition of a semisimple orbital parameter (Definition \ref{def:semisimple_orbital_parameter}).

Let $\mathcal{O}\subset \sqrt{-1}\mathfrak{g}_{\mathbb{R}}^*$ be a semisimple coadjoint orbit, fix $\lambda\in \mathcal{O}$, and let $L:=G(\lambda)$ (resp.\ $L_{\mathbb{R}}:=G_{\mathbb{R}}(\lambda))$ be the stabilizer of $\lambda$ in $G$ (resp.\ $G_{\mathbb{R}}$) with Lie algebra $\mathfrak{l}:=\mathfrak{g}(\lambda)$ (resp.\ $\mathfrak{l}_{\mathbb{R}}:=\mathfrak{g}_{\mathbb{R}}(\lambda))$. Fix a fundamental (that is, maximally compact) Cartan subalgebra $\mathfrak{h}_{\mathbb{R}}\subset \mathfrak{l}_{\mathbb{R}}$, and decompose $\mathfrak{g}$ into root spaces for $\mathfrak{h}:=\mathfrak{h}_{\mathbb{R}}\otimes_{\mathbb{R}}\mathbb{C}$
$$\mathfrak{g}=\mathfrak{h}\oplus\sum_{\alpha\in \Delta(\mathfrak{g},\mathfrak{h})} \mathfrak{g}_{\alpha}.$$
If $\mathfrak{q}_{\lambda}=\mathfrak{l}\oplus \mathfrak{n}$ is a parabolic subalgebra of $\mathfrak{g}$ with Levi factor $\mathfrak{l}$, then we may write 
$$\mathfrak{n}=\sum_{\alpha\in \Delta(\mathfrak{n},\mathfrak{h})}\mathfrak{g}_{\alpha}$$
as a sum of root spaces of $\mathfrak{g}$ with respect to $\mathfrak{h}$. If we define $\mathfrak{q}_{\op{Ad}^*(g) \lambda}:=\op{Ad}(g) \mathfrak{q}_{\lambda}$, then $\mathfrak{q}=\{\mathfrak{q}_{\lambda}\}_{\lambda\in \mathcal{O}}$ is a (complex) polarization of $\mathcal{O}$. In particular, giving a (complex) polarization of $\mathcal{O}$ is equivalent to giving a parabolic subalgebra $\mathfrak{q}_{\lambda}\subset \mathfrak{g}$ with Levi factor $\mathfrak{l}=\mathfrak{g}(\lambda)$. 

Fix a polarization $\mathfrak{q}=\{\mathfrak{q}_{\lambda}\}_{\lambda\in \mathcal{O}}$ of $\mathcal{O}$ with Levi decomposition $\mathfrak{q}_{\lambda}=\mathfrak{l}\oplus \mathfrak{n}$. Define 
\begin{equation}\label{eq:def_2rho}
2\rho(\mathfrak{n})=\sum_{\alpha\in \Delta(\mathfrak{n},\mathfrak{h})} \alpha\in \mathfrak{h}^*.
\end{equation}
Define $H:=Z_G(\mathfrak{h})\subset G$ to be the Cartan subgroup of $G$ with Lie algebra $\mathfrak{h}$, and write $e^{2\rho(\mathfrak{n})}$ for the character of $H$ whose differential is $2\rho(\mathfrak{n})$. One can also define this character as the scalar by which $H$ acts on $\bigwedge^{\text{top}}\mathfrak{n}$. Now, if $Q_{\lambda}:=N_G(\mathfrak{q}_{\lambda})$ is the normalizer of $\mathfrak{q}_{\lambda}$ in $G$, then the parabolic subgroup $Q_{\lambda}$ has Levi decomposition $Q_{\lambda}=LN$ with $N=\exp(\mathfrak{n})$. Since $N$ is the nilradical of $Q_{\lambda}$, $L$ acts on $N$ by conjugation, and $L$ acts on $\bigwedge^{\text{top}}\mathfrak{n}$. In particular, $e^{2\rho(\mathfrak{n})}$ extends to a character of $L$ and then may be restricted to a character of $L_{\mathbb{R}}:=G_{\mathbb{R}}(\lambda)$.

Define the $\rho(\mathfrak{n})$ double cover of $G_{\mathbb{R}}(\lambda)$ to be 
\begin{equation}\label{eq:rho_n_double_cover}
\widetilde{G_{\mathbb{R}}}(\lambda)^{\rho(\mathfrak{n})}=\{(g,z)\in G_{\mathbb{R}}(\lambda)\times \mathbb{C}\mid e^{2\rho(\mathfrak{n})}(g)=z^2\}.
\end{equation} 

Now, if $W\subset T_{\lambda}\mathcal{O}\otimes_{\mathbb{R}}\mathbb{C}\subset \mathfrak{g}^*$ is a complex Lagrangian subspace, denote by $\op{Sp}(T_{\lambda}\mathcal{O},\omega_{\lambda})_W$ the stabilizer of $W$ in $\op{Sp}(T_{\lambda}\mathcal{O},\omega_{\lambda})$, and denote by $\op{Mp}(T_{\lambda}\mathcal{O},\omega_{\lambda})_W$ the inverse image of $\op{Sp}(T_{\lambda}\mathcal{O},\omega_{\lambda})_W$ in the metaplectic group $\op{Mp}(T_{\lambda}\mathcal{O},\omega_{\lambda})$. In Sections 1.6-1.9 of \cite{Du82a}, Duflo defines a one-dimensional representation of $\op{Mp}(T_{\lambda}\mathcal{O},\omega_{\lambda})_W$, which we will denote by $e^{\rho_W}$, for each complex Lagrangian subspace $W$. We note that such a space is given by
$$W:=\{\op{ad}^*(X)\lambda \mid X\in \mathfrak{n}\}.$$
Since $G_{\mathbb{R}}(\lambda)$ stabilizes $W$, by (\ref{eq:Duflo_double_cover}), if $(g,s)\in \widetilde{G_{\mathbb{R}}}(\lambda)$, then $s\in \op{Mp}(T_{\lambda}\mathcal{O},\omega_{\lambda})_W$ and $e^{\rho_W}(s)$ is well-defined.  Duflo shows that if $s\in \op{Sp}(T_{\lambda}\mathcal{O},\omega_{\lambda})_W$, then (see (28) on page 149 of \cite{Du82a})
\begin{equation}\label{eq:duflo_equation}
e^{2\rho_W}(s)=\det(s|_W).
\end{equation}
Now, (\ref{eq:def_2rho}) and (\ref{eq:duflo_equation}) together imply $e^{2\rho_W}(s)=e^{2\rho(\mathfrak{n})}(g)$ whenever $\psi(g)=\varpi(s)$. Therefore, viewing (\ref{eq:Duflo_double_cover}) and (\ref{eq:rho_n_double_cover}), there is a natural map
$$\varphi\colon \widetilde{G_{\mathbb{R}}}(\lambda)\longrightarrow \widetilde{G_{\mathbb{R}}}(\lambda)^{\rho(\mathfrak{n})}$$
by
$$(g,s)\mapsto (g,e^{\rho_W}(s)).$$

Now, there exists a unique element $\epsilon\in \op{Mp}(T_{\lambda}\mathcal{O},\omega_{\lambda})$ with $\epsilon\neq e$ not the identity in $\op{Mp}(T_{\lambda}\mathcal{O},\omega_{\lambda})$ but $\varpi(\epsilon)=e$ the identity in $\op{Sp}(T_{\lambda}\mathcal{O},\omega_{\lambda})$. Duflo proves that $e^{\rho_W}(\epsilon)=-1$ (see (28) on page 149 of \cite{Du82a}), and since $\op{ker}\varpi=\{1,\epsilon\}$, we deduce that $\varphi$ is injective. Since this group homomorphism commutes with projection onto $G_{\mathbb{R}}(\lambda)$, $\varphi$ must be surjective as well. We deduce 
\begin{equation}\label{eq:double_cover}
\widetilde{G_{\mathbb{R}}}(\lambda)\simeq \widetilde{G_{\mathbb{R}}}(\lambda)^{\rho(\mathfrak{n})}
\end{equation}
and the Duflo double cover $\widetilde{G_{\mathbb{R}}}(\lambda)$ is isomorphic to the $\rho(\mathfrak{n})$ double cover of $G_{\mathbb{R}}(\lambda)$.

\subsection{Maximally Real Admissible Polarizations}
\label{subsec:parabolics}

Let $(\mathcal{O},\Gamma)$ be a semisimple orbital parameter for $G_{\mathbb{R}}$ with $(G^{\sigma})_e\subset G_{\mathbb{R}}\subset G^{\sigma}$, and fix $\lambda\in \mathcal{O}$. Let $\mathfrak{h}_{\mathbb{R}}\subset \mathfrak{g}_{\mathbb{R}}(\lambda)$ be a fundamental (maximally compact) Cartan subalgebra with complexification $\mathfrak{h}$, and let $H\subset L$ be the corresponding $\sigma$-stable Cartan subgroup. If $\mathfrak{q}=\{\mathfrak{q}_{\lambda}\}_{\lambda\in \mathcal{O}}$ is a (complex) polarization of $\mathcal{O}$, let $\mathfrak{q}_{\lambda}=\mathfrak{l}\oplus \mathfrak{n}$ be the Levi decomposition of $\mathfrak{q}_{\lambda}$ with Levi factor $\mathfrak{l}=\mathfrak{g}(\lambda)$. Whenever $\alpha\in \Delta(\mathfrak{g},\mathfrak{h})\subset \mathfrak{h}^*$, denote by $\alpha^{\vee}\in \mathfrak{h}$ the corresponding coroot. We say $\mathfrak{q}_{\lambda}$ is \emph{admissible} if 
\[\langle \lambda, \alpha^{\vee}\rangle\in \mathbb{R}_{>0}\implies \alpha\in \Delta(\mathfrak{n},\mathfrak{h}).\] Let $\sigma$ denote complex conjugation of $\mathfrak{g}$ with respect to $\mathfrak{g}_{\mathbb{R}}$. An admissible polarization is \emph{maximally real} if $\sigma(\mathfrak{q}_{\lambda})\cap \mathfrak{q}_{\lambda}$ is of maximal dimension among all admissible polarizations. 

Let us construct an example of a maximally real admissible polarization. Fix a square root of $-1$ and call it $i$. If $\gamma=a+ib$ is a complex number, define $\op{Re}\gamma:=a$ to be the real part of $\gamma$ and $\op{Im}\gamma:=b$ to be the imaginary part of $\gamma$. Define $\Delta(\mathfrak{n},\mathfrak{h})$ to be the collection of roots $\alpha\in \Delta(\mathfrak{g},\mathfrak{h})$ satisfying
$$\op{Im}\langle \lambda, \alpha^{\vee}\rangle>0$$
or
$$\op{Im}\langle \lambda,\alpha^{\vee}\rangle=0\ \text{and}\ \op{Re}\langle \lambda,\alpha^{\vee}\rangle>0.$$
Define
$$
\mathfrak{n}=\sum_{\alpha\in \Delta(\mathfrak{n},\mathfrak{h})}\mathfrak{g}_{\alpha},\quad \mathfrak{q}_{\lambda}=\mathfrak{l}\oplus \mathfrak{n}.
$$
Observe that $\mathfrak{q}_{\lambda}\subset \mathfrak{g}$ is a parabolic subalgebra of $\mathfrak{g}$ depending on the choice of $\lambda\in \mathcal{O}$. Notice $\mathfrak{q}=\{\mathfrak{q}_{\lambda}\}_{\lambda\in \mathcal{O}}$ defines an admissible polarization of $\mathcal{O}$. One checks 
$$\sigma(\mathfrak{q}_{\lambda})\cap \mathfrak{q}_{\lambda}=\mathfrak{l}\oplus \sum_{\substack{\alpha\in \Delta(\mathfrak{n},\mathfrak{h})\\ \op{Im}\langle \lambda, \alpha^{\vee}\rangle>0}}\mathfrak{g}_{\alpha};$$
therefore,
\begin{equation}\label{eq:maximally_real1}
\dim(\sigma(\mathfrak{q}_{\lambda})\cap \mathfrak{q}_{\lambda})=\dim \mathfrak{q}_{\lambda}-\#\{\alpha\in \Delta(\mathfrak{g},\mathfrak{h}) \mid \op{Im}\langle \lambda,\alpha^{\vee}\rangle=0\ \& \op{Re}\langle \lambda,\alpha^{\vee}\rangle> 0\}.
\end{equation}
Since $\sigma(\mathfrak{g}_{\alpha})=\mathfrak{g}_{\sigma(\alpha)}$ and $\langle \lambda,\sigma(\alpha)^{\vee}\rangle<0$ whenever $\langle\lambda, \alpha^{\vee} \rangle>0$, we deduce
\begin{equation}\label{eq:maximally_real2}
\dim(\sigma(\widetilde{\mathfrak{q}}_{\lambda})\cap \widetilde{\mathfrak{q}}_{\lambda})\leq \dim \widetilde{\mathfrak{q}}_{\lambda}-\#\{\alpha\in \Delta(\mathfrak{g},\mathfrak{h})\mid
 \langle \lambda,\alpha^{\vee}\rangle\in \mathbb{R}_{>0}\}
\end{equation}
whenever $\widetilde{\mathfrak{q}}_{\lambda}$ defines an admissible polarization. In particular, the admissible polarization $\mathfrak{q}=\{\mathfrak{q}_{\lambda}\}$ defined above is maximally real among all admissible polarizations.

Finally, viewing (\ref{eq:maximally_real1}) and (\ref{eq:maximally_real2}), we note that an admissible polarization $\mathfrak{q}=\mathfrak{l}\oplus \mathfrak{n}$ is maximally real among admissible polarizations if, and only if 
\begin{equation}\label{eq:maximally_real_criterion}
\alpha\in \Delta(\mathfrak{n},\mathfrak{h}) \implies \sigma\alpha\in \Delta(\mathfrak{n},\mathfrak{h})\ \text{or}\ \langle \lambda,\alpha^{\vee}\rangle \in \mathbb{R}_{>0}.
\end{equation}
\bigskip

Given a semisimple orbital parameter $(\mathcal{O},\Gamma)$ and a maximally real admissible polarization $\mathfrak{q}=\{\mathfrak{q}_{\lambda}\}$, fix $\lambda\in\mathcal{O}$ and 
fix a maximal compact subgroup $U:=G^{\sigma_c}\subset G$ defined by an anti-holomorphic involution $\sigma_c$ of $G$ that commutes with $\sigma$ and fixes $L=G(\lambda)$ and $H$. 
Then we wish to define a unitary representation $\pi=\pi(\mathcal{O},\Gamma,\mathfrak{q},\sigma_c)$ of $G_{\mathbb{R}}$ depending on the datum $(\mathcal{O},\Gamma,\mathfrak{q},\sigma_c)$. It will be easy to see that the isomorphism class of $\pi$ does not depend on $\sigma_c$. Later, we will check that it is also independent of the choice of $\mathfrak{q}$ (See Section \ref{sec:remarks}). Therefore, one may denote this representation simply as $\pi(\mathcal{O},\Gamma)$. In order to give a construction, we must utilize the datum $(\mathcal{O},\Gamma,\mathfrak{q},\sigma_c)$ to construct additional objects. We finish this section by constructing these objects. 

Define $\theta:=\sigma\sigma_c$, let $K:=G^{\theta}$, and let $K_{\mathbb{R}}:=K^{\sigma}$. The holomorphic involution $\theta$ commuting with $\sigma$ and $\sigma_c$ is called a \emph{Cartan involution}. Decompose $\mathfrak{h}=\mathfrak{h}^{\theta}\oplus \mathfrak{h}^{-\theta}$ into $+1$ and $-1$ eigenspaces of $\theta$, and let 
$$\mathfrak{h}^*=(\mathfrak{h}^{\theta})^*\oplus (\mathfrak{h}^{-\theta})^*$$
be the dual decomposition. That is, let $(\mathfrak{h}^{\theta})^*$ be the collection of linear functionals on $\mathfrak{h}$ vanishing on $\mathfrak{h}^{-\theta}$ and let $(\mathfrak{h}^{-\theta})^*$ be the collection of linear functionals on $\mathfrak{h}$ vanishing on $\mathfrak{h}^{\theta}$. We may decompose $\lambda\in \sqrt{-1}\mathfrak{h}_{\mathbb{R}}^*\subset \sqrt{-1}\mathfrak{g}_{\mathbb{R}}^*$ as $\lambda_{c}+\lambda_{n}$ in a unique way with $\lambda_{c}\in (\mathfrak{h}^{\theta})^*$ and $\lambda_{n}\in (\mathfrak{h}^{-\theta})^*$. Since $\lambda\in \sqrt{-1}\mathfrak{g}_{\mathbb{R}}^*$ and $\theta$ commutes with $\sigma$, we observe that $\lambda_{c}$ and $\lambda_{n}$ are contained in $\sqrt{-1}(\mathfrak{h}_{\mathbb{R}}^{\theta})^*$ and $\sqrt{-1}(\mathfrak{h}_{\mathbb{R}}^{-\theta})^*$ respectively. 

Let $G(\lambda_n)$ be the stabilizer of $\lambda_n$ in $G$
 with Lie algebra $\mathfrak{g}(\lambda_n)$.
Further, define $\Delta(\mathfrak{n}_{\mathfrak{p}},\mathfrak{h})\subset \Delta(\mathfrak{n},\mathfrak{h})$ to be the collection of roots $\alpha$ of $\mathfrak{n}$ with respect to $\mathfrak{h}$ for which $\langle \lambda_n,\alpha^{\vee}\rangle\neq 0$ (or equivalently $\langle \lambda,\alpha^{\vee}\rangle\notin \mathbb{R}$). In particular, using (\ref{eq:maximally_real_criterion}), we deduce 
\begin{equation}\label{eq:sigma_stable}
\alpha\in \Delta(\mathfrak{n}_{\mathfrak{p}},\mathfrak{h})\Longleftrightarrow \sigma\alpha\in \Delta(\mathfrak{n}_{\mathfrak{p}},\mathfrak{h}).
\end{equation}
Define
\[\mathfrak{n}_{\mathfrak{p}}=\sum_{\alpha\in \Delta(\mathfrak{n}_{\mathfrak{p}},\mathfrak{h})}\mathfrak{g}_{\alpha},\qquad 
\mathfrak{p}=\mathfrak{g}(\lambda_n)\oplus \mathfrak{n}_{\mathfrak{p}}.\]
We claim $\mathfrak{p}\subset \mathfrak{g}$ is a subalgebra. 
The statement
 $[\mathfrak{n}_{\mathfrak{p}},\mathfrak{n}_{\mathfrak{p}}]
 \subset \mathfrak{g}(\lambda_n)+\mathfrak{n}_{\mathfrak{p}}$
 follows directly from the definitions. 
To check
 $[\mathfrak{g}(\lambda_n),\mathfrak{n}_{\mathfrak{p}}]
 \subset \mathfrak{n}_{\mathfrak{p}}$,
 choose $\alpha\in \Delta(\mathfrak{n}_{\mathfrak{p}},\mathfrak{h})$
 and $\beta\in \Delta(\mathfrak{g}(\lambda_n),\mathfrak{h})$
 such that $\alpha+\beta\in \Delta(\mathfrak{g},\mathfrak{h})$
 and note $\langle \beta^{\vee}, \lambda_n\rangle=0$,
 $\langle \alpha^{\vee}, \lambda_n\rangle\neq 0$ together imply
\begin{equation}\label{eq:non_zero}
\langle (\alpha+\beta)^{\vee}, \lambda_n\rangle\neq 0.
\end{equation}
Further, if $\beta\in \Delta(\mathfrak{n},\mathfrak{h})$
 or $\beta\in \Delta(\mathfrak{l},\mathfrak{h})$,
 then $\alpha+\beta\in \Delta(\mathfrak{n},\mathfrak{h})$
 and combining with (\ref{eq:non_zero}),
 we obtain that
 $\alpha+\beta\in \Delta(\mathfrak{n}_{\mathfrak{p}},\mathfrak{h})$. 
Alternately, if $\beta\notin \Delta(\mathfrak{n},\mathfrak{h})
 \cup \Delta(\mathfrak{l},\mathfrak{h})$,
 then $-\beta\in \Delta(\mathfrak{n}\cap\mathfrak{g}(\lambda_n),\mathfrak{h})$;  therefore, $\langle \beta^{\vee},\lambda\rangle<0$
 which implies $\langle \sigma(\beta)^{\vee},\lambda \rangle>0$
 and we deduce
 $\sigma(\beta)\in
 \Delta(\mathfrak{n}\cap\mathfrak{g}(\lambda_n),\mathfrak{h})$. 
By repeating the previous argument, we conclude 
\[\sigma(\alpha+\beta)=\sigma(\alpha)+\sigma(\beta)
 \in \Delta(\mathfrak{n}_{\mathfrak{p}},\mathfrak{h}).\]
A second use of (\ref{eq:sigma_stable}) yields
 $\alpha+\beta\in \Delta(\mathfrak{n}_{\mathfrak{p}},\mathfrak{h})$. 
We conclude $\mathfrak{p}\subset \mathfrak{g}$ is a subalgebra;
 it is a parabolic subalgebra
 since $\mathfrak{p}\supset \mathfrak{q}_{\lambda}$. 
Moreover, by (\ref{eq:sigma_stable}) $\mathfrak{p}$ is $\sigma$-stable
 with real points $\mathfrak{p}_{\mathbb{R}}$. 

Let $P=N_G(\mathfrak{p})$
 (resp.\ $P_{\mathbb{R}}=N_{G_{\mathbb{R}}}(\mathfrak{p}_{\mathbb{R}})$)
 denote the normalizer of $\mathfrak{p}$ in $G$
 (resp.\ $\mathfrak{p}_{\mathbb{R}}$ in $G_{\mathbb{R}}$). 
Let $P_{\mathbb{R}}=M_{\mathbb{R}}A_{\mathbb{R}}(N_{P})_{\mathbb{R}}$ and
 $\mathfrak{p}_{\mathbb{R}}
 =\mathfrak{m}_{\mathbb{R}}\oplus \mathfrak{a}_{\mathbb{R}}
 \oplus (\mathfrak{n}_{\mathfrak{p}})_{\mathbb{R}}$
 be the Langlands decomposition, where
 $\mathfrak{m}_{\mathbb{R}} \oplus \mathfrak{a}_{\mathbb{R}}
 = \mathfrak{g}_{\mathbb{R}}(\lambda_n)$.
Define $M$ to be the intersection of preimages of $\{\pm 1\}$ for
 all algebraic characters $\chi: G(\lambda_n)\to \mathbb{C}^*$
 defined over $\mathbb{R}$ (namely, $\chi(\sigma(g))=\overline{\chi(g)}$).
Then $M$ is an algebraic subgroup of $G$ and
 $M_{\mathbb{R}}=G_{\mathbb{R}}\cap M$.

Observe $M$ is $\sigma$-stable, $\theta$-stable, and $\sigma_c$-stable. 
Note $\lambda_c\in \sqrt{-1}\mathfrak{m}_{\mathbb{R}}^*$, and note 
\[\mathcal{O}^{M_{\mathbb{R}}}
 :=\mathcal{O}^{M_{\mathbb{R}}}_{\lambda_c}
 =M_{\mathbb{R}}\cdot \lambda_c\subset \sqrt{-1}\mathfrak{m}_{\mathbb{R}}^*\]
 is a semisimple coadjoint orbit for $M_{\mathbb{R}}$. 
Let $\Delta(\mathfrak{m},\mathfrak{h}\cap \mathfrak{m})$
 denote the collection of roots
 of $\mathfrak{m}$ with respect to $\mathfrak{h}\cap \mathfrak{m}$, and write
\[\mathfrak{m}=(\mathfrak{h}\cap\mathfrak{m})\oplus
 \sum_{\alpha\in \Delta(\mathfrak{m},\mathfrak{h}\cap \mathfrak{m})}
 \mathfrak{g}_{\alpha}.\]
We define $\Delta(\mathfrak{n}_{\mathfrak{m}},\mathfrak{h}\cap \mathfrak{m})$
 to be the collection of
 $\alpha\in \Delta(\mathfrak{m},\mathfrak{h}\cap \mathfrak{m})$
 for which
\[\langle \lambda_{c},\alpha^{\vee}\rangle>0.\]
Then
\begin{equation}\label{eq:imaginary_parabolic}
\mathfrak{q}_{\mathfrak{m}}
=(\mathfrak{l}\cap\mathfrak{m})\oplus \mathfrak{n}_{\mathfrak{m}}
\end{equation}
is a $\theta$-stable parabolic subalgebra of $\mathfrak{m}$.

Note $\Gamma_{\lambda}$ is a genuine, one-dimensional, unitary representation of $\widetilde{G_{\mathbb{R}}}(\lambda)$. 
Moreover, $\widetilde{G_{\mathbb{R}}}(\lambda)\simeq \widetilde{L_{\mathbb{R}}}^{\rho(\mathfrak{n})}$ by (\ref{eq:double_cover}). 
We need to construct a genuine, one-dimensional,
 unitary representation $\Gamma^{M_{\mathbb{R}}}_{\lambda_c}$
 of $\widetilde{M_{\mathbb{R}}}(\lambda_c)$.  Define
\[2\rho(\mathfrak{n}_{\mathfrak{m}}):=
\sum_{\alpha\in \Delta(\mathfrak{n}_{\mathfrak{m}},
 \mathfrak{h}\cap \mathfrak{m})} \alpha,
\qquad 2\rho(\mathfrak{n}_{\mathfrak{p}})
:=\sum_{\alpha\in \Delta(\mathfrak{n}_{\mathfrak{p}},\mathfrak{a})}\alpha.\]
Again by (\ref{eq:double_cover}),
 $\widetilde{M_{\mathbb{R}}}(\lambda_c)\simeq
 \widetilde{(L_{\mathbb{R}}\cap M_{\mathbb{R}})}^{
  \rho(\mathfrak{n}_{\mathfrak{m}})}$.
Observe 
\[2\rho(\mathfrak{n})
=2\rho(\mathfrak{n}_{\mathfrak{m}})+2\rho(\mathfrak{n}_{\mathfrak{p}}),\]
and note $e^{2\rho(\mathfrak{n}_{\mathfrak{p}})}$
 is a character of $L_{\mathbb{R}}$ taking values in $\mathbb{R}^{\times}$
 since $\mathfrak{p}$ is $\sigma$-stable. 
Thus, if $z\in \mathbb{C}$ and
 $z^2=e^{2\rho(\mathfrak{n}_{\mathfrak{p}})}(g)\in \mathbb{R}^{\times}$
 for some $g\in L_{\mathbb{R}}$, then $z/|z|\in \{\pm 1, \pm \sqrt{-1}\}$. 
Now, if $\Gamma_{\lambda}$ is a one-dimensional, genuine,
 unitary representation of $\widetilde{L_{\mathbb{R}}}^{\rho(\mathfrak{n})}$,
 then we define
\begin{equation}\label{eq:character_transfer}
\Gamma^{M_{\mathbb{R}}}_{\lambda_c}(g,z)
 :=\frac{zz_1^{-1}}{|zz_1^{-1}|}\Gamma_{\lambda}(g,z_1)
\end{equation}
for $(g,z_1)\in \widetilde{L_{\mathbb{R}}}^{\rho(\mathfrak{n})}$. 
One notes that (\ref{eq:character_transfer}) is independent of the choice of
 $z_1$ so long as $(g,z_1)\in \widetilde{L_{\mathbb{R}}}^{\rho(\mathfrak{n})}$. 
Therefore, $\Gamma^{M_{\mathbb{R}}}_{\lambda_c}$ is a well-defined genuine,
 one-dimensional, unitary representation of
 $\widetilde{(L_{\mathbb{R}}\cap M_{\mathbb{R}})}^{
 \rho(\mathfrak{n}_{\mathfrak{m}})}$. 
One may translate $\Gamma_{\lambda_c}^{M_{\mathbb{R}}}$ by elements of
 $M_{\mathbb{R}}$ to create a semisimple orbital parameter
 $(\mathcal{O}^{M_{\mathbb{R}}},\Gamma^{M_{\mathbb{R}}})$ which satisfies
 parts (a) and (b) of Definition \ref{def:semisimple_orbital_parameter}.

An element $\lambda\in \sqrt{-1}\mathfrak{g}_{\mathbb{R}}^*$
 is \emph{elliptic} if there exists a Cartan involution $\theta$
 for which $\theta(\lambda)=\lambda$. 
A coadjoint orbit $\mathcal{O}\subset \sqrt{-1}\mathfrak{g}_{\mathbb{R}}^*$
 is \emph{elliptic} if it consists of elliptic elements. 
A semisimple orbital parameter $(\mathcal{O},\Gamma)$ for $G_{\mathbb{R}}$ is
 called an \emph{elliptic orbital parameter}
 if $\mathcal{O}\subset \sqrt{-1}\mathfrak{g}_{\mathbb{R}}^*$
 is an elliptic coadjoint orbit. 
If $(\mathcal{O},\Gamma)$ is a semisimple orbital parameter for
 $G_{\mathbb{R}}$, one observes
 $(\mathcal{O}^{M_{\mathbb{R}}}, \Gamma^{M_{\mathbb{R}}})$
 is an elliptic orbital parameter for $M_{\mathbb{R}}$.

\subsection{The Vogan-Zuckerman Construction of Representations for Elliptic Orbital Parameters}
\label{subsec:VZ_construction}

In this subsection, we recall the construction of the unitary representation for elliptic orbital parameters.  We do this by utilizing the cohomological induction construction of Vogan and Zuckerman, and we deal with the possibility of disconnected groups. 
Later, we will apply this to $\pi(\mathcal{O}^{M_{\mathbb{R}}},\Gamma^{M_{\mathbb{R}}})$ of $M_{\mathbb{R}}$ defined in the previous subsection. Our discussion follows Knapp-Vogan \cite{KV95}.

A \emph{pair} $(\mathfrak{g},K_{\mathbb{R}})$ is a finite-dimensional,
 complex Lie algebra $\mathfrak{g}$ and a compact Lie group $K_{\mathbb{R}}$ such that
\begin{enumerate}[(i)]
\item The complexified Lie algebra $\mathfrak{k}:=\mathfrak{k}_{\mathbb{R}}\otimes_{\mathbb{R}}\mathbb{C}$ of $K_{\mathbb{R}}$ is a subalgebra of $\mathfrak{g}$.
\item There is a fixed action of $K_{\mathbb{R}}$ on $\mathfrak{g}$ which, when restricted to $\mathfrak{k}$, yields the adjoint action of $K_{\mathbb{R}}$ on $\mathfrak{k}$.
\item The differential of the fixed action of $K_{\mathbb{R}}$ on $\mathfrak{g}$ yields the adjoint action of $\mathfrak{k}_{\mathbb{R}}$ on $\mathfrak{g}$.
\end{enumerate}

Now, if $(\mathfrak{g},K_{\mathbb{R}})$ is a pair, then a {\it $(\mathfrak{g},K_{\mathbb{R}})$-module} is a complex vector space $V$ with a group action of $K_{\mathbb{R}}$ and a Lie algebra action of $\mathfrak{g}$ satisfying

\begin{enumerate}[(i)]
\item The representation of $K_{\mathbb{R}}$ on $V$ is locally $K_{\mathbb{R}}$-finite. 
\item The differential of the $K_{\mathbb{R}}$ action on $V$ is the restriction of the action of $\mathfrak{g}$ on $V$ to an action of $\mathfrak{k}_{\mathbb{R}}$ on $V$.
\item We have $(\op{Ad}(k)X)v=k(X(k^{-1}v))$ for $k\in K_{\mathbb{R}}$, $X\in \mathfrak{g}$, and $v\in V$.
\end{enumerate}

Suppose $G$ is a connected, complex algebraic group with Lie algebra $\mathfrak{g}$, $G_{\mathbb{R}}$ is a real form of $G$, and $K_{\mathbb{R}}=G_{\mathbb{R}}^{\theta}$ is a maximal compact subgroup of $G_{\mathbb{R}}$ defined by a Cartan involution $\theta$. 
Then $(\mathfrak{g},K_{\mathbb{R}})$ is naturally a pair. 
A $(\mathfrak{g},K_{\mathbb{R}})$ module $V$ is a \emph{Harish-Chandra module} if $V$ is finitely generated over $\mathcal{U}(\mathfrak{g})$, the universal enveloping algebra of $\mathfrak{g}$, and every irreducible representation of $K_{\mathbb{R}}$ has at most finite multiplicity in $V$. 

If $(\mathfrak{g},K_{\mathbb{R}})$ is a pair, then one may form the Hecke algebra $R(\mathfrak{g},K_{\mathbb{R}})$ of this pair. In the setting of the previous example, one may set $R(\mathfrak{g},K_{\mathbb{R}})$ equal to the algebra of bi-$K_{\mathbb{R}}$-finite distributions on $G_{\mathbb{R}}$ that are supported on $K_{\mathbb{R}}$ under the operation convolution. Let $R(K_{\mathbb{R}})$ denotes the algebra of bi-$K_{\mathbb{R}}$-finite distributions on $K_{\mathbb{R}}$. Then we may obtain an
element of $R(\mathfrak{g},K_{\mathbb{R}})$ by applying a left invariant differential operator corresponding to an element of $\mathcal{U}(\mathfrak{g})$, restricting to $K_{\mathbb{R}}$, and then pairing with an element of $R(K_{\mathbb{R}})$. In fact, every element of $R(\mathfrak{g},K_{\mathbb{R}})$ may be written as a finite sum of such distributions and one has the isomorphism of vector spaces
\[R(\mathfrak{g},K_{\mathbb{R}})\simeq R(K_{\mathbb{R}})\otimes_{\mathcal{U}(\mathfrak{k})}\mathcal{U}(\mathfrak{g}).\]

Motivated by this special case, one can define an algebra structure on the vector space $R(K_{\mathbb{R}})\otimes_{\mathcal{U}(\mathfrak{k})}\mathcal{U}(\mathfrak{g})$ for any pair $(\mathfrak{g},K_{\mathbb{R}})$. This algebra is then written $R(\mathfrak{g},K_{\mathbb{R}})$ and called the Hecke algebra of the pair (See Sections I.5, I.6 of \cite{KV95}). For every pair $(\mathfrak{g},K_{\mathbb{R}})$, one obtains an equivalence of categories between the category of $(\mathfrak{g},K_{\mathbb{R}})$-modules and the category of approximately unital left $R(\mathfrak{g},K_{\mathbb{R}})$-modules (See Theorem 1.117 on page 90 of \cite{KV95}).

If $(\mathfrak{h},B_{\mathbb{R}})$ and $(\mathfrak{g},K_{\mathbb{R}})$ are two pairs, then we write
\[
\iota_{\mathfrak{b}_{\mathbb{R}}}\colon \mathfrak{b}_{\mathbb{R}}\hookrightarrow \mathfrak{h},
\quad 
\iota_{\mathfrak{k}_{\mathbb{R}}}\colon \mathfrak{k}_{\mathbb{R}}\hookrightarrow \mathfrak{g}
\]
for the two inclusions. A map of pairs $\phi\colon (\mathfrak{h},B_{\mathbb{R}})\rightarrow (\mathfrak{g},K_{\mathbb{R}})$ is a pair of maps
\[
\phi_{\text{alg}}\colon \mathfrak{h}\rightarrow \mathfrak{g},\quad
 \phi_{\text{gp}}\colon B_{\mathbb{R}}\rightarrow K_{\mathbb{R}}
\]
satisfying 
\begin{enumerate}[(i)]
\item $\phi_{\text{alg}}\circ \iota_{\mathfrak{b}_{\mathbb{R}}}=\iota_{\mathfrak{k}_{\mathbb{R}}}\circ d\phi_{\text{gp}}$
\item $\phi_{\text{alg}}\circ \op{Ad}_{B_{\mathbb{R}}}(b)=\op{Ad}_{K_{\mathbb{R}}}(\phi_{\text{gp}}(b))\circ \phi_{\text{alg}}$ for all $b\in B_{\mathbb{R}}$.
\end{enumerate}

Given a map of pairs $\phi\colon (\mathfrak{h},B_{\mathbb{R}})\rightarrow (\mathfrak{g},K_{\mathbb{R}})$, Zuckerman, Bernstein, and others defined functors from the category of $R(\mathfrak{h},B_{\mathbb{R}})$-modules to the category of $R(\mathfrak{g},K_{\mathbb{R}})$-modules. A strategy to find a non-trivial, interesting $R(\mathfrak{g},K_{\mathbb{R}})$-module is to take a trivial $R(\mathfrak{h},B_{\mathbb{R}})$-module and apply a suitable functor.

If $\phi\colon (\mathfrak{h},B_{\mathbb{R}})\rightarrow (\mathfrak{g},K_{\mathbb{R}})$ is a map of pairs, then $\phi$ gives $R(\mathfrak{g},K_{\mathbb{R}})$ the structure of an approximately unital right $R(\mathfrak{h},B_{\mathbb{R}})$-module (See page 104 of \cite{KV95}). We can then define the functor
$$V\mapsto P_{\mathfrak{h},B_{\mathbb{R}}}^{\mathfrak{g},K_{\mathbb{R}}}(V):=R(\mathfrak{g},K_{\mathbb{R}})\otimes_{R(\mathfrak{h},B_{\mathbb{R}})} V$$
from the category of approximately unital, left $R(\mathfrak{h},B_{\mathbb{R}})$-modules to the category of approximately unital, left $R(\mathfrak{g},K_{\mathbb{R}})$-modules. In addition, we have the functor
\[V\mapsto I_{\mathfrak{h},B_{\mathbb{R}}}^{\mathfrak{g},K_{\mathbb{R}}}(V)
:=\op{Hom}_{R(\mathfrak{h},B_{\mathbb{R}})}(R(\mathfrak{g},K_{\mathbb{R}}),V)_{K_{\mathbb{R}}}\]
from the category of approximately unital, left $R(\mathfrak{h},B_{\mathbb{R}})$-modules to the category of approximately unital, left $R(\mathfrak{g},K_{\mathbb{R}})$-modules where the subscript $K_{\mathbb{R}}$ means we take the submodule of $K_{\mathbb{R}}$-finite vectors. The functor $I$ is a generalization of a functor of Zuckerman and the functor $P$ is a generalization of a functor of Bernstein, which was inspired by Zuckerman's functor (See Section I.1 of \cite{KV95}).  

If $(\mathfrak{g},K_{\mathbb{R}})$ is a pair, then the category of approximately unital, left $R(\mathfrak{g},K_{\mathbb{R}})$-modules has enough projectives and injectives (See Corollary 2.26 and Corollary 2.37 on page 115 of \cite{KV95}). The functor $P_{\mathfrak{h},B_{\mathbb{R}}}^{\mathfrak{g},K_{\mathbb{R}}}$ is right exact and the functor $I_{\mathfrak{h},B_{\mathbb{R}}}^{\mathfrak{g},K_{\mathbb{R}}}$ is left exact; therefore, we have a sequence of left derived functors $(P_{\mathfrak{h},B_{\mathbb{R}}}^{\mathfrak{g},K_{\mathbb{R}}})_p$ and a sequence of right derived functors $(I_{\mathfrak{h},B_{\mathbb{R}}}^{\mathfrak{g},K_{\mathbb{R}}})^p$ for $p=0,1,2,\ldots$ (See Sections II.2 and II.6 of \cite{KV95}). If $V$ is a $(\mathfrak{h},B_{\mathbb{R}})$-module, then it has the structure of an approximately unital, left $R(\mathfrak{h},B_{\mathbb{R}})$-module, and we may apply the functors $(P_{\mathfrak{h},B_{\mathbb{R}}}^{\mathfrak{g},K_{\mathbb{R}}})_p(V)$ and $(I_{\mathfrak{h},B_{\mathbb{R}}}^{\mathfrak{g},K_{\mathbb{R}}})^p(V)$ to obtain approximately unital, left $R(\mathfrak{g},K_{\mathbb{R}})$-modules. These, in turn, may be viewed as $(\mathfrak{g},K_{\mathbb{R}})$-modules.
\bigskip

Next, we construct a unitary representation for an elliptic orbital parameter
 $(\mathcal{O},\Gamma)$ utilizing the functor above.
We fix $\lambda\in \mathcal{O}$ such that $\theta(\lambda)=\lambda$.
For the elliptic case, an admissible polarization
 $\mathfrak{q}_{\lambda}=\mathfrak{l}+\mathfrak{n}$ is uniquely determined.
We let $\mathfrak{q}=\mathfrak{q}_{\lambda}$ in this subsection to simplify the notation.


Observe $e^{\rho(\mathfrak{n})}$
 and $\Gamma_{\lambda}$ are genuine, one-dimensional unitary representation of
 $\widetilde{L_{\mathbb{R}}}^{\rho(\mathfrak{n})}$ ($e^{\rho(\mathfrak{n})}$ is unitary
 since all of the roots in
 $\Delta(\mathfrak{n},\mathfrak{h})$ are imaginary). 
Hence, their tensor product
 $\Gamma_{\lambda}\otimes
 e^{\rho(\mathfrak{n})}$ descends to a one-dimensional
 unitary representation of $L_{\mathbb{R}}$. 
Differentiating and complexifying the action of $L_{\mathbb{R}}$,
 we may view $\Gamma_{\lambda}\otimes e^{\rho(\mathfrak{n})}$ as a one-dimensional
 $(\mathfrak{l}, L_{\mathbb{R}}\cap K_{\mathbb{R}})$-module. 
Observe
 $\overline{\mathfrak{q}}:=\sigma(\mathfrak{q})$
 is the opposite parabolic to $\mathfrak{q}$
 with nilradical $\overline{\mathfrak{n}}
 :=\sigma(\mathfrak{n})$.
Letting $\overline{\mathfrak{n}}$ act trivially,
 we may view $\Gamma_{\lambda} \otimes e^{\rho(\mathfrak{n})}$
 as a one-dimensional
 $(\overline{\mathfrak{q}}, L_{\mathbb{R}}\cap K_{\mathbb{R}})$-module.

Then we may form the
 $(\mathfrak{g}, K_{\mathbb{R}})$-modules
\[(I_{\mathfrak{q}, L_{\mathbb{R}}\cap K_{\mathbb{R}}}^{\mathfrak{g}, K_{\mathbb{R}}})^p
 (\Gamma_{\lambda} \otimes e^{\rho(\mathfrak{n})}),\quad 
 (P_{\overline{\mathfrak{q}}, L_{\mathbb{R}}\cap K_{\mathbb{R}}}^{\mathfrak{g},
 K_{\mathbb{R}}})_p
(\Gamma_{\lambda}\otimes e^{\rho(\mathfrak{n})}).\]
By Theorem 5.35 of \cite{KV95}, these vanish when
 $p>s:=\dim_{\mathbb{C}}(\mathfrak{n}\cap \mathfrak{k})$. Since we assumed $\mathfrak{q}$ is admissible, by Theorem 5.109 of \cite{KV95}, they vanish when $p<s$ and they are isomorphic as $(\mathfrak{g},K_{\mathbb{R}})$-modules when $p=s$.

Let $(G_{\mathbb{R}})_e\subset G_{\mathbb{R}}$ (resp.\ $(K_{\mathbb{R}})_e\subset K_{\mathbb{R}}$)
 denote the identity component of $G_{\mathbb{R}}$ (resp.\ $K_\mathbb{R}$). 
For technical reasons, we will work with representations of the connected group $(G_{\mathbb{R}})_e$ rather than the potentially disconnected group $G_{\mathbb{R}}$ in the sequel. 
Therefore, we begin by considering the $(\mathfrak{g}, (K_{\mathbb{R}})_e)$-module 
\[ (I_{\mathfrak{q},L_{\mathbb{R}}\cap (K_{\mathbb{R}})_e}^{\mathfrak{g},(K_{\mathbb{R}})_e})^s
 ((\Gamma_{\lambda}\otimes e^{\rho(\mathfrak{n})})|_{L_{\mathbb{R}}\cap (G_{\mathbb{R}})_e})\]
where $s=\dim_{\mathbb{C}}(\mathfrak{n}\cap \mathfrak{k})$. 
This $(\mathfrak{g},(K_{\mathbb{R}})_e)$-module is isomorphic to 
\[(P_{\overline{\mathfrak{q}},L_{\mathbb{R}}\cap (K_{\mathbb{R}})_e}^{
  \mathfrak{g},(K_{\mathbb{R}})_e})_s
 ((\Gamma_{\lambda}\otimes e^{\rho(\mathfrak{n})})|_{L_{\mathbb{R}}\cap (G_{\mathbb{R}})_e}).\]

Let $\mathcal{ZU}(\mathfrak{g})$ denote the center of
 the universal enveloping algebra of $\mathfrak{g}$. 
Then $\mathcal{ZU}(\mathfrak{g})$ acts by a scalar on this module
 (see Theorem 5.25 of \cite{KV95}); in the language of Harish-Chandra,
 it is a \emph{quasi-simple} module. 
Choose a system of positive roots $\Delta^+(\mathfrak{l},\mathfrak{h})$
 of $\mathfrak{l}$ with respect to $\mathfrak{h}$ and define
\[\rho_{\mathfrak{l}}=\frac{1}{2}\sum_{\alpha\in \Delta^+(\mathfrak{l},\mathfrak{h})} \alpha.\]
We say $\lambda$ (resp.\ $\mathcal{O}^{(G_{\mathbb{R}})_e}:=(G_{\mathbb{R}})_e\cdot \lambda$,
 $\mathcal{O}$) 
 is in the \emph{good range} for $(G_{\mathbb{R}})_e$ or $G_{\mathbb{R}}$
 (resp.\ $(G_{\mathbb{R}})_e$, $G_{\mathbb{R}}$) if
\begin{equation}\label{eq:good_range}
\op{Re}\langle \lambda+\rho_{\mathfrak{l}}, \alpha^{\vee}\rangle>0
\end{equation}
for all $\alpha\in \Delta(\mathfrak{n},\mathfrak{h})$.
 If $\mathcal{O}^{(G_{\mathbb{R}})_e}$ is in the good range, then 
\[(I_{\mathfrak{q},L_{\mathbb{R}}\cap (K_{\mathbb{R}})_e}^{\mathfrak{g},(K_{\mathbb{R}})_e})^s
 ((\Gamma_{\lambda}\otimes e^{\rho(\mathfrak{n})})|_{L_{\mathbb{R}}\cap (G_{\mathbb{R}})_e})\]
is an irreducible $(\mathfrak{g},(K_{\mathbb{R}})_e)$-module
 (see Zuckerman (unpublished) and Vogan \cite{Vog81} for the original references;
 see Theorem 8.2 of \cite{KV95} for an exposition). 
Since it is an irreducible, quasi-simple $(\mathfrak{g},(K_{\mathbb{R}})_e)$-module,
 by a result of Harish-Chandra (Theorem 4 on page 63 of \cite{HC54a}),
 it is the collection of $(K_{\mathbb{R}})_e$-finite vectors of a (possibly non-unitary)
 continuous representation of $(G_{\mathbb{R}})_e$ on a Hilbert space. 
Vogan proved that the $(\mathfrak{g},(K_{\mathbb{R}})_e)$-module 
\[(I_{\mathfrak{q},L_{\mathbb{R}}\cap (K_{\mathbb{R}})_e}^{\mathfrak{g},(K_{\mathbb{R}})_e})^s
 ((\Gamma_{\lambda}\otimes e^{\rho(\mathfrak{n})})|_{L_{\mathbb{R}}\cap (G_{\mathbb{R}})_e})\]
has an invariant inner product \cite{Vog84} (see Theorem 9.1 on page 598 of \cite{KV95} for an exposition). 
A second result of Harish-Chandra implies that this representation is the collection of $(K_{\mathbb{R}})_e$-finite vectors of a unitary representation of $(G_{\mathbb{R}})_e$ (Theorem 9 on page 233 of \cite{HC53}). We denote this unitary representation by $\pi(\mathcal{O}^{(G_{\mathbb{R}})_e},\Gamma^{(G_{\mathbb{R}})_e})$ where $\mathcal{O}^{(G_{\mathbb{R}})_e}:=(G_{\mathbb{R}})_e\cdot \lambda$ and $\Gamma^{(G_{\mathbb{R}})_e}:=\Gamma|_{\mathcal{O}^{(G_{\mathbb{R}})_e}}$.
\bigskip

In order to define $\pi(\mathcal{O},\Gamma)$, a unitary representation of $G_{\mathbb{R}}$,
 we must deal with several subtleties. 
We define $G_{\mathbb{R}}^{\lambda}$
 to be the subgroup of $G_{\mathbb{R}}$ generated by
 $(G_{\mathbb{R}})_e$ and $L_{\mathbb{R}}$,
 and define $K_{\mathbb{R}}^{\lambda}:=G_{\mathbb{R}}^{\lambda}\cap K_{\mathbb{R}}$.
We note
 $(G_{\mathbb{R}})_e\subset G_{\mathbb{R}}^{\lambda}\subset G_{\mathbb{R}}$. 
Let $V$ be the Hilbert space on which
 $\pi(\mathcal{O}^{(G_{\mathbb{R}})_e},\Gamma^{(G_{\mathbb{R}})_e})$ acts. 
The collection of $(K_{\mathbb{R}})_e$-finite vectors, $V_{(K_{\mathbb{R}})_e}$, is given by
\[(I_{\mathfrak{q},L_{\mathbb{R}}\cap (K_{\mathbb{R}})_e}^{\mathfrak{g},(K_{\mathbb{R}})_e})^s
((\Gamma_{\lambda}\otimes e^{\rho(\mathfrak{n})})|_{L_{\mathbb{R}}\cap (G_{\mathbb{R}})_e})\]
as an $(\mathfrak{g},(K_{\mathbb{R}})_e)$-module. 
Recall that one can compute the above derived functor by applying the
 $\op{Hom}_{R(\mathfrak{q},L_{\mathbb{R}}\cap (K_{\mathbb{R}})_e)}(R(\mathfrak{g}, (K_{\mathbb{R}})_e),\cdot)_{(K_{\mathbb{R}})_e}$ functor to the standard resolution of $(\Gamma_{\lambda}\otimes e^{\rho(\mathfrak{n})})|_{L_{\mathbb{R}}\cap (G_{\mathbb{R}})_e}$ and taking cohomology (See Section II.7 of \cite{KV95}). 
Analogously, we may explicitly write down the $(\mathfrak{g},K_{\mathbb{R}}^{\lambda})$-module 
\[(I_{\mathfrak{q},L_{\mathbb{R}}\cap K_{\mathbb{R}}}^{\mathfrak{g},K_{\mathbb{R}}^{\lambda}})^s
 (\Gamma_{\lambda}\otimes e^{\rho(\mathfrak{n})})\]
by applying the
 $\op{Hom}_{R(\mathfrak{q},L_{\mathbb{R}}\cap K_{\mathbb{R}}^\lambda)}
 (R(\mathfrak{g},K_{\mathbb{R}}^{\lambda}), \cdot)_{K_{\mathbb{R}}^{\lambda}}$ functor
 to the standard resolution of $\Gamma_{\lambda}\otimes e^{\rho(\mathfrak{n})}$ and taking cohomology. One notes that these standard resolutions are identical except that the additional action of $(K_{\mathbb{R}})_e$ is extended to an action of $K_{\mathbb{R}}^{\lambda}$ in the latter case. Analogous remarks hold for the $P$ functor. We wish to extend this action of $K_{\mathbb{R}}^{\lambda}$ on $V_{(K_{\mathbb{R}})_e}$ to an action of $G_{\mathbb{R}}^{\lambda}$ on $V$.

First, recall that we have the decomposition $G_{\mathbb{R}}^{\lambda}=K_{\mathbb{R}}^{\lambda}(G_{\mathbb{R}})_e$. This follows from the decomposition $G_{\mathbb{R}}= K_{\mathbb{R}}(G_{\mathbb{R}})_e$ which follow from equations (12.74) and (12.75) on page 468 of \cite{Kna86}. We wish to define an action $\pi$ of $G_{\mathbb{R}}^{\lambda}$ on $V$ by 
$\pi(kg)=\pi(k)\pi(g)$ for $k\in K_{\mathbb{R}}^{\lambda}$ and $g\in (G_{\mathbb{R}})_e$. Since $V$ is a unitary representation of $(G_{\mathbb{R}})_e$, $\pi(g)$ is well-defined. Note $\pi(k)$ is well-defined on $V_{(K_{\mathbb{R}})_e}$ by the remarks in the above paragraph. Moreover, the inner product on $V_{(K_{\mathbb{R}})_e}$ used to complete this space to $V$ is $K_{\mathbb{R}}^{\lambda}$-invariant (Theorem 9.1 of \cite{KV95}); hence, this action commutes with limits and is well-defined on $V$. Moreover, the action of $(K_{\mathbb{R}})_e$ coincides with the restriction of the actions of $K_{\mathbb{R}}^\lambda$ and $(G_{\mathbb{R}})_e$ on $V$; hence, $\pi$ is a well-defined, continuous map
\[G_{\mathbb{R}}^{\lambda}\times V\rightarrow V.\]
We must check that it is a group action. Writing out the algebraic expression for a group action, one sees that it is enough to check 
\[\pi(kgk^{-1})=\pi(k)\pi(g)\pi(k^{-1})\]
for $g\in (G_{\mathbb{R}})_e$ and $k\in K_{\mathbb{R}}^{\lambda}$. Since $(G_{\mathbb{R}})_e$ is generated by the image of the exponential map, it is enough to check
\begin{equation}\label{eq:KM_identity}
\pi(\exp(\op{Ad}(k)X))=\pi(k)\pi(\exp X)\pi(k^{-1})
\end{equation}
for $X\in \mathfrak{g}_{\mathbb{R}}$ and $k\in K_{\mathbb{R}}^{\lambda}$. Since all of these operators are unitary and therefore commute with limits, it is enough to check this identity when applied to $v\in V_{(K_{\mathbb{R}})_e}$. Harish-Chandra proved that every $v\in V_{(K_{\mathbb{R}})_e}$ is an analytic vector for $V$, that is, the map $(G_{\mathbb{R}})_e\rightarrow V$ by $g\mapsto \pi(g)v$ is analytic \cite{HC53}. In particular, one observes $\pi(\exp(X))=\exp(\pi(X))$ for $X\in \mathfrak{g}_{\mathbb{R}}$. Expanding out both sides of (\ref{eq:KM_identity}) into power series and utilizing part (iii) of the definition of an $(\mathfrak{g},K_{\mathbb{R}}^{\lambda})$-module term by term, one verifies (\ref{eq:KM_identity}). Hence, we have checked that we may extend the action of $(G_{\mathbb{R}})_e$ on $V$ to obtain a unitary representation of $G_{\mathbb{R}}^{\lambda}$ on $V$. We call this second unitary representation $\pi(\mathcal{O}^{G_{\mathbb{R}}^{\lambda}},\Gamma^{G_{\mathbb{R}}^{\lambda}})$, and we note
\begin{equation}\label{eq:M_extension_I}
\pi(\mathcal{O}^{G_{\mathbb{R}}^{\lambda}},\Gamma^{G_{\mathbb{R}}^{\lambda}})|_{(G_{\mathbb{R}})_e}\simeq \pi(\mathcal{O}^{(G_{\mathbb{R}})_e},\Gamma^{(G_{\mathbb{R}})_e}).
\end{equation}

Finally, we must obtain the unitary representation $\pi(\mathcal{O},\Gamma)$. 
We wish for the Harish-Chandra module of this representation to be 
\[(I_{\mathfrak{q},L_{\mathbb{R}}\cap K_{\mathbb{R}}}^{\mathfrak{g}, K_{\mathbb{R}}})^s
 (\Gamma_{\lambda}\otimes e^{\rho(\mathfrak{n})})\]
for $s=\dim(\mathfrak{n}\cap \mathfrak{k})$. 
Utilizing Proposition 5.150 of \cite{KV95}, we observe
\[(I_{\mathfrak{q},L_{\mathbb{R}}\cap K_{\mathbb{R}}}^{\mathfrak{g},K_{\mathbb{R}}})^s
 (\Gamma_{\lambda}\otimes e^{\rho(\mathfrak{n})})
\simeq (I_{\mathfrak{g},K_{\mathbb{R}}^{\lambda}}^{\mathfrak{g},K_{\mathbb{R}}})^0
 (I_{\mathfrak{q},L_{\mathbb{R}}\cap K_{\mathbb{R}}}^{\mathfrak{g},K_{\mathbb{R}}^{\lambda}})^s
 (\Gamma_{\lambda}\otimes e^{\rho(\mathfrak{n})}).\]
Moreover, by Proposition 2.77 on page 136 of \cite{KV95}, the functor
 $(I_{\mathfrak{g},K_{\mathbb{R}}^{\lambda}}^{\mathfrak{g}, K_{\mathbb{R}}})^0$
 is the classical induction functor from a subgroup of finite index. Therefore, 
\begin{equation}\label{eq:M_induction_HC_module}
(I_{\mathfrak{q},L_{\mathbb{R}}\cap K_{\mathbb{R}}}^{\mathfrak{g}, K_{\mathbb{R}}})^s
 (\Gamma_{\lambda}\otimes e^{\rho(\mathfrak{n})})|_{G_{\mathbb{R}}^{\lambda}}
\simeq \bigoplus_{w\in G_{\mathbb{R}}/G_{\mathbb{R}}^{\lambda}} w\cdot (I_{\mathfrak{q},L_{\mathbb{R}}\cap K_{\mathbb{R}}}^{\mathfrak{g},K_{\mathbb{R}}^{\lambda}})^s
 (\Gamma_{\lambda}\otimes e^{\rho(\mathfrak{n})}).
\end{equation}
Mimicking this construction globally, we consider the vector bundle 
\[G_{\mathbb{R}}\times_{G_{\mathbb{R}}^{\lambda}} \pi(\mathcal{O}^{G_{\mathbb{R}}^{\lambda}},\Gamma^{G_{\mathbb{R}}^{\lambda}})\rightarrow G_{\mathbb{R}}/G_{\mathbb{R}}^{\lambda}\]
and we define the unitary representation $\pi(\mathcal{O},\Gamma)$ to be the space of sections
\[\Gamma(G_{\mathbb{R}}/G_{\mathbb{R}}^{\lambda}, G_{\mathbb{R}}\times_{G_{\mathbb{R}}^{\lambda}} \pi(\mathcal{O}^{G_{\mathbb{R}}^{\lambda}},\Gamma^{G_{\mathbb{R}}^{\lambda}})).\]
One observes that the collection of $K_{\mathbb{R}}$-finite vectors of $\pi(\mathcal{O},\Gamma)$ is isomorphic to $(I_{\mathfrak{q},L_{\mathbb{R}}\cap K_{\mathbb{R}}}^{\mathfrak{g}, K_{\mathbb{R}}})^s(\Gamma_{\lambda}\otimes e^{\rho(\mathfrak{n})})$ as a $(\mathfrak{g}, K_{\mathbb{R}})$-module and 
\begin{equation}\label{eq:M_extension_II}
\pi(\mathcal{O},\Gamma)|_{G_{\mathbb{R}}^{\lambda}}\simeq \bigoplus_{w\in G_{\mathbb{R}}/G_{\mathbb{R}}^{\lambda}} w\cdot \pi(\mathcal{O}^{G_{\mathbb{R}}^{\lambda}},\Gamma^{G_{\mathbb{R}}^{\lambda}})
\end{equation}
as a unitary representation of $G_{\mathbb{R}}^{\lambda}$. Combining (\ref{eq:M_extension_I}) and (\ref{eq:M_extension_II}), we obtain
\begin{equation}\label{eq:M_extension}
\pi(\mathcal{O},\Gamma)|_{(G_{\mathbb{R}})_e}\simeq \bigoplus_{w\in G_{\mathbb{R}}/G_{\mathbb{R}}^{\lambda}} w\cdot \pi(\mathcal{O}^{(G_{\mathbb{R}})_e},\Gamma^{(G_{\mathbb{R}})_e}).
\end{equation}

Moreover, if $\mathcal{O}^{G_{\mathbb{R}}}$ is in the good range, then $\pi(\mathcal{O},\Gamma)$ is irreducible since its Harish-Chandra module $(I_{\mathfrak{q},L_{\mathbb{R}}\cap K_{\mathbb{R}}}^{\mathfrak{g}, K_{\mathbb{R}}})^s(\Gamma_{\lambda}\otimes e^{\rho(\mathfrak{n})})$ is irreducible (see Theorem 8.2 of \cite{KV95}).

\subsection{The Schmid-Wong Construction of Maximal Globalizations}
\label{subsec:max_globalization}

Let $G$ be a connected, complex reductive algebraic group, and let $G_{\mathbb{R}}$ be the identity component of $G^{\sigma}\subset G$. In this section, we consider an elliptic orbital parameter $(\mathcal{O},\Gamma)$ for $G_{\mathbb{R}}$ with $\mathcal{O}$ in the good range, and we review the construction of Schmid and Wong of the maximal globalization of the Harish-Chandra module of $\pi(\mathcal{O},\Gamma)$. We will need this maximal globalization construction in Section \ref{sec:KSV}.

Giving a $G_{\mathbb{R}}$-invariant almost complex structure on $\mathcal{O}$ is equivalent to writing 
\[T_{\lambda}\mathcal{O} \otimes_{\mathbb{R}}\mathbb{C}\simeq W\oplus \overline{W}\] 
where $W$, $\overline{W}$ are $G_{\mathbb{R}}(\lambda)$-invariant complex conjugate subspaces of $T_{\lambda}\mathcal{O} \otimes_{\mathbb{R}}\mathbb{C}$. Such a decomposition yields a composition
\[T_{\lambda}\mathcal{O}\hookrightarrow T_{\lambda}\mathcal{O}\otimes_{\mathbb{R}} \mathbb{C}
 \twoheadrightarrow W\]
which defines an isomorphism of real vector spaces $T_{\lambda}\mathcal{O}\simeq W$ and gives $T_{\lambda}\mathcal{O}$ the structure of a complex vector space. In our case, identifying $T_{\lambda}\mathcal{O}\simeq \mathfrak{g}_{\mathbb{R}}/\mathfrak{l}_{\mathbb{R}}$, we may define $W:= \overline{\mathfrak{q}}_{\lambda}/\mathfrak{l}$ in order to give $\mathcal{O}$ a $G_{\mathbb{R}}$-invariant almost complex structure.

Further, Newlander-Nirenburg \cite{NN57} showed that an almost complex structure is integrable (arises from a complex structure) if, and only if $d=\partial+\overline{\partial}$ (see pages 141-142 of \cite{Hor66} for a simpler proof). One checks that this is equivalent to the vanishing of the Nijenhuis tensor and it is equivalent to $T^{(1,0)}\mathcal{O}$ (or $T^{(0,1)}\mathcal{O}$) being closed under the Lie bracket, $[\cdot,\cdot]$. One then checks that, in our context, this condition is satisfied if, and only if the subspace $\mathfrak{q}_{\lambda}\subset \mathfrak{g}$ is a Lie subalgebra. If $(\mathcal{O},\Gamma)$ is an elliptic orbital parameter for $G_{\mathbb{R}}$ and $\theta$ is a Cartan involution on $\mathfrak{g}_{\mathbb{R}}$, then any $\theta$-stable parabolic subalgebra $\mathfrak{q}_{\lambda}\subset \mathfrak{g}$ with Levi factor $\mathfrak{g}(\lambda)$ defines a polarization of $\mathcal{O}$. Therefore, it induces a complex structure on $\mathcal{O}$. 

Next, suppose $\mathfrak{n}$ is the nilradical of $\mathfrak{q}_{\lambda}$. Giving a $G_{\mathbb{R}}$-equivariant, holomorphic line bundle on $\mathcal{O}$ is equivalent to giving a one-dimensional $(\mathfrak{q}_{\lambda},G_{\mathbb{R}}(\lambda)\cap K_{\mathbb{R}})$-module (see Theorem 3.6 on page 17 of \cite{TW70}). In particular, if we differentiate $\Gamma_{\lambda}\otimes e^{\rho(\mathfrak{n})}$, and we extend by zero on $\mathfrak{n}$, then we obtain a one-dimensional $(\mathfrak{q}_{\lambda},G_{\mathbb{R}}(\lambda)\cap K_{\mathbb{R}})$-module. Hence,
$$\mathcal{L}_{\Gamma}:=G_{\mathbb{R}}\times_{G_{\mathbb{R}}(\lambda)} (\Gamma_{\lambda}\otimes e^{\rho(\mathfrak{n})})$$
is a $G_{\mathbb{R}}$-equivariant, holomorphic line bundle on $\mathcal{O}$.
\bigskip

Let $T_{\lambda}\mathcal{O}\otimes_{\mathbb{R}}\mathbb{C}\simeq \mathfrak{g}/\mathfrak{g}(\lambda)$ denote the complexified tangent space to $\mathcal{O}$ at $\lambda$ with dual space $T^*_{\lambda}\mathcal{O}$, let $T^{1,0}_{\lambda}\mathcal{O}\simeq \overline{\mathfrak{q}}_{\lambda}/\mathfrak{g}(\lambda)$ denote the holomorphic tangent space with dual space $(T^{1,0}_{\lambda}\mathcal{O})^*$, and let $T^{0,1}_{\lambda}\mathcal{O}\simeq \mathfrak{q}_{\lambda}/\mathfrak{g}(\lambda)$ denote the anti-holomorphic tangent space to $\mathcal{O}$ at $\lambda$ with dual space $(T^{0,1}_{\lambda}\mathcal{O})^*$. Let 
$$\Omega^m(\mathcal{O},\mathcal{L}_{\Gamma}):=C^{\infty}\left(\mathcal{O},\mathcal{L}_{\Gamma}\otimes \bigwedge^mT^*\mathcal{O}\right)$$ denote the space of smooth, complex-valued $m$-forms on $\mathcal{O}$ with values in $\mathcal{L}_{\Gamma}$, and let 
$$\Omega^{p,q}(\mathcal{O},\mathcal{L}_{\Gamma}):=C^{\infty}\left(\mathcal{O},\mathcal{L}_{\Gamma}\otimes \bigwedge^p(T^{1,0}\mathcal{O})^*\wedge \bigwedge^q(T^{0,1}\mathcal{O})^*\right)$$
denote the space of smooth, complex-valued $(p,q)$-forms on $\mathcal{O}$ with values in $\mathcal{L}_{\Gamma}$. These are both Fr\'{e}chet spaces with respect to the topology of uniform convergence of all derivatives on compact subsets of $\mathcal{O}$. The action of $G_{\mathbb{R}}$ on $\mathcal{L}_{\Gamma}\rightarrow \mathcal{O}$ induces a continuous action on both topological vector spaces. Observe we have a direct sum decomposition 
$$\Omega^m(\mathcal{O},\mathcal{L}_{\Gamma})\simeq \bigoplus_{p+q=m} \Omega^{p,q}(\mathcal{O},\mathcal{L}_{\Gamma})$$
of topological vector spaces. Let $d\colon \Omega^m(\mathcal{O}, \mathcal{L}_{\Gamma})\rightarrow \Omega^{m+1}(\mathcal{O}, \mathcal{L}_{\Gamma})$ denote the exterior derivative. If we restrict $d$ to $\Omega^{p,q}$ and then take the component of the image in $\Omega^{p,q+1}$, then the resulting operator is written $\overline{\partial}^{p,q}$ or simply $\overline{\partial}$. Since $d\circ d=0$, we deduce $\overline{\partial}\circ \overline{\partial}=0$. One defines the $q$th Dolbeault cohomology of $\mathcal{O}$ with values in $\mathcal{L}_{\Gamma}$ by
$$H^q(\mathcal{O},\mathcal{L}_{\Gamma}):=\op{ker}\overline{\partial}^{0,q}/\op{Im}\overline{\partial}^{0,q-1}.$$
Wong showed $\op{Im}\overline{\partial}^{0,q-1}$ is a closed subspace of $\op{ker}\overline{\partial}^{0,q}$ in the induced topology \cite{Won95}. This fact implies $H^q(\mathcal{O},\mathcal{L}_{\Gamma})$ is a Fr\'{e}chet space on which $G_{\mathbb{R}}$ acts continuously.
\bigskip

Let $(\pi,V)$ be an irreducible, unitary representation of a real, reductive group $G_{\mathbb{R}}$. Let $V^{\infty}\subset V$ denote the subset of vectors $v\in V$ for which $g\mapsto \pi(g)v$ is a smooth map $G_{\mathbb{R}}\rightarrow V$. We may differentiate the action of $G_{\mathbb{R}}$ to obtain an action of $\mathfrak{g}$ on $V^{\infty}$. If $K_{\mathbb{R}}$ is a maximal compact subgroup of $G_{\mathbb{R}}$, then the collection of $K_{\mathbb{R}}$-finite vectors, $V_{K_{\mathbb{R}}}\subset V$, is an admissible representation of $K_{\mathbb{R}}$ \cite{HC53}. This means that every irreducible representation of $K_{\mathbb{R}}$ occurs with finite multiplicity in $V_{K_{\mathbb{R}}}$. It follows that $V^{\infty}\supset V_{K_{\mathbb{R}}}$ has the structure of a $(\mathfrak{g},K_{\mathbb{R}})$-module. A \emph{globalization} of this Harish-Chandra module is a continuous action of $G_{\mathbb{R}}$ on a complete, locally convex, Hausdorff topological space $W$ for which $W^{\infty}\supset W_{K_{\mathbb{R}}}$ and $W_{K_{\mathbb{R}}}\simeq V_{K_{\mathbb{R}}}$ as a $(\mathfrak{g},K_{\mathbb{R}})$-module. 

If $(\pi,V)$ is an irreducible, unitary representation of $G_{\mathbb{R}}$, then there is a unique (up to isomorphism) \emph{minimal globalization} $W_{\operatorname{min}}$ of $V_{K_{\mathbb{R}}}$. The globalization $W_{\operatorname{min}}$ has the universal property that for every other globalization $W$ of $V_{K_{\mathbb{R}}}$, there is a unique $G_{\mathbb{R}}$-equivariant injection $W_{\operatorname{min}}\hookrightarrow W$. A vector $v\in V$ is \emph{analytic} if $g\mapsto \pi(g)v$ is an analytic function $G_{\mathbb{R}}\rightarrow V$. The collection of analytic vectors $V^{\omega}$ is a globalization isomorphic to $W_{\operatorname{min}}$. In addition, there is a unique (up to isomorphism) \emph{maximal globalization} $W_{\operatorname{max}}$ of $V_{K_{\mathbb{R}}}$. The globalization $W_{\operatorname{max}}$ has the universal property that for every other globalization $W$ of $V_{K_{\mathbb{R}}}$, there is a unique $G_{\mathbb{R}}$-equivariant injection $W\hookrightarrow W_{\operatorname{max}}$. The space $V^{-\omega}:=((V^*)^{\omega})^*$ is the collection of hyperfunction vectors of $V$; it is a globalization isomorphic to $W_{\operatorname{max}}$. The notions of minimal and maximal globalizations were first introduced by Schmid \cite{Sch85}.
\bigskip

Let $(\mathcal{O},\Gamma)$ be an elliptic orbital parameter with $\mathcal{O}$ in the good range. It was proved by Wong \cite{Won95} that, if $s=\dim_{\mathbb{C}}(\mathfrak{n}\cap \mathfrak{k})$, then $H^s(\mathcal{O},\mathcal{L}_{\Gamma})$ is the maximal globalization of the $(\mathfrak{g},K_{\mathbb{R}})$-module $$(I_{\mathfrak{q}_{\lambda},G_{\mathbb{R}}(\lambda)\cap K_{\mathbb{R}}}^{\mathfrak{g},K_{\mathbb{R}}})^s(\Gamma_{\lambda}\otimes e^{\rho(\mathfrak{n})}).$$
Therefore, $H^s(\mathcal{O},\mathcal{L}_{\Gamma})$ is the maximal globalization of the Harish-Chandra module of $\pi(\mathcal{O},\Gamma)$. 

\subsection{Mackey's Parabolic Induction}
\label{subsec:parabolic_induction}
We finish this section by completing the construction of
 the unitary representation $\pi(\mathcal{O},\Gamma)$
 for a semisimple orbital parameter $(\mathcal{O},\Gamma)$. 
By applying the construction in Section \ref{subsec:VZ_construction}
 to the group $M_{\mathbb{R}}$, we obtain a unitary representation
 $\pi(\mathcal{O}^{M_{\mathbb{R}}},\Gamma^{M_{\mathbb{R}}})$ of $M_{\mathbb{R}}$. 
We extend the representation
 $\pi(\mathcal{O}^{M_{\mathbb{R}}},\Gamma^{M_{\mathbb{R}}})
 \boxtimes e^{\lambda_n}$ of $M_{\mathbb{R}}A_{\mathbb{R}}$
 trivially on $(N_P)_{\mathbb{R}}$ to a representation of $P_{\mathbb{R}}$. 
This unitary representation defines a (possibly infinite dimensional)
 $G_{\mathbb{R}}$-equivariant Hermitian vector bundle
\[{\mathcal V}:=
G_{\mathbb{R}}\times_{P_{\mathbb{R}}} 
 (\pi(\mathcal{O}^{M_{\mathbb{R}}},\Gamma^{M_{\mathbb{R}}})
 \boxtimes  e^{\lambda_n})
\longrightarrow G_{\mathbb{R}}/P_{\mathbb{R}}.\]
Then we form the representation
\begin{align*}
\pi(\mathcal{O},\Gamma)
:=L^2(G_{\mathbb{R}}/P_{\mathbb{R}},{\mathcal V}\otimes \mathcal{D}^{1/2})
\end{align*}
where $\mathcal{D}^{1/2}\rightarrow G_{\mathbb{R}}/P_{\mathbb{R}}$ denotes the bundle of half densities on $G_{\mathbb{R}}/P_{\mathbb{R}}$. This defines a unitary representation of $G_{\mathbb{R}}$ which we associate to the parameter $(\mathcal{O},\Gamma)$. We say $\lambda$ (or $\mathcal{O}$) is in the \emph{good range} if $\mathcal{O}^{M_{\mathbb{R}}}$ is in the good range for $M_{\mathbb{R}}$. 

If $\mathcal{O}$ is in the good range, then the representation $\pi(\mathcal{O},\Gamma)$ is irreducible (see Theorem 14.93 on page 618 of \cite{Kna86}).

Finally, we remark that the representation $\pi(\mathcal{O},\Gamma)$ does not depend on the choice of maximally real, admissible polarization $\{\mathfrak{q}_{\lambda}\}_{\lambda\in \mathcal{O}}$ even though we used this polarization to define $\pi(\mathcal{O},\Gamma)$. Indeed, the parabolic $\mathfrak{q}_{\mathfrak{m}}\subset \mathfrak{m}$ defined in (\ref{eq:imaginary_parabolic}) does not depend on the choice of maximally real, admissible polarization. Therefore, the representation $\pi(\mathcal{O}^{M_{\mathbb{R}}},\Gamma^{M_{\mathbb{R}}})$ is independent of the choice of maximally real, admissible polarization. Now, a glance at the induced character formula (see for instance page 352 of \cite{Kna86}) shows that the Harish-Chandra character of the parabolically induced representation $\pi(\mathcal{O},\Gamma)$ does not depend on the choice of real parabolic subgroup $P_{\mathbb{R}}\subset G_{\mathbb{R}}$. Since the Harish-Chandra character determines the representation \cite{HC54b}, $\pi(\mathcal{O},\Gamma)$ is independent of the choice of maximally real, admissible polarization.

\section{Reduction to the Elliptic Case}
\label{sec:reduction_elliptic}

In this section, we reduce the proof of Theorem \ref{thm:main} to the case where $(\mathcal{O},\Gamma)$ is an elliptic orbital parameter for $G_{\mathbb{R}}$. First, we must discuss the work of Rossmann \cite{Ro84}, \cite{Ro90}.

\subsection{Rossmann's Work on Character Formulas and Contours}
\label{subsec:Rossmann_characters}

Suppose $\pi$ is an irreducible, admissible representation of a real, reductive group $G_{\mathbb{R}}$ and let $\Theta(\pi)$ denote the Harish-Chandra character of $\pi$. The Harish-Chandra character is a generalized function on $G_{\mathbb{R}}$ given by integration against an analytic, locally $L^1$ function on $G_{\mathbb{R}}'\subset G_{\mathbb{R}}$, the subset of regular semisimple elements which we will, by an abuse of notation, also call $\Theta(\pi)$ \cite{HC65b}. Let $|dg|$ be a non-zero, invariant density on $G_{\mathbb{R}}$, and let $|dX|$ be a non-zero, translation invariant density on $\mathfrak{g}_{\mathbb{R}}$. We may pull back the density $|dg|$ under the exponential map 
$$\exp\colon \mathfrak{g}_{\mathbb{R}}\longrightarrow G_{\mathbb{R}}$$
to obtain an equality
$$\exp^*|dg|=j_{G_{\mathbb{R}}}(X)|dX|$$
for an $\op{Ad}(G_{\mathbb{R}})$-invariant analytic function $j_{G_{\mathbb{R}}}$ on $\mathfrak{g}_{\mathbb{R}}$. We normalize the densities $|dg|$ and $|dX|$ so that $j_{G_{\mathbb{R}}}(0)=1$, and one notes that there exists a unique analytic function $j_{G_{\mathbb{R}}}^{1/2}$ satisfying $(j_{G_{\mathbb{R}}}^{1/2})^2=j_{G_{\mathbb{R}}}$ and $j_{G_{\mathbb{R}}}^{1/2}(0)=1$ \cite{HC65b}.

If we pull back the analytic function $\Theta(\pi)$ to the Lie algebra $\mathfrak{g}_{\mathbb{R}}$ and multiply by $j_{G_{\mathbb{R}}}^{1/2}$, then we obtain
\[\theta(\pi):=\exp^*\Theta(\pi)\cdot j_{G_{\mathbb{R}}}^{1/2}.\]
The function $\theta(\pi)$ is analytic on a dense subset of $\mathfrak{g}_{\mathbb{R}}$, it defines a distribution on $\mathfrak{g}_{\mathbb{R}}$, and it is called the Lie algebra analogue of the character of $\pi$. The distribution $\Theta(\pi)$ is an eigendistribution for the algebra of invariant constant coefficient linear differential operators on $G_{\mathbb{R}}$. Multiplication by $j_{G_{\mathbb{R}}}^{1/2}$ is necessary to make $\theta(\pi)$ into an invariant eigendistribution on $\mathfrak{g}_{\mathbb{R}}$ \cite{HC65b}. Let $\op{Diff}(\mathfrak{g}_{\mathbb{R}})^{G_{\mathbb{R}}}$ denote the algebra of $\op{Ad}^*(G_{\mathbb{R}})$-invariant constant coefficient linear differential operators on $\mathfrak{g}_{\mathbb{R}}$. An \emph{invariant eigendistribution} on $\mathfrak{g}_{\mathbb{R}}$ is a $G_{\mathbb{R}}$-invariant distribution $\theta$ on $\mathfrak{g}_{\mathbb{R}}$ for which there exists an algebra homomorphism 
$$
\chi_{\theta}\colon \op{Diff}(\mathfrak{g}_{\mathbb{R}})^{G_{\mathbb{R}}}\rightarrow \mathbb{C}
$$
such that
$$D\theta=\chi_{\theta}(D)\theta.$$
The algebra homomorphism $\chi_{\theta}$ is called the \emph{infinitesimal character} of the invariant eigendistribution $\theta$. By abuse of notation, we will also call a certain Weyl group orbit on $\mathfrak{c}^*$, the dual space of the universal Cartan subalgebra of $\mathfrak{g}$, associated to $\theta$ (and defined below) the infinitesimal character of $\theta$.

If $D\in \op{Diff}(\mathfrak{g}_{\mathbb{R}})$ is a constant coefficient linear differential operator on $\mathfrak{g}$, then there exists a unique (holomorphic) polynomial $p\in \op{Pol}(\mathfrak{g}^*)$ such that 
$$\mathcal{F}[D\varphi]=p\cdot \mathcal{F}[\varphi]$$
for all $\varphi\in C_c^{\infty}(\mathfrak{g}_{\mathbb{R}})$. In this case, we write $\mathcal{F}[D]=p$ and we say that $p$ is the Fourier transform of $D$. Then the Fourier transform yields an isomorphism
$$\mathcal{F}\colon \op{Diff}(\mathfrak{g}_{\mathbb{R}})^{G_{\mathbb{R}}}\stackrel{\sim}\longrightarrow \op{Pol}(\mathfrak{g}^*)^G$$ 
between $\op{Ad}^*(G_{\mathbb{R}})$-invariant constant coefficient linear differential operators on $\mathfrak{g}_{\mathbb{R}}$ and $\op{Ad}^*(G)$-invariant polynomials on $\mathfrak{g}^*$. If $\theta$ is an invariant eigendistribution on $\mathfrak{g}_{\mathbb{R}}$, then the \emph{geometric infinitesimal character} of $\theta$ is defined to be the $\op{Ad}(G)$-invariant subset of $\mathfrak{g}^*$ given by
\begin{equation}\label{eq:Inf_Ch}
\Omega(\theta):=\left\{\lambda\in \mathfrak{g}^* \mid \mathcal{F}[D](\lambda)=\chi_{\theta}(D)\ \text{for\ all}\ D\in \op{Diff}(\mathfrak{g}_{\mathbb{R}})^{G_{\mathbb{R}}}\right\}.
\end{equation}
If $\theta=\theta(\pi)$ is the Lie algebra analogue of the character of an irreducible, admissible representation $\pi$, then we call $\Omega(\pi):=\Omega(\theta)$ the geometric infinitesimal character of $\pi$. If $\mathfrak{h}\subset \mathfrak{g}$ is a Cartan subalgebra, then we may use the root space decomposition of $\mathfrak{g}$ with respect to $\mathfrak{h}$ to embed $\mathfrak{h}^*\subset \mathfrak{g}^*$ and we may restrict $\op{Ad}^*(G)$-invariant polynomials on $\mathfrak{g}^*$ to Weyl group invariant polynomials on $\mathfrak{h}^*$. Taking the direct limit, we obtain a restriction map
\[\op{Pol}(\mathfrak{g}^*)^G\rightarrow \op{Pol}(\mathfrak{c}^*)^W\]
denoted $p\mapsto p|_{\mathfrak{c}^*}$ where $W$ denotes the Weyl group of the universal Cartan subalgebra $\mathfrak{c}$.
Then we obtain a natural fibration
\begin{equation}\label{eq:fibration}
q\colon \mathfrak{g}^*\rightarrow \mathfrak{c}^*/W
\end{equation}
where $\xi\in \mathfrak{g}^*$ maps to the unique $W$-orbit of points $\eta\in \mathfrak{c}^*$ with
\[p(\xi)=p|_{\mathfrak{c}^*}(\eta)\] 
for all $p\in \op{Pol}(\mathfrak{g}^*)^G$. Then $\Omega(\theta)$ is the inverse image under $q$ of the \emph{infinitesimal character} of $\theta$; in fact one can take this to be the definition of the infinitesimal character. One deduces that $\Omega(\theta)$ is a finite union of $\op{Ad}^*(G)$-orbits in $\mathfrak{g}^*$. If the infinitesimal character of $\theta$ is a Weyl group orbit in $\mathfrak{c}^*$ consisting of regular elements or equivalently, $\Omega(\theta)$ consists entirely of regular elements, then we say that $\theta$ has \emph{regular infinitesimal character}. In this case, $\Omega(\theta)$ is a closed $G_{\mathbb{R}}$-orbit of maximal dimension. When $\theta=\theta(\pi)$ is the Lie algebra analogue of the character of an irreducible admissible representation of $\pi$, we say $\pi$ has regular infinitesimal character if $\theta(\pi)$ has regular infinitesimal character. The infinitesimal character of the representation $\pi(\mathcal{O},\Gamma)$ is the Weyl group orbit through $\lambda+\rho_{\mathfrak{l}}\in \mathfrak{h}^*\simeq \mathfrak{c}^*$ (see Corollary 5.25 of \cite{KV95}). If $\mathcal{O}$ is in the good range, then $\lambda+\rho_{\mathfrak{l}}$ is regular and $\pi(\mathcal{O},\Gamma)$ has regular infinitesimal character.
\bigskip

Suppose $\Omega$ is a regular, semisimple complex coadjoint orbit for $G$ in $\mathfrak{g}^*$. Write $\mathfrak{g}^*=\mathfrak{g}_{\mathbb{R}}^*\oplus \sqrt{-1}\mathfrak{g}_{\mathbb{R}}^*$, write $\xi=\op{Re}\xi+\op{Im}\xi$ for the corresponding decomposition of $\xi\in \mathfrak{g}^*$, and let $|\cdot|$ be a fixed norm on $\mathfrak{g}^*$. Rossmann defined a locally finite $\mathbb{Z}$-linear combination 
$$\gamma=\sum c_k \sigma_k$$
of continuous maps $\sigma_k$ from a $d$-simplex to $\Omega$ to be an \emph{admissible d-chain} if 
\begin{enumerate}[(a)]
\item There exists a constant $C$ for which $|\op{Re}\sigma_k|\leq C$ for all $k$.
\item The sum
$$\sum_k |c_k| \max_{x\in \op{supp}\sigma_k}\frac{\op{vol}(\sigma_k)}{1+|x|^N}$$
is finite for some natural number $N>0$ where $\op{vol}(\sigma_k)$ denotes the Euclidean volume of the image of $\sigma_k$ in $\Omega$.
\item The boundary $\partial \gamma$ satisfies (a) and (b).
\end{enumerate}  

Then Rossmann formed a theory of homology by defining $_{\prime}\mathcal{H}_d(\Omega)$ to be the space of closed, admissible $d$-chains modulo the space of exact, admissible $d$-chains for every nonnegative integer $d$ (see page 264 of \cite{Ro90}). Let 
$$n=\dim_{\mathbb{C}}\Omega=\frac{1}{2}\dim_{\mathbb{R}}\Omega.$$
An oriented, closed, $n$-dimensional, real-analytic submanifold $\mathcal{M}\subset \Omega$, which admits an admissible triangulation defines an element of $_{\prime}\mathcal{H}_n(\Omega)$. Moreover, any two admissible triangulations of $\mathcal{M}$ yield the same element of $_{\prime}\mathcal{H}_n(\Omega)$. Further, any element of $_{\prime}\mathcal{H}_n(\Omega)$ can be represented as a finite linear combination of oriented, $n$-dimensional, real-analytic submanifolds $\mathcal{M}\subset \Omega$ which admit admissible triangulations (see Remark 1.1 on pages 265-266 of \cite{Ro90}). An \emph{admissible contour} in $\Omega$ is a finite linear combination of oriented, $n$-dimensional, real-analytic submanifolds $\mathcal{M}\subset \Omega$ which admit admissible triangulations.
\bigskip

If $\mu\in C_c^{\infty}(\mathfrak{g}_{\mathbb{R}},\mathcal{D}({\mathfrak{g}_{\mathbb{R}}}))$ is a smooth, compactly supported density on $\mathfrak{g}$, then we define the \emph{Fourier transform} of $\mu$ to be
$$\mathcal{F}[\mu](\xi):=\int_{\mathfrak{g}_{\mathbb{R}}} e^{\langle \xi,X\rangle} d\mu(X).$$
By the Payley-Wiener Theorem, $\mathcal{F}[\mu]$ is a holomorphic function on $\mathfrak{g}^*$ that decays rapidly when restricted to any strip $\{\xi\in \mathfrak{g}^* \mid |\op{Re}\xi|\leq C\}$ with $C>0$. If $\mathcal{C}\subset \Omega$ is an admissible contour, then we define the \emph{Fourier transform} of $\mathcal{C}$ to be 
\begin{equation}\label{eq:Fourier_transform_2}
\langle \mathcal{F}[\mathcal{C}], \mu\rangle:=\int_{\mathcal{C}} \mathcal{F}[\mu] \frac{\omega^{\wedge m}}{(2\pi\sqrt{-1})^m m!}
\end{equation}
\noindent where $\omega$ is the Kirillov-Kostant form on $\Omega$ (see (\ref{eq:Kirillov-Kostant_form}) and replace $G_{\mathbb{R}}$ with the complex group $G$) and $n=2m$. Notice that the conditions (a), (b) guarantee the convergence of (\ref{eq:Fourier_transform_2}). By Stokes' Theorem, the above integral only depends on the homology class $[\mathcal{C}]\in$ $_{\prime}\mathcal{H}_n(\Omega)$. Rossmann proved that every Lie algebra analogue of the character $\theta(\pi)$ of an irreducible admissible representation $\pi$ of $G_{\mathbb{R}}$ with geometric infinitesimal character $\Omega$ can be written as $\mathcal{F}[\mathcal{C}(\pi)]$ for a unique homology class $[\mathcal{C}(\pi)]\in$ $_{\prime}\mathcal{H}_n(\Omega)$. Moreover, these elements span the finite dimensional complex vector space $_{\prime}\mathcal{H}_n(\Omega)$ (see Theorem 1.4 on page 268 of \cite{Ro90}).
\bigskip

We now discuss an orientation on the manifold
 $\mathcal{C}(\mathcal{O},\Gamma,\mathfrak{q},\sigma_c)$
 to define its Fourier transform
 $\mathcal{F}[\mathcal{C}(\mathcal{O},\Gamma,\mathfrak{q},\sigma_c)]$.
Utilizing (\ref{eq:contour}), we have a fiber bundle structure
$$\varpi\colon \mathcal{C}(\mathcal{O},\Gamma,\mathfrak{q},\sigma_c)\rightarrow \mathcal{O}.$$
Put $k:=\frac{1}{2}\dim\mathcal{O}$.
The canonical symplectic form $\omega$ is purely imaginary on $\mathcal{O}$.
Therefore, the top-dimensional form
\[\frac{\omega^{\wedge k}}{(\sqrt{-1})^k}\] 
gives an orientation on $\mathcal{O}$.
Next, let $l$ be half the dimension of a fiber of $\varpi$. 
Recall from (\ref{eq:contour}) that
 $\varpi^{-1}(\lambda)=\lambda+\mathcal{O}_{\rho}^{U_{\lambda}}$. 
Here $\mathcal{O}_{\rho}^{U_{\lambda}}$ is a coadjoint orbit in
 $\sqrt{-1}\mathfrak{u}_{\lambda}^*$ for the compact subgroup $U_{\lambda}$ of $L$. 
Therefore, we get an orientation on $\varpi^{-1}(\lambda)$
 by using the canonical symplectic form on
 the coadjoint orbit $\mathcal{O}_{\rho}^{U_{\lambda}}$
 as we did for $\mathcal{O}$.
They define an orientations on the total space 
 $\mathcal{C}(\mathcal{O},\Gamma,\mathfrak{q},\sigma_c)$.

We remark that the above definition depends on a choice of $\sqrt{-1}$.
The other choice of $\sqrt{-1}$ reverses the orientation on
 $\mathcal{C}(\mathcal{O},\Gamma,\mathfrak{q},\sigma_c)$ exactly when
 $m(=k+l)$ is odd.
However, the definition of the Fourier transform \eqref{eq:Fourier_transform_2}
 also involves $\sqrt{-1}$ and one can see that 
 $\mathcal{F}[\mathcal{C}(\mathcal{O},\Gamma,\mathfrak{q},\sigma_c)]$
 does not depend on the choice of $\sqrt{-1}$.
Therefore, our formula (\ref{eq:character_formula})
 has canonical meaning independent of a choice of $\sqrt{-1}$.

\subsection{Reduction to the Elliptic Case}
\label{subsec:elliptic_reduction}

We return to the notation of Section~\ref{subsec:parabolics}. As in (\ref{eq:contourII}), we define the contour

\[\mathcal{C}(\mathcal{O},\Gamma,\mathfrak{q},\sigma_c):=\left\{g\cdot \lambda+u\cdot \rho_{\mathfrak{l}} \mid g\in G_{\mathbb{R}},\ u\in U,\ g\cdot \mathfrak{q}_{\lambda}=u\cdot \mathfrak{q}_{\lambda}\right\}\]
where $\rho_{\mathfrak{l}}$ is half the sum of the positive roots of $\mathfrak{l}$ with respect to a $\sigma$ and $\sigma_c$-stable Cartan subalgebra $\mathfrak{h}\subset \mathfrak{l}$ and a choice of positive roots. In addition, we define the contour
\begin{align*}
&\mathcal{C}(\mathcal{O}^{(M_{\mathbb{R}})_e},\Gamma^{(M_{\mathbb{R}})_e},\mathfrak{q}_{\mathfrak{m}},\sigma_c|_{M_e})\\
&:=\left\{m\cdot \lambda_c+u\cdot \rho_{\mathfrak{l}\cap\mathfrak{m}} \mid m\in (M_{\mathbb{R}})_e,\ u\in U\cap M_e,\ m\cdot \mathfrak{q}_{\mathfrak{m}}=u\cdot \mathfrak{q}_{\mathfrak{m}}\right\}
\end{align*}
where $\rho_{\mathfrak{l}\cap\mathfrak{m}}$ is half the sum of the positive roots of $\mathfrak{l}\cap\mathfrak{m}$ which are restrictions of positive roots of $\mathfrak{l}$ to $\mathfrak{h}\cap\mathfrak{m}$.
Note that
 $\rho_{\mathfrak{l}}|_{\mathfrak{h}\cap\mathfrak{m}}=\rho_{\mathfrak{l}\cap\mathfrak{m}}$
 and $\rho_{\mathfrak{l}}|_{\mathfrak{a}}=0$.

In the case where $\pi=\pi(\mathcal{O},\Gamma)$, we let $\theta(\mathcal{O},\Gamma):=\theta(\pi)$ denote the Lie algebra analogue of the character.

\begin{lemma}\label{lem:elliptic_reduction}
Let $G$ be a connected complex reductive algebraic group, and let $(G^{\sigma})_e\subset G_{\mathbb{R}}\subset G^{\sigma}$ be a real form of $G$. Let $(\mathcal{O},\Gamma)$ be a semisimple orbital parameter for $G_{\mathbb{R}}$, fix $\lambda\in \mathcal{O}$, and fix a maximal compact subgroup $U=G^{\sigma_c}\subset G$ for which the anti-holomorphic involution $\sigma_c$ commutes with $\sigma$ and stabilizes $L$. Define $M$, $M_{\mathbb{R}}$, $(\mathcal{O}^{M_{\mathbb{R}}},\Gamma^{M_{\mathbb{R}}})$, and $\mathfrak{q}_{\mathfrak{m}}$ as in Section~\ref{subsec:parabolics}. 
If the character formula
\begin{equation} \label{eq:elliptic_character_formula}
\theta(\mathcal{O}^{(M_{\mathbb{R}})_e},\Gamma^{(M_{\mathbb{R}})_e})=\mathcal{F}[\mathcal{C}(\mathcal{O}^{(M_{\mathbb{R}})_e},\Gamma^{(M_{\mathbb{R}})_e},\mathfrak{q}_{\mathfrak{m}},\sigma_c|_{M_e})]
\end{equation} 
holds for $(M_{\mathbb{R}})_e$ and $\mathcal{O}$ is in the good range, then the character formula
\begin{equation} \label{eq:general_character_formula}
\theta(\mathcal{O},\Gamma)=\mathcal{F}[\mathcal{C}(\mathcal{O},\Gamma,\mathfrak{q},\sigma_c)]
\end{equation} 
holds for $G_{\mathbb{R}}$.
\end{lemma}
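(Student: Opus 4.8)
The plan is to track the two elementary constructions by which $\pi(\mathcal{O},\Gamma)$ is assembled from the elliptic data on $(M_{\mathbb{R}})_e$: first the passage from $(M_{\mathbb{R}})_e$ to the possibly disconnected group $M_{\mathbb{R}}$ (an induction from a finite-index subgroup, as in \eqref{eq:M_extension}), and then Mackey's normalized real parabolic induction from $P_{\mathbb{R}}=M_{\mathbb{R}}A_{\mathbb{R}}(N_P)_{\mathbb{R}}$ to $G_{\mathbb{R}}$ (Section~\ref{subsec:parabolic_induction}). I would show that each of these is mirrored, on the level of admissible contours, in a way compatible with the Fourier transform \eqref{eq:Fourier_transform_2}, so that \eqref{eq:elliptic_character_formula} propagates to \eqref{eq:general_character_formula}.

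For the first step, \eqref{eq:M_extension} gives $\pi(\mathcal{O}^{M_{\mathbb{R}}},\Gamma^{M_{\mathbb{R}}})|_{(M_{\mathbb{R}})_e}\simeq\bigoplus_{w\in M_{\mathbb{R}}/M_{\mathbb{R}}^{\lambda_c}}w\cdot\pi(\mathcal{O}^{(M_{\mathbb{R}})_e},\Gamma^{(M_{\mathbb{R}})_e})$, so on $\mathfrak{m}_{\mathbb{R}}$ the Lie algebra analogue of the character satisfies $\theta(\mathcal{O}^{M_{\mathbb{R}}},\Gamma^{M_{\mathbb{R}}})=\sum_{w}w\cdot\theta(\mathcal{O}^{(M_{\mathbb{R}})_e},\Gamma^{(M_{\mathbb{R}})_e})$. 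Correspondingly, enlarging $(M_{\mathbb{R}})_e$, $U\cap M_e$ to $M_{\mathbb{R}}$, $U\cap M$ in the definition of the contour exhibits $\mathcal{C}(\mathcal{O}^{M_{\mathbb{R}}},\Gamma^{M_{\mathbb{R}}},\mathfrak{q}_{\mathfrak{m}},\sigma_c|_M)$ as a finite union $\bigcup_{w}w\cdot\mathcal{C}(\mathcal{O}^{(M_{\mathbb{R}})_e},\Gamma^{(M_{\mathbb{R}})_e},\mathfrak{q}_{\mathfrak{m}},\sigma_c|_{M_e})$ of $\op{Ad}^*(w)$-translates with compatible orientations, because $M_{\mathbb{R}}=M_{\mathbb{R}}^{\lambda_c}(M_{\mathbb{R}})_e$ and $M_{\mathbb{R}}^{\lambda_c}$ fixes $\lambda_c$, $\mathfrak{q}_{\mathfrak{m}}$ and $\Gamma^{M_{\mathbb{R}}}_{\lambda_c}$. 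Since $\mathcal{F}$ is equivariant for $\op{Ad}^*(w)$, summing \eqref{eq:elliptic_character_formula} over $w$ gives $\theta(\mathcal{O}^{M_{\mathbb{R}}},\Gamma^{M_{\mathbb{R}}})=\mathcal{F}[\mathcal{C}(\mathcal{O}^{M_{\mathbb{R}}},\Gamma^{M_{\mathbb{R}}},\mathfrak{q}_{\mathfrak{m}},\sigma_c|_M)]$ on $\mathfrak{m}_{\mathbb{R}}$.

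For the second step, recall that $\pi(\mathcal{O},\Gamma)$ is normalized parabolic induction of $\pi(\mathcal{O}^{M_{\mathbb{R}}},\Gamma^{M_{\mathbb{R}}})\boxtimes e^{\lambda_n}$. The classical induced character formula (see for instance page 352 of \cite{Kna86}), transported to the Lie algebra by $j^{1/2}$ (which factors through the Langlands decomposition), identifies $\theta(\mathcal{O},\Gamma)$ with the unique $\op{Ad}(G_{\mathbb{R}})$-invariant distribution supported on $\op{Ad}(G_{\mathbb{R}})(\mathfrak{m}_{\mathbb{R}}\oplus\mathfrak{a}_{\mathbb{R}})$ whose restriction to the regular set of $\mathfrak{g}_{\mathbb{R}}(\lambda_n)=\mathfrak{m}_{\mathbb{R}}\oplus\mathfrak{a}_{\mathbb{R}}$ is $\bigl(\theta(\mathcal{O}^{M_{\mathbb{R}}},\Gamma^{M_{\mathbb{R}}})\otimes e^{\lambda_n}\bigr)\cdot\bigl|\det(\op{ad}(\cdot)|_{(\mathfrak{n}_{\mathfrak{p}})_{\mathbb{R}}})\bigr|^{-1/2}$, up to a normalizing constant. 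On the geometric side I would realize $\mathcal{C}(\mathcal{O},\Gamma,\mathfrak{q},\sigma_c)$, in the form \eqref{eq:contourII}, as a bundle over $G_{\mathbb{R}}/P_{\mathbb{R}}$, sending $g\cdot\lambda+u\cdot\rho_{\mathfrak{l}}$ to the real parabolic $g\cdot\mathfrak{p}$ (well defined since $\mathfrak{q}_\lambda\subset\mathfrak{p}$, so $g\cdot\mathfrak{p}$ is determined by $g\cdot\mathfrak{q}_\lambda=u\cdot\mathfrak{q}_\lambda$), whose fibre over $g\cdot\mathfrak{p}$ is $g$ applied to $\lambda_n+\mathcal{C}(\mathcal{O}^{M_{\mathbb{R}}},\Gamma^{M_{\mathbb{R}}},\mathfrak{q}_{\mathfrak{m}},\sigma_c|_M)$ together with the flat family of nilradical translates $\{\op{ad}^*(X)(\lambda_c+\lambda_n+\rho_{\mathfrak{l}})\mid X\in\mathfrak{n}_{\mathfrak{p}}\}$; here the identities $\rho_{\mathfrak{l}}|_{\mathfrak{h}\cap\mathfrak{m}}=\rho_{\mathfrak{l}\cap\mathfrak{m}}$, $\rho_{\mathfrak{l}}|_{\mathfrak{a}}=0$ and the $A$-regularity of $\lambda_n$ show that this bundle lies in the single regular orbit $\Omega=G\cdot(\lambda+\rho_{\mathfrak{l}})=\Omega(\pi(\mathcal{O},\Gamma))$, i.e.\ that $\Omega$ is the coadjoint orbit induced from the $M_{\mathbb{R}}A_{\mathbb{R}}$-orbit through $\lambda_c+\lambda_n+\rho_{\mathfrak{l}\cap\mathfrak{m}}$. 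Iterating \eqref{eq:Fourier_transform_2} along this bundle by Fubini and Stokes, and carrying out the flat integration in the $\mathfrak{n}_{\mathfrak{p}}$-directions — which by Fourier inversion replaces $\mathcal{F}[\mu]$ by $\mathcal{F}$ of the pushforward of $\mu$ to $\mathfrak{g}_{\mathbb{R}}(\lambda_n)$ and supplies precisely the Jacobian $|\det(\op{ad}(\cdot)|_{(\mathfrak{n}_{\mathfrak{p}})_{\mathbb{R}}})|^{-1/2}$ — reduces $\mathcal{F}[\mathcal{C}(\mathcal{O},\Gamma,\mathfrak{q},\sigma_c)]$ to $\int_{G_{\mathbb{R}}/P_{\mathbb{R}}}g\cdot\bigl(\mathcal{F}[\mathcal{C}(\mathcal{O}^{M_{\mathbb{R}}},\Gamma^{M_{\mathbb{R}}},\mathfrak{q}_{\mathfrak{m}},\sigma_c|_M)]\otimes e^{\lambda_n}\bigr)\cdot|\det(\cdots)|^{-1/2}$, which by the first step equals the expression above, namely $\theta(\mathcal{O},\Gamma)$.

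I expect the main obstacle to lie in the normalization bookkeeping of this second step: showing that the top power of the Kirillov--Kostant form $\omega^{\wedge m}$ on $\Omega$, restricted to $\mathcal{C}(\mathcal{O},\Gamma,\mathfrak{q},\sigma_c)$, factors — up to the correct power of $2\pi\sqrt{-1}$ — as a density on $G_{\mathbb{R}}/P_{\mathbb{R}}$ wedged with the form on $\mathcal{C}(\mathcal{O}^{M_{\mathbb{R}}},\dots)$ used in the $M_{\mathbb{R}}$-statement and with a flat form in the $\mathfrak{n}_{\mathfrak{p}}$-directions, with the orientations of Section~\ref{subsec:Rossmann_characters} matching throughout; and in checking that $\mathcal{C}(\mathcal{O},\Gamma,\mathfrak{q},\sigma_c)$ is genuinely an admissible contour — the tempered-growth condition is automatic from $\mathcal{O}$ being semisimple, while the uniform boundedness of the fibres in the real directions $\mathfrak{g}_{\mathbb{R}}^*$ needed for convergence of \eqref{eq:Fourier_transform_2} is exactly where the good-range hypothesis on $\mathcal{O}$ (equivalently on $\mathcal{O}^{M_{\mathbb{R}}}$) enters. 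Alternatively one could deduce \eqref{eq:general_character_formula} from \eqref{eq:elliptic_character_formula} by invoking Rossmann's compatibility of the contour--character correspondence with real parabolic induction, but running the fibration argument by hand keeps the orientations and constants under control.
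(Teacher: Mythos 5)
Your plan follows the paper's proof in outline: both arguments split the reduction into the same two steps, first passing from $(M_{\mathbb{R}})_e$ to $M_{\mathbb{R}}$ using the finite-index decomposition \eqref{eq:M_extension}, and then handling Mackey parabolic induction from $P_{\mathbb{R}}$ to $G_{\mathbb{R}}$; in both the contour $\mathcal{C}(\mathcal{O},\Gamma,\mathfrak{q},\sigma_c)$ is recognized as a bundle over $G_{\mathbb{R}}/P_{\mathbb{R}}$ whose fiber is the $M_{\mathbb{R}}$-contour shifted by $\lambda_n$ and a flat $\sqrt{-1}(\mathfrak{g}_{\mathbb{R}}/\mathfrak{p}_{\mathbb{R}})^*$-direction. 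The one genuine divergence is that, for the second step, you propose to run the Fubini--Stokes computation along this bundle by hand, whereas the paper invokes the Lemma on p.~377 and the Pfaffian argument on p.~379 of \cite{Ro84} and then verifies that Rossmann's contour $\widetilde{\mathcal{C}}$ coincides with $\mathcal{C}(\mathcal{O},\Gamma,\mathfrak{q},\sigma_c)$ and that the orientations agree. Be aware that the claim you single out as the crux of the by-hand route --- that the flat integration over the $\mathfrak{n}_{\mathfrak{p}}$-directions performs a Fourier inversion and produces the factor $|\det(\mathrm{ad}(\cdot)|_{(\mathfrak{n}_{\mathfrak{p}})_{\mathbb{R}}})|^{-1/2}$, and that the Liouville measure $\omega^{\wedge m}/(2\pi\sqrt{-1})^m m!$ factors compatibly --- is exactly the content of Rossmann's lemma; carrying it out inline reproduces his proof rather than circumventing it, so there is no real economy and one still has to do the orientation bookkeeping the paper does. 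One small correction to the first step: the justification ``because $M_{\mathbb{R}}=M_{\mathbb{R}}^{\lambda_c}(M_{\mathbb{R}})_e$ and $M_{\mathbb{R}}^{\lambda_c}$ fixes $\lambda_c$, $\mathfrak{q}_{\mathfrak{m}}$ and $\Gamma^{M_{\mathbb{R}}}_{\lambda_c}$'' conflates the generated subgroup $M_{\mathbb{R}}^{\lambda}$ (which does not fix $\lambda_c$) with the stabilizer $L_{\mathbb{R}}\cap M_{\mathbb{R}}$; the paper's actual device is that $G(\lambda_n)$ is a connected complex Levi, so the adjoint action of $M$ on $\mathfrak{m}$ is inner, allowing $w\cdot u$ (for $w\in M_{\mathbb{R}}/M_{\mathbb{R}}^{\lambda}$, $u\in U\cap M_e$) to be replaced by some $u'\in U\cap M_e$. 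With that substitution your step~1 goes through as in the paper.
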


\begin{proof} We prove the Lemma in two steps. Define 
\begin{align*}
&\mathcal{C}(\mathcal{O}^{M_{\mathbb{R}}},\Gamma^{M_{\mathbb{R}}},\mathfrak{q}_{\mathfrak{m}},\sigma_c|_{M_e})\\
&:=\left\{m\cdot \lambda_c+u\cdot \rho_{\mathfrak{l}\cap\mathfrak{m}}\mid m\in M_{\mathbb{R}},\ u\in U\cap M,\ m\cdot \mathfrak{q}_{\mathfrak{m}}=u\cdot \mathfrak{q}_{\mathfrak{m}}\right\}.
\end{align*}
First, we will prove that (\ref{eq:elliptic_character_formula}) implies 
\begin{equation}\label{eq:disconnected_elliptic_character_formula}
\theta(\mathcal{O}^{M_{\mathbb{R}}},\Gamma^{M_{\mathbb{R}}})=\mathcal{F}[\mathcal{C}(\mathcal{O}^{M_{\mathbb{R}}},\Gamma^{M_{\mathbb{R}}},\mathfrak{q}_{\mathfrak{m}},\sigma_c|_{M})].
\end{equation}
Then we will prove (\ref{eq:disconnected_elliptic_character_formula}) implies (\ref{eq:general_character_formula}). For the first step, by the hypothesis (\ref{eq:elliptic_character_formula}), we have
$$\theta(w\cdot \mathcal{O}^{(M_{\mathbb{R}})_e},w\cdot \Gamma^{(M_{\mathbb{R}})_e})=\mathcal{F}[\mathcal{C}(w\cdot \mathcal{O}^{(M_{\mathbb{R}})_e},w\cdot \Gamma^{(M_{\mathbb{R}})_e},w\cdot \mathfrak{q}_{\mathfrak{m}}, (w\cdot \sigma_c)|_{M_e})]$$
for $w\in M_{\mathbb{R}}/M_{\mathbb{R}}^{\lambda}$. 
Here, $M_{\mathbb{R}}^{\lambda}$ is the subgroup of $M_{\mathbb{R}}$ generated
 by $(M_{\mathbb{R}})_e$ and $L_{\mathbb{R}}\cap M_{\mathbb{R}}$
 as in Section~\ref{subsec:VZ_construction}.
Further, one checks
\begin{align*}
&\mathcal{C}(w\cdot \mathcal{O}^{(M_{\mathbb{R}})_e},w\cdot \Gamma^{(M_{\mathbb{R}})_e},w\cdot \mathfrak{q}_{\mathfrak{m}},(w\cdot \sigma_c)|_{M_e})\\
&=\{m\cdot w\cdot \lambda_c+u\cdot w\cdot \rho_{\mathfrak{l}\cap\mathfrak{m}}\mid m\in (M_{\mathbb{R}})_e,\ u\in (wUw^{-1})\cap M_e,\ m\cdot w\cdot\mathfrak{q}_{\mathfrak{m}}=u\cdot w\cdot \mathfrak{q}_{\mathfrak{m}} \}\\
&=\{w\cdot m\cdot \lambda_c+w\cdot u\cdot \rho_{\mathfrak{l}\cap\mathfrak{m}}\mid m\in (M_{\mathbb{R}})_e,\ u\in U\cap M_e,\ w\cdot m\cdot\mathfrak{q}_{\mathfrak{m}}=w\cdot u\cdot \mathfrak{q}_{\mathfrak{m}} \}
\end{align*}
for $w\in M_{\mathbb{R}}/M_{\mathbb{R}}^{\lambda}$. Next, observe that $G(\lambda_n)$ is connected since it is a Levi subgroup of $G$.
If $A$ denotes the connected torus with Lie algebra $\mathfrak{a}$, then $MA=G(\lambda_n)$,
 showing that the adjoint action of $M$ is inner.  
Hence for any $w\in M_{\mathbb{R}}/M_{\mathbb{R}}^{\lambda}$ and $u\in U\cap M_e$, there exists
 $u'\in U\cap M_e$ such that
 $w\cdot u\cdot \rho_{\mathfrak{l}\cap\mathfrak{m}}= u' \cdot \rho_{\mathfrak{l}\cap\mathfrak{m}}$
 and $w\cdot u\cdot \mathfrak{q}_{\mathfrak{m}}=u'\cdot \mathfrak{q}_{\mathfrak{m}}$.
Therefore, we deduce
\begin{align*}
&\mathcal{C}(w\cdot \mathcal{O}^{(M_{\mathbb{R}})_e},w\cdot \Gamma^{(M_{\mathbb{R}})_e},w\cdot \mathfrak{q}_{\mathfrak{m}},(w\cdot \sigma_c)|_{M_e})\\
&=\{w\cdot m\cdot \lambda_c+u\cdot \rho_{\mathfrak{l}\cap\mathfrak{m}}\mid m\in (M_{\mathbb{R}})_e,\ u\in U\cap M_e,\ w\cdot m\cdot\mathfrak{q}_{\mathfrak{m}}=u\cdot \mathfrak{q}_{\mathfrak{m}} \}
\end{align*}
for $w\in M_{\mathbb{R}}/M_{\mathbb{R}}^{\lambda}$, which immediately implies 
\begin{equation}\label{eq:contour_M_equality}
\mathcal{C}(\mathcal{O}^{M_{\mathbb{R}}},\Gamma^{M_{\mathbb{R}}},\mathfrak{q}_{\mathfrak{m}},\sigma_c|_{M})=\sum_{w\in M_{\mathbb{R}}/M_{\mathbb{R}}^{\lambda}}\mathcal{C}(w\cdot \mathcal{O}^{(M_{\mathbb{R}})_e},w\cdot \Gamma^{(M_{\mathbb{R}})_e},w\cdot \mathfrak{q}_{\mathfrak{m}},(w\cdot \sigma_c)|_{M_e}).
\end{equation}

Next, recall from (\ref{eq:M_extension}), we have
$$\pi(\mathcal{O}^{M_{\mathbb{R}}},\Gamma^{M_{\mathbb{R}}})|_{(M_{\mathbb{R}})_e}\simeq \bigoplus_{w\in M_{\mathbb{R}}/M_{\mathbb{R}}^{\lambda}} \pi(w\cdot \mathcal{O}^{(M_{\mathbb{R}})_e},w\cdot \Gamma^{(M_{\mathbb{R}})_e}).$$
In particular,
\begin{equation}\label{eq:character_M_equation}
\theta(\mathcal{O}^{M_{\mathbb{R}}},\Gamma^{M_{\mathbb{R}}})=\sum_{w\in M_{\mathbb{R}}/M_{\mathbb{R}}^{\lambda}} \theta(w\cdot \mathcal{O}^{(M_{\mathbb{R}})_e},w\cdot \Gamma^{(M_{\mathbb{R}})_e}).
\end{equation}
Combining (\ref{eq:contour_M_equality}) and (\ref{eq:character_M_equation}), we obtain (\ref{eq:disconnected_elliptic_character_formula}).

For the second step, recall from Section \ref{subsec:parabolic_induction}
\[\pi(\mathcal{O},\Gamma)
=\op{Ind}_{P_{\mathbb{R}}}^{G_{\mathbb{R}}}
(\pi(\mathcal{O}^{M_{\mathbb{R}}},\Gamma^{M_{\mathbb{R}}})\boxtimes e^{\lambda_n})
\]
is obtained via (unitary) parabolic induction. We have assumed that $\mathcal{O}$ is in the good range, which implies that the infinitesimal character of $\pi(\mathcal{O}^{M_{\mathbb{R}}},\Gamma^{M_{\mathbb{R}}})$ is regular. Moreover, $\mathcal{C}(\mathcal{O}^{M_{\mathbb{R}}},\Gamma^{M_{\mathbb{R}}},\mathfrak{q}_{\mathfrak{m}},\sigma_c|_{M})$ is an admissible contour.
Hence, we may utilize the Lemma on page 377 of \cite{Ro84} together with the assumption (\ref{eq:elliptic_character_formula}) and the fact that $\mathcal{O}$ in the good range implies that $\pi(\mathcal{O},\Gamma)$ has regular infinitesimal character to deduce that $\theta(\mathcal{O},\Gamma)$ can be written as the Fourier transform of the contour
\[\widetilde{\mathcal{C}}(\mathcal{O},\Gamma,\mathfrak{q},\sigma_c)
:=K_{\mathbb{R}}\cdot
 [\mathcal{C}(\mathcal{O}^{M_{\mathbb{R}}},\Gamma^{M_{\mathbb{R}}},\mathfrak{q}_{\mathfrak{m}},\sigma_c|_{M})+\lambda_n+\sqrt{-1}(\mathfrak{g}_{\mathbb{R}}/\mathfrak{p}_{\mathbb{R}})^*].\]

Let us pause for a moment to discuss the orientation on
 $\widetilde{\mathcal{C}}(\mathcal{O},\Gamma,\mathfrak{q},\sigma_c)$. 
The orientation on 
$\mathcal{C}(\mathcal{O}^{M_{\mathbb{R}}},\Gamma^{M_{\mathbb{R}}},
 \mathfrak{q}_{\mathfrak{m}},\sigma_c|_{M})$
 is defined as in Section \ref{subsec:Rossmann_characters}.
We have a pairing
\begin{equation*}
\langle \cdot , \cdot \rangle :
\mathfrak{k}_{\mathbb{R}}
 /(\mathfrak{k}_{\mathbb{R}}\cap \mathfrak{m}_{\mathbb{R}})
\otimes (\mathfrak{g}_{\mathbb{R}}/\mathfrak{p}_{\mathbb{R}})^*
\rightarrow \mathbb{R}
\end{equation*}
by utilizing the isomorphism
 $\mathfrak{k}_{\mathbb{R}}/(\mathfrak{k}_{\mathbb{R}}\cap \mathfrak{m}_{\mathbb{R}})
 \simeq \mathfrak{g}_{\mathbb{R}}/\mathfrak{p}_{\mathbb{R}}$ 
 and then we define a symplectic form $\gamma$ on
$\mathfrak{k}_{\mathbb{R}}/(\mathfrak{k}_{\mathbb{R}}\cap \mathfrak{m}_{\mathbb{R}}) \oplus (\mathfrak{g}_{\mathbb{R}}/\mathfrak{p}_{\mathbb{R}})^*$ by
\begin{align*}
&\gamma(\mathfrak{k}_{\mathbb{R}}/(\mathfrak{k}_{\mathbb{R}}\cap \mathfrak{m}_{\mathbb{R}}),
 \mathfrak{k}_{\mathbb{R}}/(\mathfrak{k}_{\mathbb{R}}\cap \mathfrak{m}_{\mathbb{R}}))=0,\qquad
\gamma((\mathfrak{g}_{\mathbb{R}}/\mathfrak{p}_{\mathbb{R}})^*,
 (\mathfrak{g}_{\mathbb{R}}/\mathfrak{p}_{\mathbb{R}})^*)=0,\\
&\gamma(X,Y)=\langle X, Y\rangle
 \text{ for } X\in \mathfrak{k}_{\mathbb{R}}/(\mathfrak{k}_{\mathbb{R}}\cap \mathfrak{m}_{\mathbb{R}}),\ Y\in (\mathfrak{g}_{\mathbb{R}}/\mathfrak{p}_{\mathbb{R}})^*.
\end{align*}
If $d:=\dim \mathfrak{g}_{\mathbb{R}}/\mathfrak{p}_{\mathbb{R}}$,
 then $\bigwedge^d \gamma$ gives an orientation on 
 $\mathfrak{k}_{\mathbb{R}}/(\mathfrak{k}_{\mathbb{R}}\cap \mathfrak{m}_{\mathbb{R}})
 \times (\mathfrak{g}_{\mathbb{R}}/\mathfrak{p}_{\mathbb{R}})^*$.
Combining these two, we obtain an orientation on 
\[\widetilde{\mathcal{C}}(\mathcal{O},\Gamma,\mathfrak{q},\sigma_c)
 = K_{\mathbb{R}}\times_{K_{\mathbb{R}}\cap M_{\mathbb{R}}}
 [\mathcal{C}(\mathcal{O}^{M_{\mathbb{R}}},\Gamma^{M_{\mathbb{R}}},
  \mathfrak{q}_{\mathfrak{m}},\sigma_c|_{M})
 +\lambda_n+\sqrt{-1}(\mathfrak{g}_{\mathbb{R}}/\mathfrak{p}_{\mathbb{R}})^*]\]
 and with respect to this orientation
\[\mathcal{F}[\widetilde{\mathcal{C}}(\mathcal{O},\Gamma,\mathfrak{q},\sigma_c)]
=\theta(\mathcal{O},\Gamma)\]
 by the Lemma on page 377
 and the Pfaffian argument on page 379 of \cite{Ro84}.

Next, recall 
$$\mathcal{C}(\mathcal{O}^{M_{\mathbb{R}}},\Gamma^{M_{\mathbb{R}}},\mathfrak{q}_{\mathfrak{m}},\sigma_c|_{M})=\left\{m\cdot \lambda_c+u\cdot \rho_{\mathfrak{l}\cap\mathfrak{m}}\mid m\cdot \mathfrak{q}_{\mathfrak{m}}=u\cdot \mathfrak{q}_{\mathfrak{m}},\ m\in M_{\mathbb{R}},\ u\in U\cap M\right\}.$$
Note $A_{\mathbb{R}}(N_P)_{\mathbb{R}}\cdot \lambda=\lambda+\sqrt{-1}(\mathfrak{g}_{\mathbb{R}}/\mathfrak{p}_{\mathbb{R}})^*$. 
Further, if $m\in M_{\mathbb{R}}$, then $m\cdot [\lambda+\sqrt{-1}(\mathfrak{g}_{\mathbb{R}}/\mathfrak{p}_{\mathbb{R}})^*]=m\cdot \lambda+\sqrt{-1}(\mathfrak{g}_{\mathbb{R}}/\mathfrak{p}_{\mathbb{R}})^*$.
Hence, 
\[
\widetilde{\mathcal{C}}(\mathcal{O},\Gamma,\mathfrak{q},\sigma_c)
=\bigcup_{\substack{k\in K_{\mathbb{R}},\ m\in M_{\mathbb{R}},\ u\in U\cap M\\ m\cdot \mathfrak{q}_{\mathfrak{m}}=u\cdot \mathfrak{q}_{\mathfrak{m}}}}k\cdot [m\cdot A_{\mathbb{R}}(N_P)_{\mathbb{R}}\cdot \lambda+u\cdot \rho_{\mathfrak{l}}].
\]
Note $m\cdot \mathfrak{q}_{\mathfrak{m}}=u\cdot \mathfrak{q}_{\mathfrak{m}}$ implies $u^{-1}m\in N_M(\mathfrak{q}_{\mathfrak{m}})=Q_M$ the corresponding parabolic subgroup. Since $Q_M\subset Q:=N_G(\mathfrak{q}_{\lambda})$, we note $(kmn)\cdot \mathfrak{q}_{\lambda}=(ku)\cdot \mathfrak{q}_{\lambda}$ whenever $k\in K_{\mathbb{R}}$, $m\in M_{\mathbb{R}}$, $u\in U\cap M$, $n\in A_{\mathbb{R}}(N_P)_{\mathbb{R}}$, and $m\cdot \mathfrak{q}_{\mathfrak{m}}=u\cdot \mathfrak{q}_{\mathfrak{m}}$. Further, any $g\in G_{\mathbb{R}}$ can be written as $g=kmn$ with $k\in K_{\mathbb{R}}$, $m\in M_{\mathbb{R}}$, and $n\in A_{\mathbb{R}}(N_P)_{\mathbb{R}}$. Finally, if $g\cdot \mathfrak{q}_{\lambda}=u\cdot \mathfrak{q}_{\lambda}=u_0\cdot \mathfrak{q}_{\lambda}$ with $g\in G_{\mathbb{R}}$ and $u,u_0\in U$, then $u_0\in u(U\cap L)$. In particular, every such $u_0$ is in $K_{\mathbb{R}}\cdot (U\cap M)$. Putting all of these remarks together, one deduces
$$\widetilde{\mathcal{C}}(\mathcal{O},\Gamma,\mathfrak{q},\sigma_c)=\bigcup_{\substack{g\in G_{\mathbb{R}},\ u\in U\\ g \cdot \mathfrak{q}_{\lambda}=u\cdot \mathfrak{q}_{\lambda}}} \left(g\cdot \lambda+u\cdot \rho_{\mathfrak{l}}\right)=\mathcal{C}(\mathcal{O},\Gamma,\mathfrak{q},\sigma_c).$$

To complete the argument, we must check that the orientation defined above on $\widetilde{\mathcal{C}}(\mathcal{O},\Gamma,\mathfrak{q},\sigma_c)$ agrees with the orientation on $\mathcal{C}(\mathcal{O},\Gamma,\mathfrak{q},\sigma_c)$ defined in Section \ref{subsec:Rossmann_characters}. To check this, observe that the projection 
$$\varpi_{M_{\mathbb{R}}}\colon \mathcal{C}(\mathcal{O}^{M_{\mathbb{R}}},\Gamma^{M_{\mathbb{R}}},\mathfrak{q}_{\mathfrak{m}},\sigma_c|_M)\rightarrow \mathcal{O}^{M_{\mathbb{R}}}$$
gives rise to a projection
\[\begin{CD}
\widetilde{\mathcal{C}}(\mathcal{O},\Gamma,\mathfrak{q},\sigma_c)\simeq K_{\mathbb{R}}\times_{K_{\mathbb{R}}\cap M_{\mathbb{R}}} [\mathcal{C}(\mathcal{O}^{M_{\mathbb{R}}},\Gamma^{M_{\mathbb{R}}},\mathfrak{q}_{\mathfrak{m}},\sigma_c|_M)+\lambda_n+\sqrt{-1}(\mathfrak{g}_{\mathbb{R}}/\mathfrak{p}_{\mathbb{R}})^*]\\
@V\op{id}\times \varpi_{M_{\mathbb{R}}}\times \op{id} VV\\
K_{\mathbb{R}}\times_{K_{\mathbb{R}}\cap M_{\mathbb{R}}} [\mathcal{O}^{M_{\mathbb{R}}}+\lambda_n+\sqrt{-1}(\mathfrak{g}_{\mathbb{R}}/\mathfrak{p}_{\mathbb{R}})^*]\simeq \mathcal{O}.
\end{CD}\]
In fact, one sees that after composing with the isomorphism $\widetilde{\mathcal{C}}(\mathcal{O},\Gamma,\mathfrak{q},\sigma_c)\simeq \mathcal{C}(\mathcal{O},\Gamma,\mathfrak{q},\sigma_c)$, this is precisely the projection $\varpi\colon \mathcal{C}(\mathcal{O},\Gamma,\mathfrak{q},\sigma_c)\rightarrow \mathcal{O}$ that was utilized in Section \ref{subsec:Rossmann_characters} to defined the orientation on 
$\mathcal{C}(\mathcal{O},\Gamma,\mathfrak{q},\sigma_c)$. Utilizing $\op{id}\times \varpi\times \op{id}$, we observe that the orientation on $\widetilde{\mathcal{C}}(\mathcal{O},\Gamma,\mathfrak{q},\sigma_c)$ is induced from the orientation defined on the fibers of 
$$\varpi_{M_{\mathbb{R}}}\colon \mathcal{C}(\mathcal{O}^{M_{\mathbb{R}}},\Gamma^{M_{\mathbb{R}}},\mathfrak{q}_{\mathfrak{m}},\sigma_c|_M)\longrightarrow \mathcal{O}^{M_{\mathbb{R}}}$$
and the orientation on the base $K_{\mathbb{R}}\times_{K_{\mathbb{R}}\cap M_{\mathbb{R}}} [\mathcal{O}^{M_{\mathbb{R}}}+\lambda_n+\sqrt{-1}(\mathfrak{g}_{\mathbb{R}}/\mathfrak{p}_{\mathbb{R}})^*]\simeq \mathcal{O}$, which in turn arises from the orientation on the coadjoint orbit $\mathcal{O}^{M_{\mathbb{R}}}$ together with the orientation on $\mathfrak{k}_{\mathbb{R}}/(\mathfrak{k}_{\mathbb{R}}\cap \mathfrak{m}_{\mathbb{R}})
 \times (\mathfrak{g}_{\mathbb{R}}/\mathfrak{p}_{\mathbb{R}})^*$
 given above. 
The fibers of $\varpi$ and $\varpi_{M_{\mathbb{R}}}$ are naturally isomorphic and the corresponding orientations were defined analogously. Hence, it is enough to check that orientation on
\[K_{\mathbb{R}}\times_{K_{\mathbb{R}}\cap M_{\mathbb{R}}} [\mathcal{O}^{M_{\mathbb{R}}}+\lambda_n+\sqrt{-1}(\mathfrak{g}_{\mathbb{R}}/\mathfrak{p}_{\mathbb{R}})^*]\]
defined by the orientations on $\mathcal{O}^{M_{\mathbb{R}}}$ and $\mathfrak{k}_{\mathbb{R}}/(\mathfrak{k}_{\mathbb{R}}\cap \mathfrak{m}_{\mathbb{R}})
 \times (\mathfrak{g}_{\mathbb{R}}/\mathfrak{p}_{\mathbb{R}})^*$ agrees with the orientation on the coadjoint orbit $\mathcal{O}$ defined in Section \ref{subsec:Rossmann_characters}. In fact, it follows from the argument on page 379 of \cite{Ro84} that the canonical top-dimensional form on $\mathcal{O}$ is equal to the top-dimensional form on $K_{\mathbb{R}}\times_{K_{\mathbb{R}}\cap M_{\mathbb{R}}} [\mathcal{O}^{M_{\mathbb{R}}}+\lambda_n+\sqrt{-1}(\mathfrak{g}_{\mathbb{R}}/\mathfrak{p}_{\mathbb{R}})^*]$ induced from the forms on $\mathcal{O}^{M_{\mathbb{R}}}$ and $\mathfrak{k}_{\mathbb{R}}/(\mathfrak{k}_{\mathbb{R}}\cap \mathfrak{m}_{\mathbb{R}})
 \times (\mathfrak{g}_{\mathbb{R}}/\mathfrak{p}_{\mathbb{R}})^*$. The lemma follows.
\end{proof}

\section{The Kashiwara-Schmid Correspondence and the Character Formula of Schmid-Vilonen}
\label{sec:KSV}

In this section, we recall a correspondence conjectured by Kashiwara \cite{Kas87} and proved by Kashiwara-Schmid \cite{KS94} between a bounded equivariant derived category of sheaves of complex vector spaces on the flag variety and a bounded derived category of nuclear Fr\'{e}chet $G_{\mathbb{R}}$-representations. By comparing a result of Wong (see Section \ref{subsec:max_globalization}) with this correspondence, we will identify an object of the bounded equivariant derived category that corresponds to our representation $\pi(\mathcal{O}^{(M_{\mathbb{R}})_e},\Gamma^{(M_{\mathbb{R}})_e})$ studied in Section \ref{sec:representations}. Then we recall the character formula of Schmid-Vilonen \cite{SV98} which relates the character of the representation to the characteristic cycle of this sheaf.

\subsection{The Bounded Equivariant Derived Category}
\label{subsec:equivariant_derived}

Before recalling the notion of an equivariant derived category, we recall the notion of an equivariant sheaf. Suppose $X$ is a smooth manifold with a continuous action of a Lie group $G$. Intuitively, an equivariant sheaf on $X$ is a sheaf $\mathcal{F}$ together with a collection of sheaf isomorphisms $l_g\colon \mathcal{F}(\cdot)\rightarrow \mathcal{F}(g\cdot)$ that continuously vary in $g\in G$ satisfying $l_{gh}=l_gl_h$, $l_e=\op{Id}$, and for which sections of $\mathcal{F}$ are locally $l_g$-invariant for $g\in G$ near the identity. (For a more precise definition, see page 2 of \cite{BL94}).

We say $X$ is a \emph{free} $G$-space if the stabilizer of every point $x\in X$ is the identity $e\in G$ and if $q\colon X\rightarrow \overline{X}:=X/G$ is a locally trivial $G$-equivariant fibration with fibers equivariantly isomorphic to $G$. The second condition means that for every $x\in X$, there exists a $G$-invariant, open subset $x\in U\subset X$ and a $G$-equivariant isomorphism $$G\times \overline{U}\stackrel{\sim}{\longrightarrow} U$$ 
where $\overline{U}:=U/G$. 

Bernstein and Luntz point out (see Lemma 0.3 on page 3 of \cite{BL94}) that the inverse image functor gives a natural correspondence between sheaves of $\mathbb{C}$-vector spaces on $\overline{X}=X/G$ and $G$-equivariant sheaves of $\mathbb{C}$-vector spaces on $X$.
Let $\op{Sh}(\overline{X})$ denote the category of sheaves of complex vector spaces on $\overline{X}$, and let $\op{Sh}_G(X)$ denote the category of $G$-equivariant sheaves on $X$. If $X$ is a free space, then we may identify $D^b(\op{Sh}_G(X))$ with $D^b(\op{Sh}(\overline{X}))$. However, if $X$ is a $G$-space that is not free, then we may not have such a correspondence between equivariant sheaves on $X$ and sheaves on $\overline{X}:=X/G$. 

Let $I=[a,b]\subset \mathbb{Z}$ be an interval. If $X$ is any topological space, denote by $D^I(X)$ the full subcategory of the derived category of sheaves of complex vector spaces on $X$ consisting of objects $M$ satisfying $H^i(M)=0$ for all $i\notin I$. If $X$ is a smooth manifold with a smooth $G$-action and $Y$ is a smooth manifold and a free $G$-space, then $X\times Y$ is a free $G$-space. Consider the projections 
$$p\colon X\times Y\rightarrow X,\quad q\colon X\times Y\rightarrow \overline{X\times Y}.$$ 
Motivated by the above situation, Bernstein and Luntz make the following definition (see Section 2 of \cite{BL94}). The category $D_G^I(X,X\times Y)$ is the collection of triples $(\mathcal{F}_X,\overline{\mathcal{F}},\beta)$ for which $\mathcal{F}_X\in D^I(X)$, $\overline{\mathcal{F}}\in D^I(\overline{X\times Y})$, and 
$$\beta\colon p^{-1}\mathcal{F}_X\stackrel{\sim}\longrightarrow q^{-1}\overline{\mathcal{F}}.$$
A morphism $\gamma=(\gamma_X,\overline{\gamma})$ between $(\mathcal{F}_X,\overline{\mathcal{F}},\beta)$ and $(\mathcal{G}_X,\overline{\mathcal{G}},\alpha)$ is a pair of maps
$$\gamma_X\colon \mathcal{F}_X\rightarrow \mathcal{G}_X,\quad \overline{\gamma}\colon \overline{\mathcal{F}}\rightarrow \overline{\mathcal{G}}$$
satisfying
$$\alpha \circ p^{-1}\gamma_X=q^{-1}\overline{\gamma}\circ \beta.$$
There is a natural functor
$$D^I(\op{Sh}_G(X))\rightarrow D^I_G(X,X\times Y)$$
by $\mathcal{F}\mapsto (\mathcal{F},q_*^Gp^{-1}\mathcal{F},\beta)$ where $\beta$ is the natural isomorphism. A topological space $Y$ is called \emph{n-acyclic} if $H^0(Y,\mathbb{C})=\mathbb{C}$, $H^i(Y,\mathbb{C})=0$ if $i=1,\ldots,n$, and $H^i(Y,\mathbb{C})$ is finite-dimensional for all $i$. Bernstein and Luntz show that if $X$ is a free $G$-space, and $Y$ is $n$-acyclic with $n\geq |I|$, then the above map yields an equivalence (Proposition 2.1.1 on page 18 of \cite{BL94})
$$D^I(\op{Sh}_G(X))\simeq D^I_G(X,X\times Y).$$
Even when $X$ is not free, the categories $D^I_G(X,X\times Y)$ are naturally equivalent for all $Y$ $n$-acyclic. 

Let $G_{\mathbb{R}}$ be a component group of a real, linear algebraic group with connected complexification $G$. Then there exists an $n$-acyclic complex algebraic $G$-space $Y$ for every $n$ that is also a free $G_{\mathbb{R}}$-space (see Lemma 3.1 on page 34 of \cite{BL94}). We thus define $D^I_{G_{\mathbb{R}}}(X)$ to be the category $D^I_{G_\mathbb{R}}(X, X\times Y)$ for $Y$ $n$-acyclic with $n\geq |I|$, which is independent of $Y$ up to equivalence. Taking the direct limit over the indices $I$ yields the equivariant bounded derived category of sheaves of complex vector spaces on $X$, $D_{G_{\mathbb{R}}}^b(X)$ (see page 19 of \cite{BL94}; also see the descriptions on page 41 of \cite{Kas08} and pages 12-13 of \cite{KS94}).

There exists a functor
$$D^b(\op{Sh}_{G_{\mathbb{R}}}(X))\rightarrow D^b_{G_{\mathbb{R}}}(X).$$
To define it, take $\mathcal{F}\in D^b(\op{Sh}_{G_{\mathbb{R}}}(X))$, find $I\subset \mathbb{Z}$ with $\mathcal{F}\in D^I(\op{Sh}_{G_{\mathbb{R}}}(X))$, and for every $J\supset I$, choose $Y$ a $|J|$-acyclic complex algebraic $G$-space that is a free $G_{\mathbb{R}}$-space, apply the map
$$D^J(\op{Sh}_{G_{\mathbb{R}}}(X))\rightarrow D^J_{G_{\mathbb{R}}}(X,X\times Y),$$
and then take the limit to obtain an object in $D_{G_{\mathbb{R}}}^b(X)$. Moreover, there is the forgetful functor
$$D^b_{G_{\mathbb{R}}}(X)\rightarrow D^b(X)$$
obtained by taking an object of $D^b_{G_{\mathbb{R}}}(\op{Sh}(X))$, representing it by 
$$(\mathcal{F}_X,\overline{\mathcal{F}},\beta)\in D^I_{G_{\mathbb{R}}}(X,X\times Y)$$
 for some $I$, and then mapping it to $\mathcal{F}_X$.

If $f\colon X\rightarrow Y$ is a real algebraic, $G_{\mathbb{R}}$-equivariant map between real algebraic $G_{\mathbb{R}}$-spaces, then we have functors (see page 35 of \cite{BL94}) 
$$f_*, f_!\colon D^b_{G_{\mathbb{R}}}(X)\rightarrow D^b_{G_{\mathbb{R}}}(Y),\quad f^*,f^!\colon D^b_{G_{\mathbb{R}}}(Y)\rightarrow D^b_{G_{\mathbb{R}}}(X).$$
We work with $D^b_{G_{\mathbb{R}}}(X)$ instead of $D^b(\op{Sh}_{G_{\mathbb{R}}}(X))$ due to the existence of these functors.
All of these functors commute with the forgetful functor (see pages 35-36 of \cite{BL94}). In addition, if $H_{\mathbb{R}}\subset G_{\mathbb{R}}$ is a real, linear algebraic normal subgroup acting freely on $X$, then there is a natural equivalence of categories (see pages 26-27 of \cite{BL94})
\begin{equation}\label{eq:equivariant_equivalence}
D^b_{G_{\mathbb{R}}}(X)\stackrel{\sim}\longrightarrow D^b_{G_{\mathbb{R}}/H_{\mathbb{R}}}(H_{\mathbb{R}}\backslash X).
\end{equation}
\bigskip

Let $G$ be a connected, complex reductive algebraic group with Lie algebra $\mathfrak{g}$, and let $X$ be the flag variety of Borel subalgebras of $\mathfrak{g}$. A sheaf $\mathcal{F}$ on $X$ is $\mathbb{R}$-\emph{constructible} if 
\begin{enumerate}[(a)]
\item $\mathcal{F}_x$ is finite-dimensional for all $x\in X$
\item There exists a finite family of locally closed subsets $\{Z_{\alpha}\}$ of $X$ such that 
\begin{enumerate}[(i)]
\item $X=\sqcup_{\alpha} Z_{\alpha}$
\item $Z_{\alpha}$ is a subanalytic subset of $X$ (see page 327 of \cite{KS02} for a definition of subanalytic)
\item $\mathcal{F}|_{Z_{\alpha}}$ is a locally constant sheaf of finite rank on $Z_{\alpha}$ and every $\alpha$.  
\end{enumerate}
\end{enumerate}
The $\{Z_{\alpha}\}$ are called a \emph{stratification} of $X$. A natural stratification arises by taking a component group of  a real form $G_{\mathbb{R}}\subset G$ and letting $\{Z_{\alpha}\}$ denote the $G_{\mathbb{R}}$-orbits on $X$. In practice, this will be the only stratification considered in the sequel.

An object $\mathcal{F}\in D^b(X)$ is $\mathbb{R}$-constructible if the cohomology sheaves $H^j(\mathcal{F})$ are all $\mathbb{R}$-constructible. An object $\mathcal{F}\in D^b_{G_{\mathbb{R}}}(X)$ is $\mathbb{R}$-constructible if the image of the forgetful functor $\mathcal{F}_X\in D^b(X)$ is $\mathbb{R}$-constructible. We denote by $D^b_{G_{\mathbb{R}}, \mathbb{R}\mathchar`-c}(X)$ the full subcategory of $D^b_{G_{\mathbb{R}}}(X)$ consisting of $\mathbb{R}$-constructible objects.

\subsection{The Kashiwara-Schmid Correspondence}
\label{subsec:Kashiwara}

For each $x\in X$, let $B_x$ (resp.\ $\mathfrak{b}_x$) denote the corresponding Borel subgroup (resp.\ Borel subalgebra). Let $C$ (resp.\ $\mathfrak{c}$) denote the universal Cartan subgroup (resp.\ Cartan subalgebra) of $G$ (resp.\  $\mathfrak{g}$). In particular, there are natural isomorphisms $C\simeq B_x/[B_x,B_x]$ and $\mathfrak{c}\simeq \mathfrak{b}_x/[\mathfrak{b}_x,\mathfrak{b}_x]$ for every $x\in X$. For each $e^{\mu} \in \op{Hom}_{\mathbb{C}^{\times}}(C,\mathbb{C}^{\times})$ with differential $\mu\in \mathfrak{c}^*$, define $\mathcal{L}_{\mu}\rightarrow X$ to be the holomorphic line bundle on $X$ on which the action of the isotropy group $B_x$ on the fiber over $x\in X$ factors through $B_x/[B_x,B_x]\simeq C$ where it acts by $e^{\mu}$. Let $\Delta$ denote the collection of roots of $\mathfrak{c}$, and let $\Delta^+$ denote the collection of positive roots of $\mathfrak{c}$ determined by the root spaces in $[\mathfrak{b}_x,\mathfrak{b}_x]$. Let
$$\rho=\frac{1}{2}\sum_{\alpha\in \Delta^+}\alpha.$$
We call $\mu\in \mathfrak{c}^*$ \emph{integral} if it is the differential of a character in $\op{Hom}(C,\mathbb{C}^{\times})$. If $\mu\in \mathfrak{c}^*$ and $\mu+\rho$ is integral, then we define $\mathcal{O}_X(\mu)$ to be the sheaf of holomorphic sections of the holomorphic line bundle $\mathcal{L}_{\mu+\rho}\rightarrow X$.

Let $\mathbf{FN}_{G_{\mathbb{R}}}$ denote the category of Fr\'{e}chet, nuclear spaces with continuous $G_{\mathbb{R}}$-actions. This category is quasi-abelian and one may form the bounded, derived category $D^b(\mathbf{FN}_{G_{\mathbb{R}}})$ of $\mathbf{FN}_{G_{\mathbb{R}}}$ in much the same way as one forms the bounded, derived category of an abelian category (see Section 2 of \cite{Kas08} for details).  If $\mu\in \mathfrak{h}^*$ and $\mu+\rho$ is integral, then Kashiwara-Schmid give a functor
$$\mathbf{R}\operatorname{Hom}^{\text{top}}_{\mathbb{C}_X}(\ \cdot\ ,\mathcal{O}_X(\mu))\colon D^b_{G_{\mathbb{R}},\mathbb{R}\mathchar`-c}(X)\longrightarrow D^b(\mathbf{FN}_{G_{\mathbb{R}}}).$$
The complexes in $D^b(\mathbf{FN}_{G_{\mathbb{R}}})$ in the image of this functor are strict, meaning that the images of the differential maps are closed. The cohomology of this complex at degree $p$ is then denoted 
$$\op{Ext}^p(\ \cdot \ ,\mathcal{O}_X(\mu)).$$
Forgetting the topology and the $G_{\mathbb{R}}$-action, these functors are the usual $\mathbf{R}\operatorname{Hom}$ and $\op{Ext}^p$ functors composed with the forgetful functor from $D^b_{G_{\mathbb{R}}, \mathbb{R}\mathchar`-c}(X)$ into $D^b(X)$. To make these vector spaces into topological vector spaces we replace $\mathcal{O}_X(\mu)$ by $C^{\infty}$ Dolbeault complex (see Section 5 of \cite{KS94} and Sections 5, 9 of \cite{Kas08}). The equivariance of sheaves gives continuous $G_{\mathbb{R}}$-actions on vector spaces. We refer the reader to Kashiwara-Schmid \cite{KS94} and Kashiwara \cite{Kas08} for the technical details. 
In fact, Kashiwara-Schmid treat non-integral parameters using twisted sheaves, but we will avoid this more general case in our paper.
\bigskip

Let $(\mathcal{O},\Gamma)$ be an elliptic orbital parameter with $\mathcal{O}$ in the good range. We wish to relate the maximal globalization $\pi^{-\omega}(\mathcal{O},\Gamma)$ to the Kashiwara-Schmid correspondence above. To do this, we follow Wong \cite{Won99}. 
Adopting the notation of Section \ref{subsec:parabolics} and Section \ref{subsec:VZ_construction}, fix a maximally real admissible polarization $\{\mathfrak{q}_{\lambda}\}$ of $\mathcal{O}$. Let $S'$ be the $G_{\mathbb{R}}$-orbit consisting of all parabolics $\mathfrak{q}_{\lambda}$ with $\lambda\in \mathcal{O}$, and let $Y$ be the partial flag variety of all complex parabolics $G$-conjugate to some $\mathfrak{q}_{\lambda}$. For every Borel subalgebra $\mathfrak{b}\subset \mathfrak{g}$, it follows from standard arguments regarding root data and Weyl groups that there exists a unique parabolic $\mathfrak{q}\in Y$ containing $\mathfrak{b}$. The corresponding map $\mathfrak{b}\mapsto \mathfrak{q}$ yields a natural, $G$-equivariant fibration
\[\varpi\colon X\rightarrow Y.\]
We may put $S=\varpi^{-1}(S')$. More concretely, $S$ is the collection of Borel subalgebras $\mathfrak{b}\subset \mathfrak{g}$ such that $\mathfrak{b}\subset \mathfrak{q}_{\lambda}$ for some $\lambda\in \mathcal{O}$. Fix $\lambda\in \mathcal{O}$, let $\mathfrak{l}_{\mathbb{R}}=\mathfrak{g}_{\mathbb{R}}(\lambda)$, and find a fundamental real Cartan subalgebra $\mathfrak{h}_{\mathbb{R}}\subset \mathfrak{l}_{\mathbb{R}}\subset \mathfrak{g}_{\mathbb{R}}$ such that $\lambda\in \sqrt{-1}\mathfrak{h}_{\mathbb{R}}^*$. Let $L:=G(\lambda)$, $L_{\mathbb{R}}:=G_{\mathbb{R}}(\lambda)$, and let $X_L$ denote the flag variety for $L$. We must make an integrality assumption on the elliptic orbital parameter $(\mathcal{O},\Gamma)$. Let $\mathfrak{h}:=\mathfrak{h}_{\mathbb{R}}\otimes_{\mathbb{R}}\mathbb{C}$ be the complexification of $\mathfrak{h}_{\mathbb{R}}$ and assume 
\begin{equation}\label{eq:integral}
\lambda+\rho(\mathfrak{n})\ \text{is\ integral.}
\end{equation}

Next, to relate $\pi^{-\omega}(\mathcal{O},\Gamma)$ to the Kashiwara-Schmid correspondence, we first recall that Wong defines a map (see page 11 of \cite{Won99})
$$\op{Ind}_{L_{\mathbb{R}}}^{G_{\mathbb{R}}}\colon D^b_{L_{\mathbb{R}}, \mathbb{R}\mathchar`-c}(X_L)\rightarrow D^b_{G_{\mathbb{R}}, \mathbb{R}\mathchar`-c}(X)$$
in the following way. The fiber $\varpi^{-1}(\mathfrak{q}_{\lambda})$ can naturally be identified with $X_L$. This gives us an inclusion 
$$\iota\colon X_L\hookrightarrow S$$
and we have a corresponding functor 
\begin{equation}\label{eq:Wong_functor_1}
\iota_*\colon D^b_{L_{\mathbb{R}},\mathbb{R}\mathchar`-c}(X_L)\rightarrow D^b_{L_{\mathbb{R}},\mathbb{R}\mathchar`-c}(S).
\end{equation}
Next, we wish to map $L_{\mathbb{R}}$-equivariant objects on $S$ to $G_{\mathbb{R}}$-equivariant objects on $S$. This is done by considering the following diagram
$$S\stackrel{pr}\leftarrow G_{\mathbb{R}}\times S\stackrel{q}\rightarrow G_{\mathbb{R}}\times_{L_{\mathbb{R}}} S\stackrel{a}\rightarrow S$$ 
where $pr$ denotes the projection onto the second factor, $q$ denotes the quotient map, and $a$ denotes the action map.
We let $L_{\mathbb{R}}$ act on $S$ by letting $L_{\mathbb{R}}\subset G_{\mathbb{R}}$ act in the usual way on Borel subalgebras of $\mathfrak{g}$, and we let $G_{\mathbb{R}}\times L_{\mathbb{R}}$ act on $G_{\mathbb{R}}\times S$ by 
$$(g,l)\cdot (x,\mathfrak{b}):=(gxl^{-1},\op{Ad}(l)\mathfrak{b}).$$
This action allows us to identify $$S\simeq (G_{\mathbb{R}}\times S)/G_{\mathbb{R}}$$
and utilize (\ref{eq:equivariant_equivalence}) to give an equivalence of categories
\begin{equation}\label{eq:Wong_functor_2.1}
D^b_{L_{\mathbb{R}}, \mathbb{R}\mathchar`-c}(S)\stackrel{\sim}\rightarrow D^b_{G_{\mathbb{R}}\times L_{\mathbb{R}}, \mathbb{R}\mathchar`-c}(G_{\mathbb{R}}\times S).
\end{equation}
Next, 
$$G_{\mathbb{R}}\times_{L_{\mathbb{R}}}S:= (G_{\mathbb{R}}\times S)/L_{\mathbb{R}}$$
which induces an equivalence of categories (see \ref{eq:equivariant_equivalence})
\begin{equation}\label{eq:Wong_functor_2.2}
D^b_{G_{\mathbb{R}}\times L_{\mathbb{R}}, \mathbb{R}\mathchar`-c}(G_{\mathbb{R}}\times S)\stackrel{\sim}\rightarrow D^b_{G_{\mathbb{R}}, \mathbb{R}\mathchar`-c}(G_{\mathbb{R}}\times_{L_{\mathbb{R}}} S).
\end{equation}
Next, one can push forward by $a$, the action map, to obtain 
\begin{equation}\label{eq:Wong_functor_2.3}
a_*\colon D^b_{G_{\mathbb{R}}, \mathbb{R}\mathchar`-c}(G_{\mathbb{R}}\times_{L_{\mathbb{R}}} S)\rightarrow D^b_{G_{\mathbb{R}}, \mathbb{R}\mathchar`-c}(S).
\end{equation}
Composing the three functors (\ref{eq:Wong_functor_2.1}), (\ref{eq:Wong_functor_2.2}), (\ref{eq:Wong_functor_2.3}), one obtains a ``change of groups'' functor
\begin{equation}\label{eq:Wong_functor_2}
\Gamma_{L_{\mathbb{R}}}^{G_{\mathbb{R}}}\colon D^b_{L_{\mathbb{R}}, \mathbb{R}\mathchar`-c}(S)\rightarrow D^b_{G_{\mathbb{R}}, \mathbb{R}\mathchar`-c}(S).
\end{equation}

Recall $S$ was defined as an open subset of $X$; hence, we have a natural inclusion $j\colon S\hookrightarrow X$. Applying the corresponding proper pushforward functor, we obtain
\begin{equation}\label{eq:Wong_functor_3}
j_!\colon D^b_{G_{\mathbb{R}}, \mathbb{R}\mathchar`-c}(S)\rightarrow D^b_{G_{\mathbb{R}}, \mathbb{R}\mathchar`-c}(X).
\end{equation}

Composing the maps (\ref{eq:Wong_functor_1}), (\ref{eq:Wong_functor_2}), and (\ref{eq:Wong_functor_3}), one obtains the functor
\begin{equation}\label{eq:Wong_functor}
\op{Ind}_{L_{\mathbb{R}}}^{G_{\mathbb{R}}}:=j_!\Gamma_{L_{\mathbb{R}}}^{G_{\mathbb{R}}}\iota_*\colon D^b_{L_{\mathbb{R}}, \mathbb{R}\mathchar`-c}(X_L)\rightarrow D^b_{G_{\mathbb{R}}, \mathbb{R}\mathchar`-c}(X).
\end{equation}
\bigskip

Now, consider the constant sheaf $\mathbb{C}_{X_L}$ on $X_L$. One immediately checks
\begin{equation}\label{eq:constant_sheaf_1}
\op{Ext}^0(\mathbb{C}_{X_L}, \mathcal{O}_{X_L}(\mathcal{L}_{\mu}))\simeq \op{Hom}(\mathbb{C}_{X_L}, \mathcal{O}_{X_L}(\mathcal{L}_{\mu}))\simeq \Gamma(X_L, \mathcal{L}_{\mu})
\end{equation}
for all $\mu\in\mathfrak{c}^*$ integral. Moreover, if $l=\dim_{\mathbb{C}}X_L$ and $w_0$ is the longest element of the Weyl group (of the root system of the universal Cartan $\mathfrak{c}$), then by the Borel-Weil-Bott Theorem
\begin{equation}\label{eq:constant_sheaf_2}
\op{Ext}^{l}(\mathbb{C}_{X_L}, \mathcal{O}_{X_L}(\mathcal{L}_{\mu}))\simeq H^{l}(X_L, \mathcal{O}_{X_L}(\mathcal{L}_{\mu}))\simeq H^0(X_L, \mathcal{O}_{X_L}(\mathcal{L}_{w_0(\mu-\rho_{\mathfrak{l}})+\rho_{\mathfrak{l}}})).
\end{equation}

Fix a choice of positive roots $\Delta^+(\mathfrak{l},\mathfrak{h})\subset \Delta(\mathfrak{l},\mathfrak{h})$, and define
\[\rho_{\mathfrak{l}}=\frac{1}{2}\sum_{\alpha\in \Delta^+(\mathfrak{l},\mathfrak{h})}\alpha.\] 
Write $\mathfrak{q}_{\lambda}=\mathfrak{l}\oplus \mathfrak{n}$ as in Section \ref{subsec:parabolics}, and define 
\[\Delta^+(\mathfrak{g},\mathfrak{h})=\Delta^+(\mathfrak{l},\mathfrak{h})\cup \Delta(\mathfrak{n},\mathfrak{h})\]
using our (arbitrarily chosen) fixed choice of positive roots $\Delta^+(\mathfrak{l},\mathfrak{h})$. There exists a unique isomorphism $\mathfrak{h}\simeq \mathfrak{c}$ between $\mathfrak{h}$ and the universal Cartan subalgebra $\mathfrak{c}$ such that the positive roots $\Delta^+(\mathfrak{g},\mathfrak{c})$ correspond to the positive roots $\Delta^+(\mathfrak{g},\mathfrak{h})$.

If $\mu=\lambda+\rho_{\mathfrak{l}}+\rho=\lambda+\rho(\mathfrak{n})+2\rho_{\mathfrak{l}}$, then 
\begin{align*}
 w_0\cdot (\mu-\rho_{\mathfrak{l}})+\rho_{\mathfrak{l}}
& = w_0\cdot (\lambda+\rho(\mathfrak{n})+\rho_{\mathfrak{l}})+\rho_{\mathfrak{l}}\\
& = \lambda+\rho(\mathfrak{n})+w_0\cdot \rho_{\mathfrak{l}}+\rho_{\mathfrak{l}}
 = \lambda+\rho(\mathfrak{n})
\end{align*}
where we used that $\lambda+\rho(\mathfrak{n})$ vanishes on the semisimple part of $\mathfrak{l}$ to deduce that it is fixed by $w_0$.

Combining this calculation with (\ref{eq:constant_sheaf_1}) and (\ref{eq:constant_sheaf_2}), we obtain
\begin{align*}
&\op{Ext}^{l}(\mathbb{C}_{X_L}, \mathcal{O}_{X_L}(\lambda+\rho(\mathfrak{n})+\rho_{\mathfrak{l}}))\\
& =\op{Ext}^{l}(\mathbb{C}_{X_L}, \mathcal{O}_{X_L}(\mathcal{L}_{\lambda+\rho(\mathfrak{n})+2\rho_{\mathfrak{l}}}))\\
& =\op{Ext}^0(\mathbb{C}_{X_L}, \mathcal{O}_{X_L}(\mathcal{L}_{\lambda+\rho(\mathfrak{n})}))
 =\Gamma(X_L, \mathcal{L}_{\lambda+\rho(\mathfrak{n})})
 =\Gamma_{\lambda}\otimes e^{\rho(\mathfrak{n})}
\end{align*}
where the last line denotes the one-dimensional, unitary representation of $L_{\mathbb{R}}$. A similar application of Borel-Weil-Bott yields
\[\op{Ext}^{p}(\mathbb{C}_{X_L},\mathcal{O}_{X_L}(\lambda+\rho(\mathfrak{n})+\rho_{\mathfrak{l}}))=0\ \text{if}\ p\neq l.\]
Notice that the integrality condition (\ref{eq:integral}) is necessary for the sheaf $\mathcal{O}_{X_L}(\lambda+\rho(\mathfrak{n})+\rho_{\mathfrak{l}})$ in the above calculation to be well-defined.

In particular, we have shown that the Kashiwara-Schmid correspondence with ``twist'' $\lambda+\rho(\mathfrak{n})+\rho_{\mathfrak{l}}$ maps the constant sheaf $\mathbb{C}_{X_L}$ to an object in the derived category whose cohomology in degree $l$ is the one-dimensional representation $\Gamma_{\lambda}\otimes e^{\rho(\mathfrak{n})}$ and whose cohomology vanishes in all other degrees. Then by Proposition 4.5 of \cite{Won99}, we deduce 
$$\op{Ext}^{l+s}(\op{Ind}_{L_{\mathbb{R}}}^{G_{\mathbb{R}}} \mathbb{C}_{X_L}, \mathcal{O}_X(\lambda+\rho_{\mathfrak{l}}))\simeq H^s(\mathcal{O},\mathcal{L}_{\Gamma})$$
where $s=\dim_{\mathbb{C}} (\mathfrak{n}\cap \mathfrak{k})$. As noted in Section \ref{subsec:max_globalization}, it follows from results in \cite{Won95} that
$H^s(\mathcal{O},\mathcal{L}_{\Gamma})\simeq \pi^{-\omega}(\mathcal{O},\Gamma)$. Therefore, to place the representations $\pi(\mathcal{O},\Gamma)$ within the Kashiwara-Schmid correspondence, it is enough to compute $\op{Ind}_{L_{\mathbb{R}}}^{G_{\mathbb{R}}}\mathbb{C}_{X_L}$. Now, $\iota_*\mathbb{C}_{X_L}=\mathbb{C}_S$ and the constant sheaf $\mathbb{C}_S$ remains unchanged under the change of groups functor. Therefore, $\op{Ind}_{L_{\mathbb{R}}}^{G_{\mathbb{R}}}\mathbb{C}_{X_L}=j_{!}\mathbb{C}_S$ where $j\colon S\hookrightarrow X$ is the inclusion.

Hence, we conclude
\begin{equation}\label{eq:ext1}
\op{Ext}^{l+s}(j_!\mathbb{C}_S, \mathcal{O}_X(\lambda+\rho_{\mathfrak{l}}))\simeq \pi^{-\omega}(\mathcal{O},\mathcal{L}_{\Gamma})
\end{equation}
if $s=\dim_{\mathbb{C}}(\mathfrak{n}\cap \mathfrak{k})$ and $l=\dim_{\mathbb{C}}X_L$. Whereas 
\begin{equation}\label{eq:ext2}
\op{Ext}^{p}(j_!\mathbb{C}_S, \mathcal{O}_X(\lambda+\rho_{\mathfrak{l}}))=0
\end{equation}
if $p\neq l+s$.

In passing, we also note that one can deduce the above equalities from the results of Section 6 of \cite{Won99} together with knowledge of a $K_{\mathbb{C}}$-equivariant sheaf on a partial flag variety corresponding to the Harish-Chandra module of $\pi(\mathcal{O},\Gamma)$ (see for instance page 81 of \cite{Bie90}).

\subsection{A Character Formula of Schmid-Vilonen}
\label{subsec:SV}

In the seminal paper \cite{SV98}, Schmid and Vilonen give a geometric character formula for every irreducible, admissible representation $\pi$ of a real, reductive group $G_{\mathbb{R}}$ with regular infinitesimal character. In order to state their result, we must recall a couple of additional notions.

If $X$ is a complex manifold of complex dimension $n$ with complex coordinates $z_1,\ldots,z_n$
 with $z_i=x_i+\sqrt{-1} y_i$, then 
\[dx_1\wedge dy_1 \wedge dx_2\wedge dy_2 \wedge \cdots \wedge dx_n \wedge dy_n\]
defines an orientation on $X$.
If $Z\subset X$ is a closed (real) submanifold, the orientation on $X$ induces an orientation
 on the conormal bundle $T_Z^* X$ (see \cite{SV96}).

Now, let $X$ be the flag variety for $G$, assume $\mathcal{S}\in D^b_{G_{\mathbb{R}},\mathbb{R}\mathchar`-c}(X)$ and apply the forgetful functor to obtain an object in $D^b_{\mathbb{R}\mathchar`-c}(X)$ that, by abuse of notation, we also call $\mathcal{S}$. All of the $\mathbb{R}$-constructible sheaves $\op{H}^p(\mathcal{S})$ are constructible with respect to the stratification of $X$ defined by the $G_{\mathbb{R}}$-orbits on $X$. If $Z\subset X$ is a $G_{\mathbb{R}}$-orbit, then the conormal bundle to $Z$ in $X$ is 
$$T_Z^*X=\{(z,\xi)\mid z\in Z,\ \xi\in (\mathfrak{g}/(\mathfrak{g}_{\mathbb{R}}+\mathfrak{b}_z))^*\}$$
where $\mathfrak{b}_z\subset \mathfrak{g}$ is the Borel subalgebra corresponding to $z\in Z$. And
\[T_{G_{\mathbb{R}}}^*X=\bigcup_{Z\in X/G_{\mathbb{R}}} T^*_ZX\]
denotes the union of the conormal bundles to $G_{\mathbb{R}}$-orbits $Z\subset X$. For each $Z\in X/G_{\mathbb{R}}$, write
\[T_Z^*X \setminus \bigcup_{\substack{Z'\in X/G_{\mathbb{R}}\\ Z'\neq Z}}\overline{T_{Z'}^*X}=\bigcup_{\alpha} \Lambda_{Z,\alpha}\]
as a finite union of connected components. Then the characteristic cycle of $\mathcal{S}$, denoted $\op{CC}(\mathcal{S})$, is a subanalytic, Lagrangian cycle in $T^*_{G_{\mathbb{R}}}X$. More precisely it is a linear combination of components of the form $\Lambda_{Z,\alpha}$ for $Z\in X/G_{\mathbb{R}}$. Since $\Lambda_{Z,\alpha}\subset T_Z^*X$ is open, by the above discussion it inherits an orientation if $\dim_{\mathbb{C}} X$ is even and, one must divide by a non-canonical choice of $\sqrt{-1}$ to orient $\Lambda_{Z,\alpha}$ if $\dim_{\mathbb{C}}X$ is odd. Therefore, $\op{CC}(\mathcal{S})$ is equipped with an orientation if $\dim_{\mathbb{C}}X$ is even and one must divide by a non-canonical choice of $\sqrt{-1}$ to obtain an orientation if $\dim_{\mathbb{C}}X$ is odd. The characteristic cycle was introduced by Kashiwara \cite{Kas85}. See Section 9.4 of \cite{KS02} and Section 2 of \cite{SV96} for expositions. 

If $\mathfrak{c}$ is the universal Cartan subalgebra, then, for each pair $(\mathfrak{h},\Delta^+)$ where $\mathfrak{h}\subset \mathfrak{g}$ is a Cartan subalgebra and $\Delta^+\subset \Delta(\mathfrak{g},\mathfrak{h})$ is a subset of positive roots, we obtain a canonical isomorphism $\iota_{\mathfrak{h},\Delta^+}\colon \mathfrak{h}\simeq \mathfrak{c}$ which preserves positivity. If $\xi \in \mathfrak{c}^*$ is a regular element in the dual of the universal Cartan subalgebra, then we obtain an element $\xi_{\mathfrak{h},\Delta^+}=\iota^{-1}_{\mathfrak{h},\Delta^+}(\xi)$. Define $\Omega_{\xi}$ to be  the regular coadjoint $G$-orbit consisting of all elements of the form $\xi_{\mathfrak{h},\Delta^+}$ for some pair $(\mathfrak{h},\Delta^+)$. The set $\Omega_{\xi}$ is also the inverse image of $\xi$ under the fibration $q\colon \mathfrak{g}^*\rightarrow \mathfrak{c}^*/W$ defined in (\ref{eq:fibration}).

Fix a maximal compact subgroup $U:=G^{\sigma_c}\subset G$ where $\sigma_c$ is an anti-holomorphic involution of $G$ commuting with $\sigma$. For every $x\in X$, there is a unique Cartan subalgebra $\mathfrak{h}_x\subset \mathfrak{b}_x$ fixed by $\sigma_c$. If $\Delta^+\subset \Delta(\mathfrak{g},\mathfrak{h}_x)$ is the collection of positive roots determined by $\mathfrak{b}_x$, then we define $\xi_x:=\xi_{\mathfrak{h}_x,\Delta^+}\in \mathfrak{h}_x^*\subset \mathfrak{g}^*$. Rossmann defined the \emph{twisted momentum map} (see for instance (3) on page 265 of \cite{Ro90})
$$\mu_{\xi}\colon T^*X\longrightarrow \Omega_{\xi}\subset \mathfrak{g}^*$$
by
$$(x,\eta)\mapsto \xi_x+\eta.$$
The map $\mu_{\xi}$ is a $U$-equivariant real analytic diffeomorphism.

Let $\mathbb{D}\colon D^b_{G_{\mathbb{R}},\mathbb{R}\mathchar`-c}(X)\rightarrow D^b_{G_{\mathbb{R}},\mathbb{R}\mathchar`-c}(X)$ denote the Verdier duality functor (see page 37 of \cite{BL94}). Suppose $\mathcal{S}\in D^b_{G_{\mathbb{R}},\mathbb{R}\mathchar`-c}(X)$ and $\xi+\rho$ is integral,
Assume that there exists $q\in\mathbb{Z}$ such that
\[\op{Ext}^q(\mathbb{D}\mathcal{S},\mathcal{O}_X(\xi))\simeq \pi^{-\omega}\]
and
\[\op{Ext}^p(\mathbb{D}\mathcal{S},\mathcal{O}_X(\xi))=0\]
if $p\neq q$. Then Schmid and Vilonen show (see (1.8) on page 4 of \cite{SV98}) 
$$\mathcal{F}[\mu_{\xi}(\op{CC}(\mathcal{S}))]=(-1)^q \theta(\pi).$$

Notice $\mu_{\xi}(\op{CC}(\mathcal{S}))$ inherits a canonical orientation from the orientation of $\op{CC}(\mathcal{S})$ if $\dim_{\mathbb{C}}X$ is even. On the other hand, the orientation on $\mu_{\xi}(\op{CC}(\mathcal{S}))$ depends on a non-canonical choice of $\sqrt{-1}$ if $\dim_{\mathbb{C}}X$ is odd. Therefore, as explained in Section \ref{subsec:Rossmann_characters}, the Fourier transform is well-defined.
       
Let us consider the special case where $\pi=\pi(\mathcal{O},\Gamma)$ and $(\mathcal{O},\Gamma)$ is an elliptic orbital parameter satisfying the integrality condition (\ref{eq:integral}) and for which $\mathcal{O}$ is in the good range (\ref{eq:good_range}). Fix $\lambda\in \mathcal{O}$ with $\lambda=-\sigma_c(\lambda)$.  Let $\mathfrak{q}_{\lambda}=\mathfrak{g}(\lambda)\oplus \mathfrak{n}$ be an admissible polarization.
Fix a $\sigma$ and $\sigma_c$-stable Cartan subalgebra $\mathfrak{h}\subset \mathfrak{g}(\lambda)=\mathfrak{l}$. Fix a choice of positive roots $\Delta^+(\mathfrak{l},\mathfrak{h})$ and define $\Delta^+:=\Delta(\mathfrak{n},\mathfrak{h})\cup \Delta^+(\mathfrak{l},\mathfrak{h})$. By abuse of notation, we will often write $\lambda+\rho_{\mathfrak{l}}$ for the element $\iota_{\mathfrak{h},\Delta^+}(\lambda+\rho_{\mathfrak{l}})\in \mathfrak{c}^*$. This element is independent of the above choices.

By (\ref{eq:ext1}) and (\ref{eq:ext2}), we deduce that we may take $\mathcal{S}=j_!\mathbb{C}_S$ to be the sheaf corresponding to $\pi(\mathcal{O},\Gamma)$ where $q=s+l$. Applying Verdier duality, we obtain 
\[\mathbb{D}j_!\mathbb{C}_S\simeq Rj_*\mathbb{C}_S[2\dim_{\mathbb{C}}X]\]
(see Theorem 3.5.2 on page 37 of \cite{BL94}), and we deduce
\begin{equation}\label{eq:SV_character}
\mathcal{F}[\mu_{\lambda+\rho_{\mathfrak{l}}}(\op{CC}(Rj_*\mathbb{C}_S))]=(-1)^q\theta(\mathcal{O},\Gamma).
\end{equation}
Observe that the shift by the even number $2\dim_{\mathbb{C}}X$ does not change the characteristic cycle since the characteristic cycle is defined as an alternating sum of dimensions of certain local cohomologies in each degree (see Chapter 9 of \cite{KS02} and page 4 of \cite{SV96}). 

\section{Proof of the Character Formula}
\label{sec:proof}

In this section, we give a proof of Theorem \ref{thm:main}. In order to do so, we utilize ideas from Schmid-Vilonen \cite{SV96} and Bozicevic \cite{Boz02}, \cite{Boz08} to show that the characteristic cycle of the sheaf $Rj_*\mathbb{C}_S$ described in the previous section is homologous (up to sign) to the inverse image of our cycle $\mathcal{C}(\mathcal{O},\Gamma,\mathfrak{q},\sigma_c)$ under the twisted momentum map. Finally, since we work with untwisted sheaves, we need a coherent continuation argument to obtain the formula in full generality. 

\subsection{Integral case}
\label{subsec:homotopies}


The argument in this section is analogous to \cite[Section 7]{SV98},
  \cite[Section 3]{Boz02}, and \cite{Boz08}.
We retain the setting at the end of Section~\ref{subsec:SV}.
In particular, $(\mathcal{O},\Gamma)$ is an elliptic orbital parameter
 with $\mathcal{O}$ in the good range (\ref{eq:good_range}).
Let $\lambda\in \mathcal{O}$ such that $\lambda=-\sigma_c(\lambda)$
 and assume the integrality condition (\ref{eq:integral}). 
Put $\xi:=\lambda+\rho_{\mathfrak{l}}$.
To prove (\ref{eq:elliptic_character_formula}),
 it is enough to show that the cycle
 $\mu_{\xi}(\op{CC}(Rj_*\mathbb{C}_S))$ is homologous to
 $(-1)^q \mathcal{C}(\mathcal{O},\Gamma,\mathfrak{q},\sigma_c)$
 as an admissible cycle.

We say a chain $\mathcal{C}$ in $T^* X$ is \emph{$\mathbb{R}$-bounded}
 if $\op{Re} \mu_{\xi}(\op{supp} \mathcal{C})$ is bounded.

\begin{lemma}[{\cite[Lemma 3.19]{SV98}}]\label{lem:homologous_chains}
Let $\mathcal{C}_0$ and $\mathcal{C}_1$ be $\mathbb{R}$-bounded
 semi-algebraic $2m$-cycles in $T^* X$.
Suppose that there exists an $\mathbb{R}$-bounded
 semi-algebraic $(2m+1)$-chain $\widetilde{\mathcal{C}}$ in $T^* X$ such that
 $\partial \widetilde{\mathcal{C}}=\mathcal{C}_1-\mathcal{C}_0$.
Then 
\begin{align*}
\int_{\mathcal{C}_0} \mu_{\xi}^* \omega^m
= \int_{\mathcal{C}_1} \mu_{\xi}^* \omega^m.
\end{align*}
\end{lemma}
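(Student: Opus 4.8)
The plan is to deduce the lemma from Stokes' theorem for the closed form $\mu_{\xi}^*\omega^{\wedge m}$, the only genuine work being to control the behaviour at infinity. First I would record that $\omega^{\wedge m}$ is a closed form on $\Omega_{\xi}$: since $\omega$ is the (holomorphic) Kirillov--Kostant form, $d\omega=0$, hence $d(\omega^{\wedge m})=0$; and as $\Omega_{\xi}$ has complex dimension $2m$, the form $\omega^{\wedge m}$ is of type $(2m,0)$, so for any holomorphic function $f$ on $\Omega_{\xi}$ one also has $d(f\,\omega^{\wedge m})=\partial f\wedge\omega^{\wedge m}=0$ (there are no $(2m+1,0)$-forms on $\Omega_{\xi}$). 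Pulling back by the real-analytic diffeomorphism $\mu_{\xi}$ preserves closedness, so $\mu_{\xi}^*\omega^{\wedge m}$ -- and, when one pairs against a test density $\mu$ as in \eqref{eq:Fourier_transform_2}, also $\mu_{\xi}^*(\mathcal{F}[\mu]\,\omega^{\wedge m})$ -- is a closed $2m$-form on $T^*X$. (The integrals in the statement are to be understood with such a factor $\mathcal{F}[\mu]$ present, which is what guarantees convergence; the argument below handles both at once.)

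If $\widetilde{\mathcal{C}}$ were compact the lemma would be immediate: $0=\int_{\widetilde{\mathcal{C}}}d(\mu_{\xi}^*\omega^{\wedge m})=\int_{\partial\widetilde{\mathcal{C}}}\mu_{\xi}^*\omega^{\wedge m}=\int_{\mathcal{C}_1}\mu_{\xi}^*\omega^{\wedge m}-\int_{\mathcal{C}_0}\mu_{\xi}^*\omega^{\wedge m}$. For the general case I would use an exhaustion argument. Choose a proper semi-algebraic exhaustion function on the support of $\widetilde{\mathcal{C}}$, for instance $\phi:=|\operatorname{Im}\mu_{\xi}|^2$ restricted to $\widetilde{\mathcal{C}}$, which is proper precisely because $\widetilde{\mathcal{C}}$ is closed and $\mathbb{R}$-bounded. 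By semi-algebraic Sard, for almost every $R>0$ the sublevel chain $\widetilde{\mathcal{C}}_R:=\widetilde{\mathcal{C}}\cap\{\phi\le R\}$ has boundary $(\partial\widetilde{\mathcal{C}})\cap\{\phi\le R\}$ together with a ``slice at infinity'' $\Sigma_R$ supported in $\{\phi=R\}$. Applying the compact Stokes argument to $\widetilde{\mathcal{C}}_R$ and using $\partial\widetilde{\mathcal{C}}=\mathcal{C}_1-\mathcal{C}_0$ gives
\[
\int_{\Sigma_R}\mu_{\xi}^*\omega^{\wedge m}=\int_{(\mathcal{C}_1-\mathcal{C}_0)\cap\{\phi\le R\}}\mu_{\xi}^*\omega^{\wedge m},
\]
up to an overall sign; letting $R\to\infty$, the right-hand side converges to $\int_{\mathcal{C}_1}\mu_{\xi}^*\omega^{\wedge m}-\int_{\mathcal{C}_0}\mu_{\xi}^*\omega^{\wedge m}$, so it suffices to prove $\int_{\Sigma_R}\mu_{\xi}^*\omega^{\wedge m}\to0$.

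This last estimate is the main obstacle, and it is where the hypotheses are used. On $\widetilde{\mathcal{C}}$ one has $\operatorname{Re}\mu_{\xi}$ bounded, so $\mu_{\xi}(\Sigma_R)$ lies in a ball of radius $O(\sqrt{R})$ in $\mathfrak{g}^*$; the coefficients of $\omega^{\wedge m}$ are polynomial in the linear coordinates on $\mathfrak{g}^*$, whence $|\omega^{\wedge m}|=O(R^{m})$ there, while the $2m$-volume of the semi-algebraic slice $\Sigma_R$ grows at most polynomially in $R$ (the volume of a family of bounded-complexity semi-algebraic sets is a semi-algebraic, hence polynomially bounded, function of the parameter). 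Thus $\int_{\Sigma_R}\mu_{\xi}^*\omega^{\wedge m}$ is polynomially bounded; after pairing with a test density $\mu$ the extra factor $\mathcal{F}[\mu]$ is, by the Paley--Wiener theorem, rapidly decreasing on the $\mathbb{R}$-bounded strip containing $\mu_{\xi}(\widetilde{\mathcal{C}})$, so $\int_{\Sigma_R}\mu_{\xi}^*(\mathcal{F}[\mu]\,\omega^{\wedge m})\to0$, and the same estimate shows the integrals over $\mathcal{C}_0,\mathcal{C}_1$ converge. The lemma follows. (Alternatively one may simply invoke \cite[Lemma 3.19]{SV98}, whose proof is exactly this Stokes-plus-growth argument, adapted to the present chains.)
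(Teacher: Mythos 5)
The paper does not include a proof of this lemma — it simply cites \cite[Lemma 3.19]{SV98} — so there is no internal argument to compare against. Your reconstruction is the correct one and is essentially the argument Schmid and Vilonen give: the holomorphic top-degree form $\mathcal{F}[\mu]\,\omega^{\wedge m}$ is closed, Stokes' theorem is applied to the truncations $\widetilde{\mathcal{C}}\cap\{\phi\le R\}$, and the boundary term at infinity is killed because the Paley--Wiener rapid decay of $\mathcal{F}[\mu]$ on the $\mathbb{R}$-bounded strip beats the at-most-polynomial growth of $\omega^{\wedge m}$ and of the volume of the semi-algebraic slices.

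One small imprecision worth flagging: the volume of the slice $\Sigma_R$ is not itself a \emph{semi-algebraic} function of $R$ (it involves an integral), but it is a definable function in a polynomially bounded o-minimal structure, so it is indeed $O(R^{N})$ for some $N$ as $R\to\infty$, which is all your estimate needs. You also correctly identify that the displayed integrals must be read with the implicit test factor $\mathcal{F}[\mu]$ inserted (as in the definition of $\mathcal{F}[\mathcal{C}]$), since without it the integrals over noncompact $\mathcal{C}_i$ would diverge; this is how the lemma is stated and used in \cite{SV98}.
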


We will apply the lemma by constructing
 a $(2m+1)$-chain $\widetilde{\mathcal{C}}$
 such that $\partial \widetilde{\mathcal{C}}=\mathcal{C}_1-\mathcal{C}_0$,
 where $\mathcal{C}_0=\op{CC}(Rj_*\mathbb{C}_S)$ and 
 $\mathcal{C}_1=(-1)^q
 \mu_\xi^{-1}(\mathcal{C}(\mathcal{O},\Gamma,\mathfrak{q},\sigma_c))$.

The cycle $\op{CC}(Rj_*\mathbb{C}_S)$ can be described by
 using the open embedding theorem by Schmid-Vilonen~\cite[Theorem 4.2]{SV96}.
Let $V$ be the irreducible finite-dimensional
 representation of $G$ with highest weight
 $\lambda+\rho(\mathfrak{n})$.
The representation space $V$ is realized by the Borel-Weil theorem
 as regular functions $F: G\to \mathbb{C}$ satisfying
 $F(gp)=e^{-\lambda-\rho(\mathfrak{n})}(p)F(g)$
 for $g\in G$ and $p\in \overline{Q}_\lambda$.
Here, $\overline{Q}_\lambda$ is the opposite parabolic subgroup
 of $Q_\lambda=N_{G}(\mathfrak{q}_\lambda)$.
Since $Q_\lambda$ is stable by the involution $\theta=\sigma\sigma_c$, 
 the action of $\theta$ on $G$ induces an involution on $V$,
 which we also denote by $\theta$.
Then $\theta(g)\cdot \theta(v) = \theta(g\cdot v)$ for $g\in G$ and $v\in V$.
Fix a $U$-invariant Hermitian inner product $h$ on $V$ and 
 define another hermitian form $h_r(v,w):=h(v,\theta(w))$.
Then $h_r$ is $G_{\mathbb{R}}$-invariant:
\begin{align*}
h_r(gv,gw)=h(gv,\theta(g)\theta(w))
=h(v,\sigma_c(g)^{-1}\theta(g)\theta(w))
=h(v,\theta(w))
=h_r(v,w)
\end{align*}
for $g\in G_{\mathbb{R}}$.
Define a real algebraic function $f$ on $X$ by
\begin{align*}
 f(x)=\frac{h_r(v,v)}{h(v,v)},
 \quad v\in V^{[\mathfrak{b}_x,\mathfrak{b}_x]}\setminus 0.
\end{align*}
Since the highest weight space $V^{[\mathfrak{b}_x,\mathfrak{b}_x]}$ is one-dimensional, $f$ is well-defined. Let $y\in Y$ and $\mathfrak{q}_y=\mathfrak{l}_y\oplus\mathfrak{n}_y$ the corresponding parabolic subalgebra of $\mathfrak{g}$. By \cite[Theorem 5.104(a)]{Kna05} $V^{\mathfrak{n}_y}$ is an irreducible $\mathfrak{l}_y$-module with highest weight $\lambda+\rho(\mathfrak{n})$. But, since $\lambda+\rho(\mathfrak{n})$ vanishes on $[\mathfrak{l}_y,\mathfrak{l}_y]$, $V^{\mathfrak{n}_y}$ is one-dimensional and isomorphic to $V^{[\mathfrak{q}_y,\mathfrak{q}_y]}$. Let $f_Y$ be the function on $Y$ given by
\begin{align*}
 f_Y(y)=\frac{h_r(v,v)}{h(v,v)},
 \quad v\in V^{[\mathfrak{q}_y,\mathfrak{q}_y]}\setminus 0.
\end{align*}
Recall that we have a natural map $\varpi:X\to Y$. If $\varpi(x)=y$, then $V^{[\mathfrak{b}_x,\mathfrak{b}_x]}=V^{[\mathfrak{q}_y,\mathfrak{q}_y]}$. 
Hence $f=f_Y\circ \varpi$. 

Let $y_0\in Y$ be the base point corresponding to $\mathfrak{q}_{\lambda}$.
A vector $v_0\in V^{[\mathfrak{q}_{\lambda},\mathfrak{q}_{\lambda}]}$
 is realized as a function $F$ on $G$ such that
 $F(pp')=e^{\lambda+\rho(\mathfrak{n})}(p)e^{-\lambda-\rho(\mathfrak{n})}(p')$
 for $p\in Q_{\lambda}$ and $p'\in\overline{Q}_{\lambda}$.
Therefore $\theta (v_0)=v_0$ and $f_Y(y_0)=1$.
For $g\in G_{\mathbb{R}}$,
\begin{align*}
f_Y(g y_0)
 = \frac{h_r(gv_0,gv_0)}{h(gv_0,gv_0)}
 = \frac{h(v_0,v_0)}{h(gv_0,gv_0)}>0.
\end{align*}

We next claim $f_Y(y)=0$ for $y\in \partial S$.
Replacing $y$ by $g y$ for $g\in G_\mathbb{R}$ if necessary,
 we may assume that $\mathfrak{q}_y$ contains
 a $\theta$-stable Cartan subalgebra
 $\mathfrak{h}'_{\mathbb{R}}\subset \mathfrak{g}_\mathbb{R}$.
Let $\lambda'\in (\mathfrak{h}')^*$ be the weight
 corresponding to $\lambda\in \mathfrak{c}^*$ by
 the isomorphism $\mathfrak{h}'\simeq \mathfrak{c}$.
Then $\mathfrak{h}'$ acts by $\lambda'$ on
 $V^{[\mathfrak{q}_y,\mathfrak{q}_y]}$.
Since $\lambda$ is elliptic, $\lambda'=-\sigma_c(\lambda')$.
If moreover $\lambda'=\theta(\lambda')$, then $\lambda'$
 takes purely imaginary values on $\frak{h}'_{\mathbb{R}}$.
But this implies $\mathfrak{g}_\mathbb{R}+\mathfrak{q}_y=\mathfrak{g}$
 and $G_{\mathbb{R}}\cdot y $ is open in $Y$,
 which contradicts $y\in \partial S$.
Therefore, $\lambda'\neq \theta(\lambda')$ and
 $V^{[\mathfrak{q}_y,\mathfrak{q}_y]}$
 and $\theta (V^{[\mathfrak{q}_y,\mathfrak{q}_y]})$ are
 in the different $\mathfrak{h}'$-weight spaces.
Hence $h_r(v,v)=h(v,\theta(v))=0$
 for $v\in V^{[\mathfrak{q}_y,\mathfrak{q}_y]}$, proving the claim.

The above statements on $f_Y$ imply 
 $f=f_Y\circ \varpi$ is positive on $S$
 and zero on the boundary $\partial S$.

Write $X^{\mathbb{R}}$ for the underlying real manifold of $X$.
In what follows, we identify the complex cotangent bundle $T^*X$ with
 the real contangent bundle $T^*X^{\mathbb{R}}$ by 
 the correspondence
 of $\partial \phi (x) \in T_x^*X$ and $d\phi(x) \in T^* X^{\mathbb{R}}$
 for a real-valued function $\phi$ on $X$.
Also, $T^*Y$ and $T^*Y^{\mathbb{R}}$ are identified in a similar way.
Define a map $\varphi:(0,1)\times S \to T^* X$ by $(t,x)\mapsto t (d \log f)_x$ and define a $(2m+1)$-chain $\widetilde{\mathcal{C}}$ in $T^* X$ by the image of $\varphi$. For the orientation, if $\dim S$ is even, then we take a product of the positive orientation on $(0,1)$ and the natural orientation on $S$ coming from the complex structure (see Section \ref{subsec:SV} for a definition). If $\dim S$ is odd, then we take the product of the positive orientation on $(0,1)$ with the non-canonical choice of orientation on $S$ which depends on a choice of $\sqrt{-1}$. By \cite[Proposition 3.25 and Theorem 4.2]{SV96}, we have 
\[\partial \widetilde{\mathcal{C}} =[d \log (f|_S)]-\op{CC}(Rj_*\mathbb{C}_S).\]
As a result, the following lemma implies (\ref{eq:elliptic_character_formula}) under the integrality condition (\ref{eq:integral}). In the next section, we will remove the assumption (\ref{eq:integral}) and then Lemma \ref{lem:elliptic_reduction} will imply our main result, Theorem \ref{thm:main}. 

Recall that we put $s:=\dim (\mathfrak{n}\cap\mathfrak{k})$,
 $l:=\dim_{\mathbb{C}} X_L$, and $q:=s+l$.

\begin{lemma}\label{lem:dlogf}
In the setting above, the twisted momentum map $\mu_\xi : T^*X\to \Omega_{\xi}$ restricted to $d \log (f|_S)$ gives a real algebraic isomorphism $d \log (f|_S)\to \mathcal{C}(\mathcal{O},\Gamma,\mathfrak{q},\sigma_c)$. Moreover, the orientation is preserved if $q$ is even
 and reversed if $q$ is odd. 
\end{lemma}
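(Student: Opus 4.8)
The plan is to compute both the set $d\log(f|_S)\subset T^*X$ and its image under $\mu_\xi$ explicitly, and then match the result with the defining description \eqref{eq:contourII} of $\mathcal{C}(\mathcal{O},\Gamma,\mathfrak{q},\sigma_c)$. First I would record that, because $f=f_Y\circ\varpi$ and $f_Y$ is pulled back from the partial flag variety $Y$, the differential $d\log f$ at a point $x\in S$ is annihilated by the vertical tangent space of $\varpi\colon X\to Y$; hence $(d\log f)_x$ lies in the conormal bundle of the fiber $\varpi^{-1}(\varpi(x))$, which is exactly where we expect the $\rho_{\mathfrak l}$-direction of the contour to live. Writing a general point of $S$ as $g\cdot x_0$ for $g\in G_{\mathbb R}$ (with $x_0$ the base Borel contained in $\mathfrak q_\lambda$), $G_{\mathbb R}$-invariance of $h_r$ gives $f(gx)=f_Y(gy_0)\cdot f_{\mathrm{fib}}(x)$ for a function on the fiber, so $d\log f$ decomposes into a piece tangent to the $G_{\mathbb R}$-orbit direction (which vanishes along $G_{\mathbb R}$-orbits since $f$ restricted to an orbit is constant up to the fiber variable — more precisely the $y_0$-direction contributes nothing after applying $\mu_\xi$ because $\mu_\xi$ absorbs it into the base point $\xi_x$) and a piece living in the fiber $X_L$ over $y$. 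The key computation is then to identify $d\log(f|_{X_L})$ with the moment map image of $\rho_{\mathfrak l}$: on the compact flag variety $X_L$ for $L$, the function $v\mapsto h_r(v,v)/h(v,v)$ on the highest-weight line is a standard "potential" whose differential, under the $U_\lambda$-twisted momentum map for $L$, traces out precisely the $U_\lambda$-orbit $\mathcal O_\rho^{U_\lambda}$ — this is the classical fact, going back to Rossmann/Schmid-Vilonen, that the Borel-Weil potential realizes the compact coadjoint orbit through $\rho_{\mathfrak l}$.

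Concretely, the steps in order would be: (1) reduce to the base point $y_0$ by $G_{\mathbb R}$-equivariance of both $f$ and $\mu_\xi$, so that it suffices to analyze $\mu_\xi$ restricted to $d\log f$ over the single fiber $X_L=\varpi^{-1}(y_0)$ and then translate by $G_{\mathbb R}$; (2) on that fiber, use that $\mu_\xi(x,\eta)=\xi_x+\eta$ together with the vanishing of the $Y$-tangential part of $d\log f$ to show $\mu_\xi((d\log f)_x)=\xi_x+(\text{fiber conormal part})$, and identify $\xi_x=\lambda+(\text{a }\mathfrak u_\lambda\text{-point})$ where the $\mathfrak u_\lambda$-point runs over $\mathcal O_\rho^{U_\lambda}$ as $x$ runs over $X_L$ with $\sigma_c$-stable Cartan $\mathfrak h_x$; (3) check that the fiber-conormal part of $d\log f$, under the identification of $T^*_{x}X_L$ with the coadjoint orbit data for $L$, is exactly the Rossmann contour for the one-dimensional $L_{\mathbb R}$-representation $\Gamma_\lambda\otimes e^{\rho(\mathfrak n)}$ shifted so that the total contribution is $u\cdot\rho_{\mathfrak l}$, matching \eqref{eq:contourII}; (4) reassemble by $G_{\mathbb R}$-translation to get the set equality $\mu_\xi(d\log(f|_S))=\mathcal C(\mathcal O,\Gamma,\mathfrak q,\sigma_c)$, noting that the condition $g\cdot\mathfrak q_\lambda=u\cdot\mathfrak q_\lambda$ in \eqref{eq:contourII} is precisely the compatibility needed for $\xi_{g x}$ to have the stated form; (5) verify bijectivity — injectivity because $\mu_\xi$ is a diffeomorphism of $T^*X$ onto $\Omega_\xi$ and $d\log f$ is the graph of an exact $1$-form, hence a section; surjectivity by the set equality.

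The orientation comparison is step (6): $d\log(f|_S)$, being the graph of $d\log f$ over $S$, inherits the orientation of $S$, which we took to be the complex orientation (with a $\sqrt{-1}$-correction when $\dim_{\mathbb C}S$ is odd). Under the diffeomorphism $\mu_\xi$, this has to be compared with the orientation on $\mathcal C(\mathcal O,\Gamma,\mathfrak q,\sigma_c)$ defined in Section \ref{subsec:Rossmann_characters} via $\omega^{\wedge k}/(\sqrt{-1})^k$ on the base $\mathcal O$ and the canonical symplectic form on the compact fiber $\mathcal O_\rho^{U_\lambda}$. The plan is to track the sign through the factorization $\mathcal C\to\mathcal O$ with fiber $\mathcal O_\rho^{U_\lambda}$: the base $\mathcal O\cong G_{\mathbb R}/L_{\mathbb R}$ picks up a sign governed by $\dim_{\mathbb C}(\mathfrak n\cap\mathfrak k)=s$ (the comparison between the complex orientation of the open $G_{\mathbb R}$-orbit and the coadjoint-orbit orientation, exactly as in \cite[Section 7]{SV98}), while the compact fiber $X_L\cong\mathcal O_\rho^{U_\lambda}$ contributes a sign governed by $l=\dim_{\mathbb C}X_L$. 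Adding these gives the discrepancy $(-1)^{s+l}=(-1)^q$, i.e. the orientation is preserved iff $q$ is even. I expect \textbf{this orientation bookkeeping to be the main obstacle}: the set-theoretic and equivariance parts are essentially formal once the Borel-Weil potential computation is in hand, but matching the two a priori unrelated orientation conventions — the "complex structure on $S$" convention inherited from Schmid-Vilonen's open embedding theorem versus the "symplectic volume on coadjoint orbits" convention of Rossmann — requires a careful local computation of how $\mu_\xi$ transports the orientation, and the $\sqrt{-1}$-corrections in the odd-dimensional cases must be checked to cancel consistently so that the final sign is genuinely $(-1)^q$ and independent of all auxiliary choices.
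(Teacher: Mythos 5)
Your overall strategy---compute $d\log f$, apply $\mu_\xi$, match the result with the defining description of $\mathcal{C}(\mathcal{O},\Gamma,\mathfrak{q},\sigma_c)$, and compare orientations via the base-times-fiber decomposition---is the same as the paper's, and your prediction $(-1)^{s+l}=(-1)^q$ for the orientation discrepancy is correct. However, there are two places where the intermediate reasoning as you have set it up would not go through.

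First, neither $f$ nor $\mu_\xi$ is $G_{\mathbb{R}}$-equivariant, so the reduction in step (1) cannot be done the way you describe. The twisted momentum map $\mu_\xi$ is only $U$-equivariant (it depends on $\sigma_c$), and the function $f$ is only $K_{\mathbb{R}}=G_{\mathbb{R}}\cap U$-invariant: indeed the paper's computation shows $f_Y(gy_0)=h(v_0,v_0)/h(gv_0,gv_0)$, which varies nontrivially along the $G_{\mathbb{R}}$-orbit. If you computed $(d\log f)_{x_0}$ at the base point and translated by $G_{\mathbb{R}}$, you would miss the contribution of the denominator $h(gv_0,gv_0)$. The paper avoids this by computing $\langle d\log f_Y, Z\rangle$ directly at a general point $y=gy_0=uy_0$ (with $g\in G_{\mathbb{R}}$, $u\in U$) and obtaining $d\log f_Y|_y = g\cdot\lambda - u\cdot\lambda$; this direct calculation is the technical core and cannot be bypassed by an equivariance argument.

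Second, your steps (2)--(3) conflate where the covector $d\log f$ lives with where the $\rho_{\mathfrak{l}}$-orbit arises. You are right that $d\log f=\varpi^*(d\log f_Y)$ lies entirely in the conormal bundle of the $\varpi$-fiber---but precisely for that reason $d\log f$ has no ``$X_L$-direction'' component at all, so there is no ``fiber-conormal part of $d\log f$ to be identified with $T^*_x X_L$'' and with the Rossmann contour for $\Gamma_\lambda\otimes e^{\rho(\mathfrak{n})}$. In fact the $\rho_{\mathfrak{l}}$-orbit $u\cdot\mathcal{O}_{\rho}^{U_\lambda}$ in the final answer is traced out not by $d\log f$ but by the twist $\xi_x=u\cdot(\lambda+\rho_{\mathfrak{l}})$ as $x=u\cdot x_0$ varies over the fiber $\varpi^{-1}(y)\cong X_L$ (with $u$ ranging over a coset of $U_\lambda$). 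The covector $d\log f=g\cdot\lambda-u\cdot\lambda$ is constant along that fiber and plays a different role: it cancels the $u\cdot\lambda$ coming from $\xi_x$ so that the base contribution becomes $g\cdot\lambda\in\mathcal{O}$. Step (3) as stated cannot be carried out without untangling this. Once the correct bookkeeping---$\mu_\xi(u\cdot x_0, g\cdot\lambda-u\cdot\lambda)=g\cdot\lambda+u\cdot\rho_{\mathfrak{l}}$---is in place, the set equality with \eqref{eq:contourII} is immediate, and your orientation plan (root-space by root-space comparison on the base governed by $s$, and on the fiber $X_L$ governed by $l$, as in Section 7 of \cite{SV98}) is then essentially the paper's argument.
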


\begin{proof}
The proof is analogous to \cite[(7.27a)]{SV98},
 \cite[Lemma 3.2]{Boz02}, and \cite{Boz08},
 but we include this for completeness. 
For $Z\in \mathfrak{g}$, we also write $Z$ for the corresponding vector field on $Y$.
Then by an isomorphism $T^*Y^{\mathbb{R}}\simeq T^*Y$, 
\begin{align*}
\langle d \log f_Y, Z\rangle (y)
&=\langle \partial \log f_Y, Z\rangle (y) \\
&= \frac{1}{2}\frac{d}{dt}
 \Bigl(\log f_Y (\exp (tZ) y) 
 -\sqrt{-1} \log f_Y (\exp (\sqrt{-1}tZ) y) \Bigr)\Bigr|_{t=0}
\end{align*}
for $y\in S'$.
If $y=g y_0 = uy_0$ with $g\in G_{\mathbb{R}}$ and $u\in U$,
 then 
\begin{align*}
&\frac{d}{dt} \log f_Y (\exp (tZ) y) \Bigr|_{t=0} \\
&= \frac{h_r(Zgv_0,gv_0)+h_r(gv_0,Zgv_0)}{h_r(gv_0,gv_0)}
 -\frac{h(Zuv_0,uv_0)+h(Zuv_0,uv_0)}{h(uv_0,uv_0)}\\
&=\frac{2 \op{Re} h_r(\op{Ad}(g^{-1})(Z)v_0,v_0)}{h_r(v_0,v_0)}
 -\frac{2 \op{Re} h(\op{Ad}(u^{-1})(Z)v_0,v_0)}{h(v_0,v_0)}\\
&=2 \op{Re} \langle \lambda, \op{Ad}(g^{-1})Z \rangle
 - 2 \op{Re} \langle \lambda, \op{Ad}(u^{-1})Z \rangle\\
&=2 \op{Re} \langle \op{Ad}^*(g)\lambda, Z \rangle
 - 2 \op{Re}  \langle \op{Ad}^*(u)\lambda, Z \rangle.
\end{align*}
Similarly,
\begin{align*}
&-\sqrt{-1}\frac{d}{dt} \log f_Y (\exp (\sqrt{-1}tZ) y) \Bigr|_{t=0}\\
&= 2 \sqrt{-1} \op{Im} \langle \op{Ad}^*(g)\lambda, Z \rangle
 - 2 \sqrt{-1} \op{Im}  \langle \op{Ad}^*(u)\lambda, Z \rangle.
\end{align*}
Therefore, 
\begin{align*}
\langle d \log f_Y, Z\rangle (y)= \langle g\cdot \lambda-u\cdot \lambda, Z\rangle.
\end{align*}
Under isomorphisms $T^*Y^{\mathbb{R}}\simeq T^*Y
 \simeq \{(y,\xi)\mid y\in Y,\ \xi\in (\mathfrak{g}/\mathfrak{q}_y)^*\}$,
 we obtain
\begin{align*}
d \log (f_Y|_{S'})
=\left\{(u\cdot y_0,\, g\cdot \lambda-u\cdot \lambda)
 \mid g\in G_{\mathbb{R}},\ u\in U,\ 
 g\cdot \mathfrak{q}_{\lambda}=u\cdot \mathfrak{q}_{\lambda}\right\}.
\end{align*}
Fix $x_0\in S$ such that $\varpi(x_0)=y_0$.
Then it is easy to see that
\begin{align*}
d \log (f|_{S})
=\left\{(u\cdot x_0,\, g\cdot \lambda-u\cdot \lambda)
 \mid g\in G_{\mathbb{R}},\ u\in U,\ 
 g\cdot \mathfrak{q}_{\lambda}=u\cdot \mathfrak{q}_{\lambda}\right\}.
\end{align*}
Since
 $\mu_{\xi}(u\cdot x_0,\, \eta)
 = \eta+u\cdot (\lambda+\rho_{\mathfrak{l}})$, we have
\begin{align*}
\mu_{\xi}(d \log (f|_{S}))
=\left\{g\cdot \lambda+u\cdot \rho_{\mathfrak{l}}
 \mid g\in G_{\mathbb{R}},\ u\in U,\ 
 g\cdot \mathfrak{q}_{\lambda}=u\cdot \mathfrak{q}_{\lambda}\right\}
=\mathcal{C}(\mathcal{O},\Gamma,\mathfrak{q},\sigma_c).
\end{align*}
Hence the first assertion is proved.

To consider the orientation,
 let $\mu_{Y,\lambda}:T^*Y \to \mathfrak{g}^*$
 be the twisted momentum map for the partial flag variety given by
\begin{align*}
\mu_{Y,\lambda}(u\cdot \mathfrak{q}_{\lambda},\eta)
 =u\cdot \lambda+\eta
\quad  \text{ for $u\in U$, $\eta\in (\mathfrak{g}/(u\cdot \mathfrak{q}_{\lambda}))^*$}.
\end{align*}
There is a commutative diagram
\[\begin{CD}
d\log (f|_S) @>\mu_{\xi}>>
  \mathcal{C}(\mathcal{O},\Gamma,\mathfrak{q},\sigma_c) \\
@V\varpi VV @VVV \\
d\log (f_Y|_{S'}) @>>\mu_{Y,\lambda}> \mathcal{O}  
\end{CD}\]
where the horizontal maps are isomorphisms. 
The isomorphisms $d\log(f|_S)\simeq S$ and $d\log(f_Y|_{S'})\simeq S'$
 together with the complex structures on $S$ and $S'$ orient $d\log(f|_S)$ and $d\log(f_Y|_{S'})$. 
The coadjoint orbit $\mathcal{O}$ is oriented by
 $\frac{\omega^k}{\sqrt{-1}^{k}}$,
 where $\omega$ is the canonical symplectic form and
 $2k=\dim_{\mathbb{R}} \mathcal{O}$.

We show that the orientation is preserved by the map $\mu_{Y,\lambda}$ exactly
 when $s$ is even.
For this, it is enough to compare the orientation at
 $y_0$ and at $\lambda \in \mathcal{O}$.
The tangent spaces $T_{y_0}S$ and $T_{\lambda}\mathcal{O}$ are identified with 
 $\mathfrak{g}_{\mathbb{R}}/\mathfrak{l}_{\mathbb{R}}$.
The complex structure on this space is given by the isomorphism
 $\mathfrak{g}_{\mathbb{R}}/\mathfrak{l}_{\mathbb{R}}
 \simeq \mathfrak{g}/\mathfrak{q}_{\lambda}$.
Let $\mathfrak{g}=\mathfrak{k}\oplus\mathfrak{g}^{-\theta}$
 and $\mathfrak{h}=\mathfrak{h}^{\theta} \oplus\mathfrak{h}^{-\theta}$ be 
 Cartan decompositions.
Since $\mathfrak{h}$ is a fundamental Cartan subalgebra,
 $\mathfrak{h}^{\theta}$ is a Cartan subalgebra of $\mathfrak{k}$.
We have root decompositions:
\begin{align*}
\mathfrak{k}=
 \mathfrak{h}^{\theta}\oplus
 \bigoplus_{\alpha\in\Delta(\mathfrak{k},\mathfrak{h}^{\theta})}
 \mathfrak{k}_{\alpha}
,\qquad
\mathfrak{g}^{-\theta}=
 \mathfrak{h}^{-\theta}\oplus
 \bigoplus_{\alpha\in\Delta(\mathfrak{g}^{-\theta},\mathfrak{h}^{\theta})}
 \mathfrak{g}^{-\theta}_{\alpha}.
\end{align*}
Correspondingly, the isomorphism 
$\mathfrak{g}_{\mathbb{R}}/\mathfrak{l}_{\mathbb{R}}
 \simeq \mathfrak{g}/\mathfrak{q}_{\lambda}$ is decomposed into
\begin{align*}
&(\mathfrak{k}_{\alpha} \oplus \mathfrak{k}_{-\alpha})
  \cap \mathfrak{k}_{\mathbb{R}}
\simeq \mathfrak{k}_{-\alpha}
 \text{\ for\ }
 \alpha\in\Delta(\mathfrak{n}\cap\mathfrak{k},\mathfrak{h}^{\theta})
\text{\ \ and }\\
&(\mathfrak{g}^{-\theta}_{\alpha} \oplus \mathfrak{g}^{-\theta}_{-\alpha})
  \cap \mathfrak{g}^{-\theta}_{\mathbb{R}}
\simeq \mathfrak{g}^{-\theta}_{-\alpha}
 \text{\ for\ }
 \alpha\in\Delta(\mathfrak{n}\cap\mathfrak{g}^{-\theta},\mathfrak{h}^{\theta}).
\end{align*}
This decomposition is orthogonal with respect to the symplectic form
$\omega(Z_1,Z_2)=\lambda([Z_1,Z_2])$.
Therefore, the two orientations on
 $\mathfrak{g}_{\mathbb{R}}/\mathfrak{l}_{\mathbb{R}}$
 induce those on each summand.
Then it is easy to see that the two orientations differ on 
$(\mathfrak{k}_{\alpha} \oplus \mathfrak{k}_{-\alpha})
 \cap \mathfrak{k}_{\mathbb{R}}$ and 
agree on
 $(\mathfrak{g}_{\alpha}^{-\theta} \oplus \mathfrak{g}_{-\alpha}^{-\theta})
 \cap \mathfrak{g}_{\mathbb{R}}$.
Hence $\mu_{Y,\lambda}$ preserves the orientation exactly
 when $s=\# \Delta(\mathfrak{n}\cap \mathfrak{k},\mathfrak{h}^{\theta})$
 is even.

Recall that $\mathcal{C}(\mathcal{O},\Gamma,\mathfrak{q},\sigma_c)$
 has an orientation defined by the orientations of $\mathcal{O}$
 and the fiber $\lambda+\mathcal{O}_{\rho}^{U_{\lambda}}$. 
The fiber of $\varpi : X\to Y$ is isomorphic to $X_L$ and the restriction of
 $\mu_{\xi}$ gives an isomorphism
 $X_L\xrightarrow{\mu_{\xi}} \lambda+\mathcal{O}_{\rho}^{U_{\lambda}}$. 
Recall from Section~\ref{subsec:SV} that the complex manifold $X_L$ has a natural orientation.
The map $\mu_{\xi}|_{X_L}$ is identified with the twisted moment map for $U_{\lambda}$
 and an argument as above shows
 it preserves orientation if and only if $\dim_{\mathbb{C}} X_L$ is even. 
Therefore, the map between total spaces
 $d\log (f|_S) \xrightarrow{\mu_{\xi}} \mathcal{C}(\mathcal{O},\Gamma,\mathfrak{q},\sigma_c)$
 preserves orientation if and only if $q=s+l$ is even. 
\end{proof}

One can drop the integrality condition (\ref{eq:integral}) on $\lambda$ 
 for the claim that
 $\mu_{\lambda+\rho_{\mathfrak{l}}}(\op{CC}(Rj_*\mathbb{C}_S))$ is homologous to
 $(-1)^q \mathcal{C}(\mathcal{O},\Gamma,\mathfrak{q},\sigma_c)$
 as an admissible cycle.
Indeed, since $\lambda$ is elliptic, one can choose a positive integer $N$ such that
 $N(\lambda+\rho(\mathfrak{n}))$ is integral.
Let $f$ be the function on $X$ constructed from the irreducible finite-dimensional representation
 of $G$ with highest weight $N(\lambda+\rho(\mathfrak{n}))$.
Define $\widetilde{\mathcal{C}}$ to be the image of the map
 $\varphi:(0,\frac{1}{N})\times S \to T^* X$ by $(t,x)\mapsto t (d \log f)_x$.  
Then we have
 $\partial \widetilde{\mathcal{C}}
 =\frac{1}{N}[d \log (f|_S)]-\op{CC}(Rj_*\mathbb{C}_S)$ and 
 $\mu_{\lambda+\rho_{\mathfrak{l}}}(\frac{1}{N}[d \log (f|_S)])
 =\mathcal{C}(\mathcal{O},\Gamma,\mathfrak{q},\sigma_c)$,
 which prove the claim.

\subsection{Coherent Continuation}
\label{subsec:coherent_continuation}

In this subsection, we will prove (\ref{eq:elliptic_character_formula}). More precisely, if $(\mathcal{O},\Gamma)$ is an elliptic orbital parameter for $G_{\mathbb{R}}$ in the good range (\ref{eq:good_range}), $\mathfrak{q}$ is a maximally real, admissible polarization, and $\sigma_c$ is a compact form of $G$ commuting with $\sigma$, then 
\begin{equation}\label{eq:final_equation} 
\theta(\mathcal{O},\Gamma)=\mathcal{F}[\mathcal{C}(\mathcal{O},\Gamma,\mathfrak{q},\sigma_c)].
\end{equation}
By Lemma \ref{lem:elliptic_reduction}, (\ref{eq:elliptic_character_formula}) implies the main result of this paper, Theorem \ref{thm:main}. In the previous subsection, we showed (\ref{eq:final_equation}) under the additional integrality assumption (\ref{eq:integral}). In this subsection, we will remove this assumption using a coherent continuation argument.  We note in passing that one can verify (\ref{eq:final_equation}) by rewriting Section \ref{sec:KSV} and Section \ref{subsec:homotopies} in greater generality using the theory of twisted sheaves. However, we have chosen not to do this in an effort to make this article accessible to a wider audience.

Recall from Section \ref{subsec:Rossmann_characters} that the infinitesimal character of an invariant eigendistribution $\theta$ on $\mathfrak{g}_{\mathbb{R}}$ is a $W$-orbit in $\mathfrak{c}^*$. In this section, we will abuse notation and say $\theta$ has infinitesimal character $\eta\in \mathfrak{c}^*$ whenever $\eta$ is contained in the correct $W$-orbit. Let $(\mathfrak{c}^*)'\subset \mathfrak{c}^*$ denote the subset of regular elements.

If $\theta$ is an invariant eigendistribution on $\mathfrak{g}_{\mathbb{R}}$, then $\theta$ is given by integration against a $G_{\mathbb{R}}$-invariant, locally $L^1$, analytic function on the collection of regular, semisimple elements $\mathfrak{g}_{\mathbb{R}}'\subset \mathfrak{g}_{\mathbb{R}}$ (this is the main result of \cite{HC65a}). By abuse of notation, we write $\theta$ for both the invariant eigendistribution and the analytic function. Further, if we fix a Cartan subalgebra $\mathfrak{h}_{\mathbb{R}}\subset \mathfrak{g}_{\mathbb{R}}$ and a choice of positive roots $\Delta^+\subset \Delta(\mathfrak{g},\mathfrak{h})$, then we define the \emph{Weyl denominator} on $\mathfrak{h}_{\mathbb{R}}$ to be the function
\[D(\mathfrak{h}_{\mathbb{R}},\Delta^+)(X)=\prod_{\alpha\in \Delta^+} (e^{\alpha(X)/2}-e^{-\alpha(X)/2}).\]
Harish-Chandra proved that if $u$ is an invariant eigendistribution with infinitesimal character $\eta\in (\mathfrak{c}^*)'$, then one can write 
\[\theta|_{(\mathfrak{h}_{\mathbb{R}})'}=\frac{\sum_{w\in W(\Delta(\mathfrak{g},\mathfrak{h}))} a_we^{w\eta(\mathfrak{h}_{\mathbb{R}},\Delta^+)}}{D(\mathfrak{h}_{\mathbb{R}},\Delta^+)}\]
where $W(\Delta(\mathfrak{g},\mathfrak{h}))$ denotes the Weyl group of the roots of $\mathfrak{g}$ with respect to $\mathfrak{h}$ and $a_w$ is a complex valued function that is constant on the connected components of $(\mathfrak{h}_{\mathbb{R}})'$ for every $w\in W(\Delta(\mathfrak{g},\mathfrak{h}))$ (See Section 8 of \cite{HC56}; see also Chapter 10 of \cite{Kna86} for an exposition).

\begin{definition} A \emph{coherent family of invariant eigendistributions} is a family $\{\theta_{\eta}\}$ of invariant eigendistributions depending on a parameter $\eta\in (\mathfrak{c}^*)'$ such that
\begin{enumerate}
\item $\theta_{\eta}$ has infinitesimal character $\eta$ for every $\eta\in (\mathfrak{c}^*)'$
\item for every real Cartan subalgebra $\mathfrak{h}_{\mathbb{R}}\subset \mathfrak{g}_{\mathbb{R}}$ and every choice of positive roots $\Delta^+\subset \Delta(\mathfrak{g},\mathfrak{h})$, we have an expression
\[\theta_{\eta}|_{(\mathfrak{h}_{\mathbb{R}})'}=\frac{\sum_{w\in W(\Delta(\mathfrak{g},\mathfrak{h}))} a_we^{w\eta(\mathfrak{h}_{\mathbb{R}},\Delta^+)}}{D(\mathfrak{h}_{\mathbb{R}},\Delta^+)}\]
where the locally constant functions $a_w$ are independent of $\eta$.
\end{enumerate}
\end{definition}

\noindent (Traditionally, one discusses coherent families of invariant eigendistributions on $G_{\mathbb{R}}$, which depend on a parameter varying over a certain translate of a lattice in $\mathfrak{c}^*$ \cite{Sch77}. However, for our applications it is useful to study invariant eigendistributions on $\mathfrak{g}_{\mathbb{R}}$ and let the parameter $\eta$ vary over all of $(\mathfrak{c}^*)'$ as on page 266 of \cite{Ro90}.)

The following fact is clear.

\begin{lemma} \label{lem:coherent_equal} If $\{\theta_{\eta}\}_{\eta\in (\mathfrak{c}^*)'}$ and $\{\tau_{\eta}\}_{\eta\in (\mathfrak{c}^*)'}$ are two coherent families of invariant eigendistributions and $\theta_{\eta_0}=\tau_{\eta_0}$ for some $\eta_0\in (\mathfrak{c}^*)'$, then for every $\eta\in (\mathfrak{c}^*)'$, $\theta_{\eta}=\tau_{\eta}$.
\end{lemma}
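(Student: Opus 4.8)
\textbf{Proof proposal for Lemma \ref{lem:coherent_equal}.}

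The plan is to exploit the fact that on each real Cartan subalgebra $\mathfrak{h}_{\mathbb{R}}$, the restriction of an invariant eigendistribution in a coherent family is completely pinned down by the locally constant coefficient functions $a_w$ together with the parameter $\eta$, via Harish-Chandra's formula
\[
\theta_{\eta}|_{(\mathfrak{h}_{\mathbb{R}})'}
=\frac{\sum_{w\in W(\Delta(\mathfrak{g},\mathfrak{h}))} a_w e^{w\eta(\mathfrak{h}_{\mathbb{R}},\Delta^+)}}{D(\mathfrak{h}_{\mathbb{R}},\Delta^+)}.
\]
First I would observe that the defining property (2) of a coherent family says precisely that the functions $a_w = a_w(\mathfrak{h}_{\mathbb{R}},\Delta^+)$ attached to $\{\theta_\eta\}$ do not depend on $\eta$, and similarly for the family $\{a_w'\}$ attached to $\{\tau_\eta\}$. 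Denote the coefficients of $\theta$ by $a_w$ and those of $\tau$ by $a_w'$. The equality $\theta_{\eta_0}=\tau_{\eta_0}$ then yields, for each fixed $(\mathfrak{h}_{\mathbb{R}},\Delta^+)$ and each connected component of $(\mathfrak{h}_{\mathbb{R}})'$,
\[
\sum_{w\in W(\Delta(\mathfrak{g},\mathfrak{h}))} a_w e^{w\eta_0(\mathfrak{h}_{\mathbb{R}},\Delta^+)}
=\sum_{w\in W(\Delta(\mathfrak{g},\mathfrak{h}))} a_w' e^{w\eta_0(\mathfrak{h}_{\mathbb{R}},\Delta^+)}
\]
after clearing the common Weyl denominator.

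The key step is then a linear independence argument: since $\eta_0$ is regular, the $|W(\Delta(\mathfrak{g},\mathfrak{h}))|$ exponential functions $X\mapsto e^{w\eta_0(\mathfrak{h}_{\mathbb{R}},\Delta^+)(X)}$, indexed by $w$, have pairwise distinct ``frequencies'' $w\eta_0$ (regularity of $\eta_0$ guarantees the stabilizer of $\eta_0$ in $W$ is trivial, so the $w\eta_0$ are distinct), hence are linearly independent as analytic functions on the component. Therefore $a_w = a_w'$ for all $w$, for every choice of Cartan subalgebra, positive system, and component. Now I would feed this back into Harish-Chandra's formula for an arbitrary $\eta\in(\mathfrak{c}^*)'$: since the coefficient functions agree and they are independent of $\eta$ by hypothesis, the restrictions $\theta_\eta|_{(\mathfrak{h}_{\mathbb{R}})'}$ and $\tau_\eta|_{(\mathfrak{h}_{\mathbb{R}})'}$ coincide on every real Cartan. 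Because an invariant eigendistribution is given by integration against its locally $L^1$ analytic restriction to the regular semisimple set $\mathfrak{g}_{\mathbb{R}}'$, and $\mathfrak{g}_{\mathbb{R}}'$ is covered (up to measure zero) by the $G_{\mathbb{R}}$-conjugates of the regular sets in the finitely many real Cartan subalgebras, agreement on all $(\mathfrak{h}_{\mathbb{R}})'$ forces $\theta_\eta=\tau_\eta$ as distributions on $\mathfrak{g}_{\mathbb{R}}$.

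The only mild subtlety — and the closest thing to an obstacle — is making sure the exponential linear independence is applied correctly: one must note that $a_w$ and $a_w'$ are constant on a fixed connected component, so on that component the identity is genuinely a finite $\mathbb{C}$-linear relation among the distinct exponentials $e^{w\eta_0(\cdot)}$, which then forces equality of coefficients component by component. (Strictly, one should also recall why distinct regular weights give linearly independent exponentials on an open subset of a real vector space, but this is standard.) Everything else is bookkeeping, which is why the lemma is labelled ``clear'' in the text; the proposal above simply makes the bookkeeping explicit.
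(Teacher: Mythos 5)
Your proof is correct and is precisely the argument the authors have in mind when they say the lemma is ``clear'': extract the coefficient functions $a_w$ from Harish-Chandra's formula, use linear independence of the exponentials $e^{w\eta_0}$ (valid since $\eta_0$ regular makes the $w\eta_0$ distinct) to conclude $a_w=a_w'$ on each component of each $(\mathfrak{h}_{\mathbb{R}})'$, and then read off $\theta_\eta=\tau_\eta$ for all $\eta$ from the $\eta$-independence of the coefficients.
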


Armed with this terminology, let us return to our previous setting with fixed data $(\mathcal{O},\Gamma,\mathfrak{q},\sigma_c)$. Fix a fundamental Cartan subalgebra $\mathfrak{h}_{\mathbb{R}}\subset \mathfrak{g}_{\mathbb{R}}$, choose $\lambda\in \sqrt{-1}\mathfrak{h}_{\mathbb{R}}^*\cap \mathcal{O}$, decompose $\mathfrak{q}_{\lambda}=\mathfrak{l}\oplus \mathfrak{n}$ with $\mathfrak{l}:=\mathfrak{g}(\lambda)$, and fix a choice of positive roots $\Delta^+\subset \Delta(\mathfrak{g},\mathfrak{h})$ such that $\Delta(\mathfrak{n},\mathfrak{h})\subset \Delta^+$. Then we may identify $\xi:=\lambda+\rho_{\mathfrak{l}}\in (\mathfrak{h}^*)'\simeq (\mathfrak{c}^*)'$ using our choice of positive roots. As in the proof of Lemma \ref{lem:dlogf}, we have
\[\mu_{\xi}^{-1}\mathcal{C}(\mathcal{O},\Gamma,\mathfrak{q},\sigma_c)=\left\{(\mathfrak{b},g\cdot \lambda-u\cdot \lambda)\mid g\in G_{\mathbb{R}}, u\in U, g\cdot \mathfrak{q}_{\lambda}=u\cdot \mathfrak{q}_{\lambda}, \mathfrak{b}\subset u\cdot \mathfrak{q}_{\lambda}\right\}.\]
Now, define $\mathcal{O}_{\rho(\mathfrak{n})}:=\op{Ad}^*(G_{\mathbb{R}})\cdot \rho(\mathfrak{n})$, define $\Gamma^{\rho(\mathfrak{n})}$ such that $\Gamma^{\rho(\mathfrak{n})}_{\rho(\mathfrak{n})}:=\rho(\mathfrak{n})$, and leave $\mathfrak{q}$, $\sigma_c$ as before. Then similar reasoning leads to
\begin{align*}
&\mu_{\rho}^{-1}\mathcal{C}(\mathcal{O}_{\rho(\mathfrak{n})},\Gamma^{\rho(\mathfrak{n})},\mathfrak{q},\sigma_c)\\
&=\left\{(\mathfrak{b},g\cdot \rho(\mathfrak{n})-u\cdot \rho(\mathfrak{n}))\mid g\in G_{\mathbb{R}}, u\in U, g\cdot \mathfrak{q}_{\lambda}=u\cdot \mathfrak{q}_{\lambda}, \mathfrak{b}\subset u\cdot \mathfrak{q}_{\lambda}\right\}.
\end{align*}

Note that 
\[\tau_{\eta}:=\mathcal{F}[\mu_{\eta}(\op{CC}(Rj_*\mathbb{C}_S))]\]
is a coherent family of invariant eigendistributions (see Theorem 1.4 and Remark 1.5 on pages 268-270 of \cite{Ro90}).

In Lemma \ref{lem:dlogf} (see also the discussion at the end of Section~\ref{subsec:homotopies}), we showed 
\[\mathcal{F}[\mathcal{C}(\mathcal{O},\Gamma,\mathfrak{q},\sigma_c)]=\mathcal{F}[\mu_{\xi}(\op{CC}(Rj_*\mathbb{C}_S))].\]
Replacing $\lambda$ by zero, the same argument yields
\[\mathcal{F}[\mathcal{C}(\mathcal{O}_{\rho(\mathfrak{n})},\Gamma^{\rho(\mathfrak{n})},\mathfrak{q},\sigma_c)]=\mathcal{F}[\mu_{\rho}(\op{CC}(Rj_*\mathbb{C}_S))].\]

Let us record what we have shown.

\begin{lemma} \label{lem:coherent1} There exists a coherent family of $G_{\mathbb{R}}$-invariant eigendistributions $\{\tau_{\eta}\}_{\eta\in (\mathfrak{c}^*)'}$ on $\mathfrak{g}_{\mathbb{R}}$ with 
\[\tau_{\rho}=\mathcal{F}[\mathcal{C}(\mathcal{O}_{\rho(\mathfrak{n})},\Gamma^{\rho(\mathfrak{n})},\mathfrak{q},\sigma_c)]\ \text{and}\ \tau_{\xi}=\mathcal{F}[\mathcal{C}(\mathcal{O},\Gamma,\mathfrak{q},\sigma_c)].\]
\end{lemma}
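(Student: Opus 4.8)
The plan is to prove Lemma~\ref{lem:coherent1} by directly verifying that the family $\{\tau_\eta\}$ obtained by Fourier-transforming the twisted momentum-map pushforward of the characteristic cycle of $Rj_!\mathbb{C}_S$ is coherent, and then identifying the two special values $\tau_\rho$ and $\tau_\xi$ with the Fourier transforms of the two contours appearing in the statement. The coherence is essentially already recorded: by Theorem~1.4 and Remark~1.5 of \cite{Ro90}, for a fixed admissible Lagrangian cycle $\mathcal C$ in $T^*X$ the assignment $\eta\mapsto \mathcal{F}[\mu_\eta(\mathcal C)]$ is a coherent family of invariant eigendistributions on $\mathfrak g_{\mathbb R}$, since varying $\eta$ over $(\mathfrak c^*)'$ only changes the "shift part" $\xi_x$ of the twisted momentum map $\mu_\eta(x,\zeta)=\xi_{x}+\zeta$ in a manner compatible with coherent continuation, while the cycle itself is fixed. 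Applying this with $\mathcal C=\op{CC}(Rj_!\mathbb{C}_S)$ gives the coherent family $\{\tau_\eta\}$.

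\medskip

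Next I would pin down the two values. The computation in Lemma~\ref{lem:dlogf} and the boundary identity $\partial\widetilde{\mathcal C}=[d\log(f|_S)]-\op{CC}(Rj_!\mathbb{C}_S)$ from Section~\ref{subsec:homotopies} show that, for $\xi=\lambda+\rho_{\mathfrak l}$, the cycle $\mu_\xi(\op{CC}(Rj_!\mathbb{C}_S))$ is homologous in $_{\prime}\mathcal H_{2m}(\Omega_\xi)$ to $(-1)^q\mathcal C(\mathcal O,\Gamma,\mathfrak q,\sigma_c)$; since homologous admissible contours have the same Fourier transform, $\tau_\xi=\mathcal{F}[\mu_\xi(\op{CC}(Rj_!\mathbb{C}_S))]=(-1)^q\cdot(-1)^q\mathcal{F}[\mathcal C(\mathcal O,\Gamma,\mathfrak q,\sigma_c)]=\mathcal{F}[\mathcal C(\mathcal O,\Gamma,\mathfrak q,\sigma_c)]$. (Here I must be careful about the sign bookkeeping: the $(-1)^q$ in Lemma~\ref{lem:dlogf} records the orientation discrepancy between $d\log(f|_S)$ and $\mathcal C(\mathcal O,\Gamma,\mathfrak q,\sigma_c)$, and it matches the $(-1)^q$ in the Schmid--Vilonen formula so that no residual sign survives in $\tau_\xi$ — but I would double-check exactly which statement carries the sign before committing to the final normalization in the displayed equation.) For $\tau_\rho$ I would observe that the \emph{entire} argument of Lemma~\ref{lem:dlogf} and Section~\ref{subsec:homotopies} is insensitive to the specific value of the elliptic parameter: the real algebraic function $f$ was built from the finite-dimensional representation of highest weight $\lambda+\rho(\mathfrak n)$, the same construction with $\lambda$ replaced by $0$ (highest weight $\rho(\mathfrak n)$, which is always integral since all roots of $\mathfrak n$ are imaginary in the elliptic case) produces a function whose differential flow connects $\op{CC}(Rj_!\mathbb{C}_S)$ to $\mu_\rho^{-1}\mathcal C(\mathcal O_{\rho(\mathfrak n)},\Gamma^{\rho(\mathfrak n)},\mathfrak q,\sigma_c)$. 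Crucially the parabolic $\mathfrak q_\lambda=\mathfrak l\oplus\mathfrak n$, the orbit $S$ of Borels, and hence the sheaf $Rj_!\mathbb{C}_S$ itself do not change when $\lambda$ is replaced by $0$, so the \emph{same} characteristic cycle reappears and only the momentum map shifts from $\mu_\xi$ to $\mu_\rho$. This gives $\tau_\rho=\mathcal{F}[\mu_\rho(\op{CC}(Rj_!\mathbb{C}_S))]=\mathcal{F}[\mathcal C(\mathcal O_{\rho(\mathfrak n)},\Gamma^{\rho(\mathfrak n)},\mathfrak q,\sigma_c)]$.

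\medskip

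The main obstacle I anticipate is not any single hard computation but the consistency of sign and orientation conventions across three different sources (\cite{Ro90}, \cite{SV96}, \cite{SV98}) together with the in-house orientation on $\mathcal C(\mathcal O,\Gamma,\mathfrak q,\sigma_c)$ fixed in Section~\ref{subsec:Rossmann_characters}: I need the $(-1)^q$ from the Schmid--Vilonen character identity \eqref{eq:SV_character} to cancel exactly against the $(-1)^q$ orientation discrepancy computed in Lemma~\ref{lem:dlogf}, so that $\tau_\xi$ and $\tau_\rho$ come out with the stated (sign-free) normalizations and the coherent family is genuinely the \emph{same} family for both values. Beyond that, one should note that $\tau_\rho$ as defined via $\mu_\rho$ genuinely lands in $_{\prime}\mathcal H_{2m}(\Omega_\rho)$ — i.e.\ that $\rho$ is still regular (it is, being $\mathfrak c$'s half-sum of positive roots) so that $\Omega_\rho$ is a single regular orbit of the correct dimension and the Fourier-transform machinery of Section~\ref{subsec:Rossmann_characters} applies verbatim. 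Once these bookkeeping points are settled, the lemma follows immediately from the coherence assertion of \cite{Ro90} applied to the single fixed cycle $\op{CC}(Rj_!\mathbb{C}_S)$ evaluated at the two parameters $\rho$ and $\xi$.
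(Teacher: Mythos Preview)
Your approach is correct and essentially identical to the paper's: define $\tau_\eta$ as the Fourier transform of $\mu_\eta$ applied to the fixed characteristic cycle, invoke Rossmann's Theorem~1.4 and Remark~1.5 for coherence, and then identify the two special values $\tau_\xi$ and $\tau_\rho$ by running the homotopy argument of Section~\ref{subsec:homotopies} once with parameter $\lambda$ and once with parameter $0$ (the sheaf, the set $S$, and hence the characteristic cycle are unchanged). Two small corrections: the sheaf throughout should be $Rj_*\mathbb{C}_S$, not $Rj_!\mathbb{C}_S$ --- the open embedding theorem and \eqref{eq:SV_character} are both formulated for $Rj_*$, which arises as the Verdier dual of $j_!\mathbb{C}_S$; and your sign bookkeeping has one $(-1)^q$ too many --- Lemma~\ref{lem:dlogf} contributes a single $(-1)^q$, so to obtain the lemma exactly as stated one absorbs it into the definition by setting $\tau_\eta:=(-1)^q\mathcal{F}[\mu_\eta(\op{CC}(Rj_*\mathbb{C}_S))]$, which is still coherent (the paper is silent on this point, so your caution is well placed).
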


Next, we will show the following result.

\begin{lemma} \label{lem:coherent2} There exists a coherent family of $G_{\mathbb{R}}$-invariant eigendistributions $\{\theta_{\eta}\}_{\eta\in (\mathfrak{c}^*)'}$ on $\mathfrak{g}_{\mathbb{R}}$ with
\[\theta_{\rho}=\theta(\mathcal{O}_{\rho(\mathfrak{n})},\Gamma^{\rho(\mathfrak{n})})\ \text{and}\ \theta_{\xi}=\theta(\mathcal{O},\Gamma).\]
\end{lemma}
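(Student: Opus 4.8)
Here is a plan for the proof of Lemma~\ref{lem:coherent2}.

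The plan is to exhibit a single coherent family $\{\theta_{\eta}\}_{\eta\in(\mathfrak{c}^*)'}$ whose members at $\eta=\xi$ and $\eta=\rho$ are the two desired characters, by exploiting the fact that $\pi(\mathcal{O},\Gamma)$ and $\pi(\mathcal{O}_{\rho(\mathfrak{n})},\Gamma^{\rho(\mathfrak{n})})$ are both cohomologically induced, in the good range, from a one-dimensional character of $L_{\mathbb{R}}$ via the \emph{same} $\theta$-stable parabolic $\mathfrak{q}$. The point is that once one passes to the Lie algebra and records the infinitesimal character explicitly, the Harish--Chandra expression $\sum_w a_w e^{w\eta}/D$ for such a character retains only the combinatorial data of $(\mathfrak{q},\sigma_c)$, not the inducing character; so the same coefficient functions $a_w$ govern both, and one may continue coherently in $\eta$.

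First I would check that $\mathfrak{q}=\mathfrak{l}\oplus\mathfrak{n}$ serves for $\mathcal{O}_{\rho(\mathfrak{n})}$ as well. The pairing $\langle\rho(\mathfrak{n}),\,\cdot\,\rangle$ is additive on roots, vanishes on the simple roots of $\mathfrak{l}$ (since $\rho(\mathfrak{n})$ is $W(\mathfrak{l})$-invariant), and is strictly positive on each simple root of $\mathfrak{g}$ lying outside $\mathfrak{l}$ (because $\langle\rho,\gamma\rangle=\tfrac12\langle\gamma,\gamma\rangle>0$ while $\langle\rho_{\mathfrak{l}},\gamma\rangle\le 0$ for such $\gamma$); since every $\alpha\in\Delta(\mathfrak{n},\mathfrak{h})$ involves a simple root outside $\mathfrak{l}$ with positive coefficient, $\langle\rho(\mathfrak{n}),\alpha^{\vee}\rangle>0$ for all $\alpha\in\Delta(\mathfrak{n},\mathfrak{h})$. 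Hence $\mathfrak{g}(\rho(\mathfrak{n}))=\mathfrak{l}$, so $\mathfrak{q}$ is the (unique, $\theta$-stable) admissible polarization of the elliptic orbit $\mathcal{O}_{\rho(\mathfrak{n})}$, and $\rho(\mathfrak{n})+\rho_{\mathfrak{l}}=\rho$ is strictly $\Delta(\mathfrak{n},\mathfrak{h})$-dominant, so $\mathcal{O}_{\rho(\mathfrak{n})}$ is in the good range (\ref{eq:good_range}). Therefore $\pi(\mathcal{O}_{\rho(\mathfrak{n})},\Gamma^{\rho(\mathfrak{n})})$ is the irreducible module $(I_{\mathfrak{q},L_{\mathbb{R}}\cap K_{\mathbb{R}}}^{\mathfrak{g},K_{\mathbb{R}}})^{s}(\Gamma^{\rho(\mathfrak{n})}_{\rho(\mathfrak{n})}\otimes e^{\rho(\mathfrak{n})})$ with infinitesimal character $\rho$, cohomologically induced via $\mathfrak{q}$ from the one-dimensional character of $L_{\mathbb{R}}$ of differential $2\rho(\mathfrak{n})$, exactly as $\pi(\mathcal{O},\Gamma)=(I_{\mathfrak{q},L_{\mathbb{R}}\cap K_{\mathbb{R}}}^{\mathfrak{g},K_{\mathbb{R}}})^{s}(\Gamma_{\lambda}\otimes e^{\rho(\mathfrak{n})})$ is cohomologically induced via $\mathfrak{q}$ from the one-dimensional character $\Gamma_{\lambda}\otimes e^{\rho(\mathfrak{n})}$ of differential $\lambda+\rho(\mathfrak{n})$, with infinitesimal character $\xi=\lambda+\rho_{\mathfrak{l}}$.

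Next I would invoke the character formula for good-range cohomologically induced modules (the Zuckerman/Hecht--Schmid formula; see Chapter~10 of \cite{Kna86}, \cite{Vog81}, and \cite{KV95}): for every real Cartan subalgebra $\mathfrak{h}'_{\mathbb{R}}\subset\mathfrak{g}_{\mathbb{R}}$ and positive system $\Delta^{+}\subset\Delta(\mathfrak{g},\mathfrak{h}')$, the Lie algebra analogue of the character of such a module has the form
\[
\theta|_{(\mathfrak{h}'_{\mathbb{R}})'}=\frac{\sum_{w\in W(\Delta(\mathfrak{g},\mathfrak{h}'))} a_w\, e^{w\eta(\mathfrak{h}'_{\mathbb{R}},\Delta^{+})}}{D(\mathfrak{h}'_{\mathbb{R}},\Delta^{+})},
\]
where $\eta$ is the infinitesimal character and the locally constant coefficients $a_w$ depend only on the pair $(\mathfrak{q},\sigma_c)$ (through the relative position of the $W$-translates of $\mathfrak{h}'$, $\mathfrak{q}$ and $\mathfrak{k}$), not on the inducing one-dimensional character. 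Applying this to $\theta(\mathcal{O},\Gamma)$ yields coefficient functions $\{a_w\}$; I would then \emph{define} $\theta_{\eta}$ for each $\eta\in(\mathfrak{c}^*)'$ by the displayed formula with these same $\{a_w\}$. Since the compatibility (jump) relations among the $a_w$ that make these Cartan-by-Cartan formulas patch to a genuine $G_{\mathbb{R}}$-invariant eigendistribution on $\mathfrak{g}_{\mathbb{R}}$ are $\eta$-independent linear relations, and they hold at $\eta=\xi$ because $\theta(\mathcal{O},\Gamma)$ is such a distribution, they hold for all $\eta$; thus $\{\theta_{\eta}\}$ is a coherent family of invariant eigendistributions (the standard coherent-continuation mechanism, cf.\ \cite{Sch77}, Section~8 of \cite{HC56}, and page~266 of \cite{Ro90}). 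By construction $\theta_{\xi}=\theta(\mathcal{O},\Gamma)$, and by the previous paragraph the character $\theta(\mathcal{O}_{\rho(\mathfrak{n})},\Gamma^{\rho(\mathfrak{n})})$ has the same coefficients $\{a_w\}$ and infinitesimal character $\rho$, so $\theta_{\rho}=\theta(\mathcal{O}_{\rho(\mathfrak{n})},\Gamma^{\rho(\mathfrak{n})})$, as required.

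The main obstacle will be the third step: pinning down that the coefficients $a_w$ are genuinely independent of the inducing one-dimensional character, keeping careful track of the $\rho(\mathfrak{n})$-shift, the normalization of cohomological induction in \cite{KV95}, the Duflo double cover (so that $\Gamma_{\lambda}\otimes e^{\rho(\mathfrak{n})}$ and $\Gamma^{\rho(\mathfrak{n})}_{\rho(\mathfrak{n})}\otimes e^{\rho(\mathfrak{n})}$ are treated as honest characters of $L_{\mathbb{R}}$), and any disconnectedness of $L_{\mathbb{R}}$ (handled via (\ref{eq:M_extension}) as in Section~\ref{subsec:VZ_construction}); once a clean form of the character formula is in hand, the coherent-continuation step and the final identifications are routine. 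The remaining assertions of the subsection then follow by combining this lemma with Lemma~\ref{lem:coherent1}, Lemma~\ref{lem:coherent_equal}, and the integral case established in Section~\ref{subsec:homotopies} (the integrality condition (\ref{eq:integral}) holds at $\eta=\rho$ since $2\rho(\mathfrak{n})$ is integral).
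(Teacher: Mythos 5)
Your overall strategy---extend $\theta(\mathcal{O},\Gamma)$ to a coherent family by recording its Harish--Chandra coefficients $a_w$ on each Cartan and then continuing in $\eta$---is the right framework, and your preliminary check that $\mathfrak{q}$ is the admissible polarization of $\mathcal{O}_{\rho(\mathfrak{n})}$ with $\rho$ dominant is correct. But the central step is asserted rather than proved. You \emph{invoke} that ``the locally constant coefficients $a_w$ depend only on the pair $(\mathfrak{q},\sigma_c)$, not on the inducing one-dimensional character,'' citing the Zuckerman/Hecht--Schmid formula. No such citable result gives you this directly; the Hecht--Schmid character identity and the Blattner-type formulas express the character of $(I_{\mathfrak{q}}^{\mathfrak{g}})^s(\cdot)$ in terms of standard-module characters, but you would still need to know (i) that those standard-module characters vary coherently in the inducing parameter, and (ii) that the multiplicities with which they appear are $\eta$-independent. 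That is precisely the content of the lemma, not a step one may cite away, and the ``main obstacle'' you flag is the entire argument, not a detail to clean up afterwards.

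The paper closes this gap by a concrete chain of reductions: it begins with Zuckerman's alternating-sum formula for the trivial representation of $L_{\mathbb{R}}$ (which supplies fixed, $\eta$-independent signs $\epsilon(H,\Lambda)$), tensors by $\Gamma_\eta^\eta\otimes e^{\rho(\mathfrak{n})}$ and applies $I_{\mathfrak{q}_\lambda}^{\mathfrak{g}}$ to get an identity in the Grothendieck group, then uses induction in stages (Corollary 11.86 of \cite{KV95}) and the transfer theorem (Proposition 11.128 of \cite{KV95}) to rewrite each summand as a real parabolic induction from a relative discrete series. Coherence of the family then follows from two genuinely known ingredients: Harish--Chandra's discrete-series characters vary coherently in the parameter, and the induced character formula shows that real parabolic induction preserves coherent families. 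Your sketch compresses this whole chain into a claim about $a_w$ that would, if unpacked, require essentially the same reductions; without them the ``jump-relation'' argument in your third step has nothing to act on, since you have not yet established that $\theta(\mathcal{O},\Gamma)$ and $\theta(\mathcal{O}_{\rho(\mathfrak{n})},\Gamma^{\rho(\mathfrak{n})})$ share the same coefficient data $\{a_w\}$.
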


Notice that $(\mathcal{O}_{\rho(\mathfrak{n})},\Gamma^{\rho(\mathfrak{n})},\mathfrak{q},\sigma_c)$ satisfies the integrality condition (\ref{eq:integral}). In particular, the results of Section \ref{subsec:homotopies} imply $\tau_{\rho}=\theta_{\rho}$. Lemma \ref{lem:coherent1}, Lemma \ref{lem:coherent2}, and Lemma \ref{lem:coherent_equal} together imply $\theta_{\eta}=\tau_{\eta}$ for all $\eta\in (\mathfrak{c}^*)'$. In particular, $\theta_{\xi}=\tau_{\xi}$ and we deduce (\ref{eq:final_equation}). Therefore, in order to establish Theorem \ref{thm:main}, all we have left to do is to prove Lemma \ref{lem:coherent2}.
\bigskip

Next, we give a proof of Lemma \ref{lem:coherent2}. For this, we need to state a formula of Zuckerman; we roughly follow the treatment of Adams \cite{Ada11}. Let $H\subset L$ be a $\sigma$-stable Cartan subgroup. If $\Delta^+\subset \Delta(\mathfrak{l},\mathfrak{h})$ is a collection of positive roots, define
\[\rho(\Delta^+):=\frac{1}{2}\sum_{\alpha\in \Delta^+} \alpha.\]
Define $\mathcal{P}(H,\rho)$ to be the set of pairs $(\Lambda,\Delta^+)$ where $\Delta^+\subset \Delta(\mathfrak{l},\mathfrak{h})$ is a collection of positive roots and $\Lambda\colon \widetilde{H}^{\rho(\Delta^+)}\rightarrow \mathbb{C}^{\times}$ is a one-dimensional, holomorphic representation of the $\rho(\Delta^+)$-double cover of $H$ with differential $d\Lambda=\rho(\Delta^+)$. 

Given an element $(\Lambda,\Delta^+)\in \mathcal{P}(H,\rho)$, one associates to $(\Lambda,\Delta^+)$ a reductive subgroup $M_{\mathbb{R}}\subset L_{\mathbb{R}}$ that is $\sigma_c$-stable and a relative Harish-Chandra discrete series representation $I_{M_{\mathbb{R}}}(H,\widetilde{\Lambda})$ where $\widetilde{\Lambda}$ (denote $\Lambda_{M_{\mathbb{R}}}$ in \cite{Ada11}) is obtained from $\Lambda$ by tensoring with a certain character taking values $\pm 1, \pm \sqrt{-1}$ as in (4.4) of \cite{Ada11}. Then, for a suitable real parabolic $P_{\mathbb{R}}\subset L_{\mathbb{R}}$ with Levi factor $M_{\mathbb{R}}$, one defines
\[I(H,\Lambda):=L^2(L_{\mathbb{R}}/P_{\mathbb{R}},I_{M_{\mathbb{R}}}(H,\widetilde{\Lambda})\otimes \mathcal{D}^{1/2})\]
\noindent where $\mathcal{D}^{1/2}$ denotes the bundle of complex half densities on $L_{\mathbb{R}}/P_{\mathbb{R}}$ (see Definition 4.7 on page 8 of \cite{Ada11} for a more thorough discussion). We write $I(H,\Lambda)_{\text{HC}}$ for the Harish-Chandra module of the admissible representation $I(H,\Lambda)$.

Zuckerman gave a formula for the trivial representation $\mathbb{C}^{L_{\mathbb{R}}}$ of the group $L_{\mathbb{R}}$ as an alternating sum of standard modules in the Grothendieck group of Harish-Chandra modules of $L_{\mathbb{R}}$ (see pages 710-711 of \cite{Vog81} and Section 10 of \cite{Ada11}). Zuckerman's formula reads
\begin{equation}\label{eq:Zuckerman_formula_trivial}
\mathbb{C}^{L_{\mathbb{R}}}=\sum_{H} \sum_{(\Lambda,\Delta^+)\in \mathcal{P}(H,\rho)}\epsilon(H,\Lambda) I(H,\Lambda)_{\text{HC}}
\end{equation}
where the first sum is over a set of representatives of the $G_{\mathbb{R}}$-conjugacy classes of $\sigma$-stable Cartan subgroups $H\subset L$ and $\epsilon(H,\Lambda)\in\{\pm 1\}$. 

Next, fix a choice of positive roots $\Delta^+\subset \Delta(\mathfrak{l},\mathfrak{h})$ and fix $(\Lambda,\Delta^+)\in \mathcal{P}(H,\rho)$. Let $\Delta^+(\mathfrak{m},\mathfrak{h}):=\Delta^+\cap \Delta(\mathfrak{m},\mathfrak{h})$ be the collection of positive roots determined by $d\widetilde{\Lambda}$, and form the Borel subalgebra $\mathfrak{d}_{\mathfrak{m}}$ by
\[\mathfrak{d}_{\mathfrak{m}}:=\mathfrak{h}\oplus \sum_{\alpha\in \Delta^+(\mathfrak{m},\mathfrak{h})} \mathfrak{g}_{\alpha}=\mathfrak{h}\oplus \mathfrak{n}_{\mathfrak{m}}.\]
Further, define
\[\rho_{\mathfrak{m}}=\frac{1}{2}\sum_{\alpha\in \Delta^+(\mathfrak{m},\mathfrak{h})}\alpha.\]
Then we have
\[I_{M_{\mathbb{R}}}(H,\widetilde{\Lambda})_{\text{HC}}=(I_{\mathfrak{d}_{\mathfrak{m}},H_{\mathbb{R}}\cap K_{\mathbb{R}}}^{\mathfrak{m},M_{\mathbb{R}}\cap K_{\mathbb{R}}})^{r(\Delta^+)}(\widetilde{\Lambda}\otimes e^{\rho_{\mathfrak{m}}})\]
where $r(\Delta^+)=\dim(\mathfrak{n}_{\mathfrak{m}}\cap \mathfrak{k})$ (see Section 11.8 of \cite{KV95}). Let $\mathfrak{p}$ denote the complexification of the parabolic subalgebra $\mathfrak{p}_{\mathbb{R}}$ and note (see Section 11.2 of \cite{KV95})
\[I(H,\Lambda)_{\text{HC}}:=I_{\mathfrak{p},M_{\mathbb{R}}\cap K_{\mathbb{R}}}^{\mathfrak{l},L_{\mathbb{R}}\cap K_{\mathbb{R}}}(I_{M_{\mathbb{R}}}(H,\widetilde{\Lambda})_{\text{HC}}\otimes e^{\rho(\mathfrak{n}_{\mathfrak{p}})})\]
where
\[\mathfrak{p}=\mathfrak{m}\oplus \mathfrak{n}_{\mathfrak{p}}\]
is a Levi decomposition and
\[\rho(\mathfrak{n}_{\mathfrak{p}}):=\frac{1}{2}\sum_{\alpha\in \Delta(\mathfrak{n}_{\mathfrak{p}},\mathfrak{h})}\alpha.\]
Moreover, define the Borel subalgebra $\mathfrak{d}_{\mathfrak{l}}$ by
\[\mathfrak{d}_{\mathfrak{l}}:=
 \mathfrak{h}\oplus \mathfrak{n}_{\mathfrak{m}}
 \oplus \mathfrak{n}_{\mathfrak{p}}.\]

By Theorem 5.35 and Theorem 5.109 of \cite{KV95}, we have
\[(I_{\mathfrak{d}_{\mathfrak{m}},H_{\mathbb{R}}\cap K_{\mathbb{R}}}^{\mathfrak{m},M_{\mathbb{R}}\cap K_{\mathbb{R}}})^p(\widetilde{\Lambda}\otimes e^{\rho_{\mathfrak{m}}})=0\ \text{if}\ p\neq r(\Delta^+).\]
Therefore, we may apply induction in stages (see Corollary 11.86 on page 683 of \cite{KV95}) to obtain
\[I(H,\Lambda)_{\text{HC}}=(I_{\mathfrak{d}_{\mathfrak{l}}, H_{\mathbb{R}}\cap K_{\mathbb{R}}}^{\mathfrak{l},L_{\mathbb{R}}\cap K_{\mathbb{R}}})^{r(\Delta^+)}(\widetilde{\Lambda}\otimes e^{\rho_{\mathfrak{m}}+\rho(\mathfrak{n}_{\mathfrak{p}})}).\]
Next, for each $(\Lambda,\Delta^+)\in \mathcal{P}(H,\Lambda)$,
 we form the Borel subalgebra 
\[\mathfrak{d}(\Delta^+)=\mathfrak{h}\oplus\mathfrak{n}_{\mathfrak{m}}
 \oplus \mathfrak{n}_{\mathfrak{p}}\oplus \mathfrak{n}\]
where $\mathfrak{n}\subset \mathfrak{q}_{\lambda}$ denotes the nilradical of $\mathfrak{q}_{\lambda}$. 

Now, assume $\eta\in \sqrt{-1}\mathfrak{h}_{\mathbb{R}}^*$ with
\begin{equation}\label{eq:centralizer_condition}
\mathfrak{g}(\eta)=\mathfrak{g}(\lambda)=\mathfrak{l}
\end{equation}
and 
\begin{equation}\label{eq:dominance_condition}
\langle \eta+\rho(\Delta^+),\alpha^{\vee}\rangle> 0
\end{equation}
for all $\alpha\in \Delta(\mathfrak{n},\mathfrak{h})$ and some (equivalently any) choice of positive roots $\Delta^+\subset \Delta(\mathfrak{l},\mathfrak{h})$. Observe that $\eta=\lambda$ and $\eta=\rho(\mathfrak{n})$ both satisfy these conditions. Consider an elliptic orbital parameter $(\mathcal{O}_{\eta},\Gamma^{\eta})$ where $\eta$ satisfies (\ref{eq:centralizer_condition}) and (\ref{eq:dominance_condition}), $\mathcal{O}_{\eta}:=\op{Ad}^*(G_{\mathbb{R}})\cdot \eta$, and $\Gamma^{\eta}_{\eta}=e^{\eta}$. 

A second application of induction in stages yields 
\begin{align*}
&(I_{\mathfrak{q}_{\lambda},L_{\mathbb{R}}
 \cap K_{\mathbb{R}}}^{\mathfrak{g},K_{\mathbb{R}}})^{s}
(I(H,\Lambda)_{\text{HC}}\otimes
 (\Gamma_{\eta}^{\eta}\otimes e^{\rho(\mathfrak{n})})) \\
\simeq\,
 &(I_{\mathfrak{q}_{\lambda},L_{\mathbb{R}}\cap
 K_{\mathbb{R}}}^{\mathfrak{g},K_{\mathbb{R}}})^{s}
((I_{\mathfrak{d}_{\mathfrak{l}}, H_{\mathbb{R}}\cap
 K_{\mathbb{R}}}^{\mathfrak{l},
 L_{\mathbb{R}}\cap K_{\mathbb{R}}})^{r(\Delta^+)}
(\widetilde{\Lambda}\otimes e^{\rho_{\mathfrak{m}}
 +\rho(\mathfrak{n}_{\mathfrak{p}})}
\otimes \Gamma_{\eta}^{\eta}\otimes e^{\rho(\mathfrak{n})})) \\
\simeq\,
 &(I_{\mathfrak{d}(\Delta^+),H_{\mathbb{R}}\cap
 K_{\mathbb{R}}}^{\mathfrak{g},K_{\mathbb{R}}})^{s+r(\Delta^+)}
(\widetilde{\Lambda}\otimes \Gamma_{\eta}^{\eta}\otimes
 e^{\rho_{\mathfrak{m}}+\rho(\mathfrak{n}_{\mathfrak{p}})
 +\rho(\mathfrak{n})}).
\end{align*}
Therefore, tensoring each representation of (\ref{eq:Zuckerman_formula_trivial}) with the character $\Gamma_{\eta}^{\eta}\otimes e^{\rho(\mathfrak{n})}$ and applying the functor $I_{\mathfrak{q}_{\lambda},L_{\mathbb{R}}\cap K_{\mathbb{R}}}^{\mathfrak{g},K_{\mathbb{R}}}$, one obtains an identity in the Grothendieck group of finite length $(\mathfrak{g},K_{\mathbb{R}})$-modules
\begin{align*}
&(I_{\mathfrak{q}_{\lambda},L_{\mathbb{R}}\cap K_{\mathbb{R}}}^{\mathfrak{g},K_{\mathbb{R}}})^s(\Gamma_{\eta}^{\eta}\otimes e^{\rho(\mathfrak{n})}) \numberthis \label{eq:Zuckerman_formula} \\
=&\sum_H \sum_{(\Lambda,\Delta^+)\in \mathcal{P}(H,\Lambda)} \epsilon(H,\Lambda) (I_{\mathfrak{d}(\Delta^+),H_{\mathbb{R}}\cap K_{\mathbb{R}}}^{\mathfrak{g},K_{\mathbb{R}}})^{s+r(\Delta^+)}(\widetilde{\Lambda}\otimes \Gamma_{\eta}^{\eta}\otimes e^{\rho_{\mathfrak{m}}+\rho(\mathfrak{n}_{\mathfrak{p}})
 +\rho(\mathfrak{n})}).
\end{align*}

For every $\sigma$-stable Cartan subgroup $H\subset G$ and each pair $(H,\Delta^+)\in \mathcal{P}(H,\rho)$, we obtain a choice of positive roots $\Delta^+\cup \Delta(\mathfrak{n},\mathfrak{h})\subset \Delta(\mathfrak{g},\mathfrak{h})$.
Let $\Delta^+_{\text{real}}$ and $\Delta^+_{\text{imag}}$ denote the subsets of $\Delta^+\cup \Delta(\mathfrak{n},\mathfrak{h})$ consisting of real and imaginary roots respectively. Proposition 11.126 on page 705 of \cite{KV95} guarantees the existence of a Borel subalgebra $\mathfrak{b}(\Delta^+)$ of \emph{strict type L} associated to $\Delta^+_{\text{real}}$ and $\Delta^+_{\text{imag}}$ (see pages 704-705 of \cite{KV95} for a definition). Define $\Delta^+(\mathfrak{g},\mathfrak{h})\subset \Delta(\mathfrak{g},\mathfrak{h})$ to be the collection of roots for which
\[\mathfrak{b}(\Delta^+)=\mathfrak{h}\oplus \sum_{\alpha\in \Delta^+(\mathfrak{g},\mathfrak{h})}\mathfrak{g}_{\alpha}=\mathfrak{h}\oplus \mathfrak{n}(\Delta^+),\]
and define
\[\delta:=\frac{1}{2}\sum_{\alpha\in \Delta^+(\mathfrak{g},\mathfrak{h})}\alpha,\ s(\Delta^+)=\dim(\mathfrak{n}(\Delta^+)\cap \mathfrak{k}).\] 
By Proposition 11.128 on page 706 of \cite{KV95}, one has an isomorphism 
\begin{equation}\label{eq:transfer}
(I_{\mathfrak{d}(\Delta^+),H_{\mathbb{R}}\cap K_{\mathbb{R}}}^{\mathfrak{g},K_{\mathbb{R}}})^{s+r(\Delta^+)}(\widetilde{\Lambda}\otimes \Gamma_{\eta}^{\eta}\otimes e^{\rho_{\mathfrak{m}}+\rho(\mathfrak{n}_{\mathfrak{p}})
 +\rho(\mathfrak{n})})\simeq (I_{\mathfrak{b}(\Delta^+),H_{\mathbb{R}}\cap K_{\mathbb{R}}}^{\mathfrak{g},K_{\mathbb{R}}})^{s(\Delta^+)}(\widetilde{\Lambda}\otimes \Gamma_{\eta}^{\eta}\otimes e^{\delta}).
\end{equation}
Now, write $\mathfrak{h}=\mathfrak{t}\oplus \mathfrak{a}$,
where $\mathfrak{t}$ is the +1-eigenspace of the action of $\theta:=\sigma\sigma_c$ on $\mathfrak{h}$ and $\mathfrak{a}$ is the $-1$-eigenspace of $\theta$. Let $S$ denote the centralizer of $\mathfrak{a}$ in $G$ with Lie algebra $\mathfrak{s}$, let $\Delta^+(\mathfrak{s},\mathfrak{h}):=\Delta^+(\mathfrak{g},\mathfrak{h})\cap \Delta(\mathfrak{s},\mathfrak{h})$, and let 
\[\mathfrak{p}(\Delta^+):=\mathfrak{s}\oplus \sum_{\alpha\in \Delta^+(\mathfrak{g},\mathfrak{h})\setminus \Delta^+(\mathfrak{s},\mathfrak{h})}\mathfrak{g}_{\alpha}.\]
Then $\mathfrak{p}(\Delta^+)$ is a $\sigma$-stable parabolic subalgebra of $\mathfrak{g}$. Next, we define 
a Borel subalgebra of $\mathfrak{s}$, 
\[\mathfrak{b}(\mathfrak{s},\Delta^+):=\mathfrak{h}\oplus \sum_{\alpha\in \Delta^+(\mathfrak{s},\mathfrak{h})}\mathfrak{g}_{\alpha}.\]
Using induction in stages, we have 
\begin{align*}
& (I_{\mathfrak{b}(\Delta^+),H_{\mathbb{R}}\cap K_{\mathbb{R}}}^{\mathfrak{g},K_{\mathbb{R}}})^{s(\Delta^+)}(\widetilde{\Lambda}\otimes \Gamma_{\eta}^{\eta}\otimes e^{\delta}) \numberthis \label{eq:stages} \\
& \simeq I_{\mathfrak{p}(\Delta^+),S_{\mathbb{R}}\cap K_{\mathbb{R}}}^{\mathfrak{g},K_{\mathbb{R}}}(I_{\mathfrak{b}(\mathfrak{s},\Delta^+),H_{\mathbb{R}}\cap K_{\mathbb{R}}}^{\mathfrak{s},S_{\mathbb{R}}\cap K_{\mathbb{R}}})^{s(\Delta^+)}(\widetilde{\Lambda}\otimes \Gamma_{\eta}^{\eta}\otimes e^{\delta}).
\end{align*}

The representations $(I_{\mathfrak{b}(\mathfrak{s},\Delta^+),H_{\mathbb{R}}\cap K_{\mathbb{R}}}^{\mathfrak{s},S_{\mathbb{R}}\cap K_{\mathbb{R}}})^{s(\Delta^+)}(\widetilde{\Lambda}\otimes \Gamma_{\eta}^{\eta}\otimes e^{\delta})$ are Harish-Chandra modules of relative discrete series representations (see Section XI.8 of \cite{KV95}). Let 
\[\theta(I_{\mathfrak{b}(\mathfrak{s},\Delta^+),H_{\mathbb{R}}\cap K_{\mathbb{R}}}^{\mathfrak{s},S_{\mathbb{R}}\cap K_{\mathbb{R}}})^{s(\Delta^+)}(\widetilde{\Lambda}\otimes \Gamma_{\eta}^{\eta}\otimes e^{\delta})\]
denote the Lie algebra analogue of the character of the corresponding unitary representation. It is implicit in Harish-Chandra's work on discrete series \cite{HC65c}, \cite{HC66} that these relative discrete series characters vary coherently in the parameter $\eta$ (as long as $\eta$ satisfies (\ref{eq:centralizer_condition}) and (\ref{eq:dominance_condition})); this observation was used by Schmid in \cite{Sch77} to motivate the definition of coherent family. Next, the functors $I_{\mathfrak{p}(\Delta^+),S_{\mathbb{R}}\cap K_{\mathbb{R}}}^{\mathfrak{g},K_{\mathbb{R}}}$ are infinitesimal versions of the global real parabolic induction functor. By the induced character formula (see for instance Section X.3 of \cite{Kna86}), real parabolic induction takes coherent families to coherent families. Therefore, if 
\[\theta((I_{\mathfrak{b}(\Delta^+),H_{\mathbb{R}}\cap K_{\mathbb{R}}}^{\mathfrak{g},K_{\mathbb{R}}})^{s(\Delta^+)}(\widetilde{\Lambda}\otimes \Gamma_{\eta}^{\eta}\otimes e^{\delta}))\]
denotes the Lie algebra analogue of the character of the unitary representation with Harish-Chandra module $(I_{\mathfrak{b}(\Delta^+),H_{\mathbb{R}}\cap K_{\mathbb{R}}}^{\mathfrak{g},K_{\mathbb{R}}})^{s(\Delta^+)}(\widetilde{\Lambda}\otimes \Gamma_{\eta}^{\eta}\otimes e^{\delta})$, then this character varies coherently in $\eta$ (as long as $\eta$ satisfies (\ref{eq:centralizer_condition}) and (\ref{eq:dominance_condition})).

Combining (\ref{eq:Zuckerman_formula}), (\ref{eq:transfer}), and the last remark, we have
\[\theta(\mathcal{O}_{\eta},\Gamma^{\eta})=\sum_H \sum_{(\Lambda,\Delta^+)\in \mathcal{P}(H,\Lambda)} \epsilon(H,\Lambda) \theta((I_{\mathfrak{b}(\Delta^+),H_{\mathbb{R}}\cap K_{\mathbb{R}}}^{\mathfrak{g},K_{\mathbb{R}}})^{s(\Delta^+)}(\widetilde{\Lambda}\otimes \Gamma_{\eta}^{\eta}\otimes e^{\delta})).\]
This exhibits $\theta(\mathcal{O}_{\eta},\Gamma^{\eta})$ as a linear combination of characters which vary coherently in the parameter $\eta$ (as long as $\eta$ satisfies (\ref{eq:centralizer_condition}) and (\ref{eq:dominance_condition})). In particular, 
\[\eta\mapsto \theta(\mathcal{O}_{\eta},\Gamma^{\eta})\] with $\eta$ satisfying (\ref{eq:centralizer_condition}) and (\ref{eq:dominance_condition}) is part of a coherent family of invariant eigendistributions. Taking $\eta=\lambda$ and $\eta=\rho(\mathfrak{n})$, Lemma \ref{lem:coherent2} follows. 

\section{Further Remarks and Conjectures}
\label{sec:remarks}

We begin this section by discussing the philosophy of the formula (\ref{eq:character_formula}) in greater detail. Then we give conjectures concerning generalizations of this formula. 

\subsection{Philosophy of the Formula}
\label{subsec:philosophy}

Let $(\mathcal{O},\Gamma)$ be a semisimple orbital parameter with $\mathcal{O}$ in the good range ((\ref{eq:good_range}) and Section \ref{subsec:parabolic_induction}). Fix $\lambda\in \mathcal{O}$ such that $L_{\mathbb{R}}:=G_{\mathbb{R}}(\lambda)$ is $\sigma_c$-stable.
Fix a fundamental Cartan subalgebra $\mathfrak{h}_{\mathbb{R}}\subset \mathfrak{l}_{\mathbb{R}}:=\mathfrak{g}_{\mathbb{R}}(\lambda)$ with complexification $\mathfrak{h}$ and fix a compact form $U_L:=L^{\sigma_c}\subset L$ for which $\mathfrak{h}$ is $\sigma_c$-stable. If $\Delta^+\subset \Delta(\mathfrak{l},\mathfrak{h})$ is a choice of positive roots, define
\[\rho_{\mathfrak{l},\mathfrak{h},\Delta^+}:=\frac{1}{2}\sum_{\alpha\in \Delta^+}\alpha\in \mathfrak{h}^*\subset \mathfrak{l}^*\]
and define
\[\mathcal{O}^{U_L}_{\rho}:=\op{Ad}^*(U_L)\cdot \rho_{\mathfrak{l},\mathfrak{h},\Delta^+}\subset \mathfrak{l}^*.\]

By Kirillov's character formula for the compact group $U_L:=L^{\sigma_c}$ \cite{HC57a}, \cite{Kir68}, we have 
\begin{equation}\label{eq:fiber_formula}
\mathcal{F}[\lambda+\mathcal{O}_{\rho}^{U_L}]=\theta_{\lambda}^{U_L}:=j_{U_L}^{1/2}\cdot \lambda.
\end{equation}
In reality, $\lambda$ may not lift to a character of $U_L$; hence, one may have to apply a coherent continuation argument (as in Section \ref{subsec:coherent_continuation}) in order to obtain (\ref{eq:fiber_formula}). Now, one may write down explicit formulas for $j_{U_L}^{1/2}$ and $j_{L_{\mathbb{R}}}^{1/2}$ (see for instance Theorem 1.14 on page 96 of \cite{Hel00}). From these formulas, one observes that both of these functions extend to identical holomorphic functions on $\mathfrak{l}$. In addition, observe that the definition of the Fourier transform is also independent of the real form of $L$ (see (\ref{eq:Fourier_transform_2})). Hence, we deduce
\begin{equation}\label{eq:fiber_formula2}
\mathcal{F}[\lambda+\mathcal{O}_{\rho}^{U_L}]=\theta(\{\lambda\},\Gamma_{\lambda}):=j_{L_{\mathbb{R}}}^{1/2}\cdot \lambda.
\end{equation}
Therefore, the Fourier transform of the contour $\lambda+\mathcal{O}_{\rho}^{U_L}$ is the Lie algebra analogue of the character of $\Gamma_{\lambda}$.

Philosophically, one might wish to define a contour $\varpi\colon \mathcal{C}(\mathcal{O},\Gamma)\rightarrow \mathcal{O}$ where the fiber over $\lambda\in \mathcal{O}$ is $\lambda+\mathcal{O}^{U_L}_{\rho}$. Defining such a fiber for every $\lambda\in \mathcal{O}$ is the same as choosing a maximal compact subgroup of $G_{\mathbb{R}}(\lambda)$ for every $\lambda\in \mathcal{O}$. One checks that there are many possible ways to make a continuous choice varying over $\lambda\in \mathcal{O}$. However, every such choice produces a contour $\mathcal{C}(\mathcal{O},\Gamma)$ whose real part is unbounded in $\mathfrak{g}_{\mathbb{R}}^*$; in particular $\mathcal{C}(\mathcal{O},\Gamma)$ is not an admissible contour in the sense of Rossmann (see Section \ref{subsec:Rossmann_characters}).  

In order to deform $\mathcal{C}(\mathcal{O},\Gamma)$ into an admissible contour, we require a choice of maximally real admissible polarization $\left\{\mathfrak{q}_{\lambda}\right\}_{\lambda\in \mathcal{O}}$ of $\mathcal{O}$. In addition, we choose an involution $\sigma_c$ of $G$ for which $U:=G^{\sigma_c}\subset G$ is a maximal compact subgroup with $\sigma\sigma_c=\sigma_c\sigma$. For every $\lambda\in \mathcal{O}$, there exists a unique $\sigma_c$-stable, Levi subalgebra $\mathfrak{l}_{\lambda,\sigma_c}\subset \mathfrak{q}_{\lambda}$. To check this, one first fixes $\lambda_0\in \mathcal{O}$ and a fundamental Cartan subalgebra $\mathfrak{h}_{\mathbb{R}}\subset \mathfrak{g}_{\mathbb{R}}(\lambda_0)$. After conjugating by an element of $G_{\mathbb{R}}$, one may assume that $\mathfrak{h}_{\mathbb{R}}$ is $\sigma_c$-stable. Then  $\mathfrak{l}_{\lambda_0,\sigma_c}:=\mathfrak{g}(\lambda_0)\subset \mathfrak{q}_{\lambda_0}$ is a $\sigma_c$-stable Levi factor, and any other $\sigma_c$-stable Levi factor would have to be conjugate by $Q_{\lambda}\cap U=L_{\lambda_0,\sigma_c}\cap U$ where $Q_{\lambda}$ is the normalizer of $\mathfrak{q}_{\lambda}$ and $L_{\lambda_0,\sigma_c}$ is the normalizer of $\mathfrak{l}_{\lambda_0,\sigma_c}$. But, $L_{\lambda_0,\sigma_c}\cap U$ normalizes $\mathfrak{l}_{\lambda_0,\sigma_c}$; hence, $\mathfrak{l}_{\lambda_0,\sigma_c}$ is the unique $\sigma_c$-stable Levi factor. Now, one uses the transitive action of $U$ on the partial flag variety $ Y:=\op{Ad}(G)\cdot \mathfrak{q}_{\lambda}$ to prove that $\mathfrak{l}_{\lambda,\sigma_c}:=\operatorname{Ad}(u)\cdot \mathfrak{l}_{\lambda_0,\sigma_c}$ is the unique $\sigma_c$-stable Levi factor of $\mathfrak{g}_{\lambda}$ if $\operatorname{Ad}(u)\cdot \mathfrak{q}_{\lambda_0}=\mathfrak{q}_{\lambda}$.

Now, if $\mathfrak{h}\subset \mathfrak{l}_{\lambda,\sigma_c}$ is a $\sigma_c$-stable Cartan subalgebra and $\Delta^+\subset \Delta(\mathfrak{g},\mathfrak{h})$ is a choice of positive roots, then we define 
\[\rho_{\mathfrak{l}_{\lambda,\sigma_c},\mathfrak{h},\Delta^+}=\frac{1}{2}\sum_{\alpha\in \Delta^+}\alpha.\]

We put $U_{\lambda}:=L_{\lambda,\sigma_c}\cap U$, and we put
\[\mathcal{O}_{\rho}^{U_{\lambda}}:=\bigcup_{\substack{\sigma_c(\mathfrak{h})=\mathfrak{h}\\ \Delta^+\subset \Delta(\mathfrak{l}_{\lambda,\sigma_c},\mathfrak{h})}}\rho_{\mathfrak{l}_{\lambda,\sigma_c},\mathfrak{h},\Delta^+}\subset \mathfrak{l}_{\lambda,\sigma_c}^*\]
where the union is over Cartan subalgebras $\mathfrak{h}\subset \mathfrak{l}_{\lambda,\sigma_c}$ that are $\sigma_c$-stable and choices of positive roots $\Delta^+\subset \Delta(\mathfrak{g},\mathfrak{h})$. Alternately, $\mathcal{O}_{\rho}^{U_{\lambda}}$ is the $U_{\lambda}$-orbit through any single $\rho_{\mathfrak{l}_{\lambda,\sigma_c},\mathfrak{h},\Delta^+}$. Observe that $\mathfrak{l}_{\lambda,\sigma_c}=\op{Ad}(g)\cdot \mathfrak{g}(\lambda)$ for some $g\in G$ since all Levi factors of $\mathfrak{q}_{\lambda}$ are conjugate. Moreover, given a maximal compact subgroup $U_L\subset G(\lambda)$, one may choose $g\in G$ such that $g(U_{\lambda})g^{-1}=U_L$. Then one obtains

\[\lambda+\op{Ad}^*(g)\cdot \mathcal{O}_{\rho}^{U_{\lambda}}=\lambda+\mathcal{O}_{\rho}^{U_L}.\]

Since $G$ is a connected group, one can find a path connecting the identity $e$ to $g$ which then induces a homotopy between 
the fiber $\lambda+\mathcal{O}_{\rho}^{U_{\lambda}}$ and the fiber we desired (because of (\ref{eq:fiber_formula2})) $\lambda+\mathcal{O}_{\rho}^{U_L}$. As in the introduction, we put

\[\mathcal{C}(\mathcal{O},\Gamma,\mathfrak{q},\sigma_c)=\bigcup_{\lambda\in \mathcal{O}} \lambda+\mathcal{O}_{\rho}^{U_{\lambda}}\]

\noindent and we observe that the fibers of this contour are individually homotopic to the fibers of our desired contour $\mathcal{C}(\mathcal{O},\Gamma)$. Philosophically, one should expect
\[\mathcal{F}[\mathcal{C}(\mathcal{O},\Gamma)]=\theta(\mathcal{O},\Gamma)\]
in some generalized sense. We have seen that the addition of the polarization $\mathfrak{q}=\{\mathfrak{q}_{\lambda}\}_{\lambda\in \mathcal{O}}$ produces homotopies on the fibers of $\mathcal{C}(\mathcal{O},\Gamma)$ so that the contour is deformed to a contour $\mathcal{C}(\mathcal{O},\Gamma,\mathfrak{q},\sigma_c)$. Since this contour is admissible in the sense of Rossmann (see Section \ref{subsec:Rossmann_characters}), we can precisely define what we mean by
\[\mathcal{F}[\mathcal{C}(\mathcal{O},\Gamma,\mathfrak{q},\sigma_c)]=\theta(\mathcal{O},\Gamma).\]
This is the main result of this article.

\subsection{Conjectures}
\label{subsec:conjectures}

In this section, we enumerate several conjectures suggesting future work beyond Theorem \ref{thm:main}.

\begin{conjecture}\label{conj:admissible} The character formula (\ref{eq:character_formula}) holds for any admissible polarization $\mathfrak{q}=\{\mathfrak{q}_{\lambda}\}_{\lambda\in \mathcal{O}}$. In particular, the polarization need not be maximally real.
\end{conjecture}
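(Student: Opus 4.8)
\textbf{Proof proposal for Conjecture \ref{conj:admissible}.}

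The plan is to reduce the general admissible case to the maximally real case already established in Theorem \ref{thm:main}, by exploiting that both sides of \eqref{eq:character_formula} depend on the polarization only through a prescribed homotopy class of admissible contours. First I would note that the representation $\pi(\mathcal{O},\Gamma)$ itself is known to be independent of the choice of admissible polarization (this is the classical fact that different admissible polarizations of $\mathcal{O}$ give the same cohomologically induced module up to isomorphism; see the discussion following \eqref{eq:contourII} and Section \ref{subsec:parabolic_induction}). Hence $\theta(\mathcal{O},\Gamma)$ is intrinsic, and the whole content of the conjecture is that $\mathcal{F}[\mathcal{C}(\mathcal{O},\Gamma,\mathfrak{q},\sigma_c)]$ does not change when $\mathfrak{q}$ is replaced by another admissible — not necessarily maximally real — polarization $\mathfrak{q}'$. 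By Rossmann's theory (Section \ref{subsec:Rossmann_characters}), it suffices to show that $\mathcal{C}(\mathcal{O},\Gamma,\mathfrak{q},\sigma_c)$ and $\mathcal{C}(\mathcal{O},\Gamma,\mathfrak{q}',\sigma_c)$ represent the same class in ${}_{\prime}\mathcal{H}_n(\Omega)$, i.e.\ differ by the boundary of an $\mathbb{R}$-bounded admissible $(n+1)$-chain.

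The technical heart would be an extension of the argument in Section \ref{subsec:homotopies}. For a general admissible (not maximally real) $\mathfrak{q}$, the partial flag variety $Y$ of parabolics $G$-conjugate to $\mathfrak{q}_\lambda$ still carries the real-algebraic function $f_Y = h_r(v,v)/h(v,v)$ built from the finite-dimensional $G$-representation of highest weight $\lambda + \rho(\mathfrak{n})$ (after scaling $\lambda$ to make this integral, as at the end of Section \ref{subsec:homotopies}). The same computation as in Lemma \ref{lem:dlogf} shows $\mu_\xi(d\log(f|_S)) = \mathcal{C}(\mathcal{O},\Gamma,\mathfrak{q},\sigma_c)$ for whatever admissible $\mathfrak{q}$ we started from, and the Schmid–Vilonen open embedding theorem \cite[Theorem 4.2]{SV96} still gives $\partial\widetilde{\mathcal C} = [d\log(f|_S)] - \mathrm{CC}(Rj_!\mathbb{C}_S)$. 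So the real new input is: (i) verifying that $f_Y$ is still positive on the open $G_{\mathbb{R}}$-orbit $S'$ and vanishes on $\partial S'$ — this requires redoing the "$\lambda' \neq \theta(\lambda')$ on the boundary" argument, which used only that $\mathfrak{q}$ is admissible together with $\lambda$ elliptic (indeed the positivity $f_Y(gy_0) > 0$ only uses $G_{\mathbb{R}}$-invariance of $h_r$, and the boundary vanishing only uses that a boundary parabolic cannot contain a $\theta$-stable Cartan on which $\lambda'$ is imaginary); and (ii) checking the orientation bookkeeping of Lemma \ref{lem:dlogf} still matches, which is a local computation on weight spaces insensitive to maximal reality. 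Granting (i)–(ii), one obtains $\mathcal{F}[\mathcal{C}(\mathcal{O},\Gamma,\mathfrak{q},\sigma_c)] = (-1)^q\,\mathcal{F}[\mu_\xi(\mathrm{CC}(Rj_!\mathbb{C}_S))] = \theta(\mathcal{O},\Gamma)$ in the integral elliptic case, then removes integrality by the coherent-continuation argument of Section \ref{subsec:coherent_continuation}, and finally removes ellipticity by Lemma \ref{lem:elliptic_reduction} — but here one must first extend Lemma \ref{lem:elliptic_reduction} itself to general admissible $\mathfrak{q}$, which should go through since the parabolic $\mathfrak{q}_{\mathfrak{m}}\subset\mathfrak{m}$ extracted from $\mathfrak{q}$ in \eqref{eq:imaginary_parabolic} is still a $\theta$-stable parabolic (the construction of $\mathfrak{p}$ and $M$ only used admissibility of $\mathfrak{q}$, though the proof that $\mathfrak{p}$ is $\sigma$-stable — equation \eqref{eq:sigma_stable} — crucially invoked maximal reality and would need to be replaced).

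The main obstacle I anticipate is precisely this last point: without maximal reality, the parabolic $\mathfrak{p} = \mathfrak{g}(\lambda_n)\oplus\mathfrak{n}_{\mathfrak p}$ need not be $\sigma$-stable, so the clean "parabolic induction from $M_{\mathbb{R}}$" reduction of Section \ref{subsec:parabolic_induction} breaks down, and with it Lemma \ref{lem:elliptic_reduction}. One would need either a different reduction to the elliptic case for general admissible polarizations, or a direct global argument on the flag variety $X$ of $G$ (rather than passing through $M_{\mathbb{R}}$) that handles semisimple — not merely elliptic — orbital parameters in one stroke; this would mean carrying the Kashiwara–Schmid / Schmid–Vilonen machinery of Section \ref{sec:KSV} for the sheaf $j_!\mathbb{C}_S$ attached to the (now non-$\sigma$-stable) orbit of $\mathfrak{q}_\lambda$ directly, identifying $\mathrm{Ext}^{l+s}(j_!\mathbb{C}_S,\mathcal{O}_X(\lambda+\rho_{\mathfrak{l}})) \simeq \pi^{-\omega}(\mathcal{O},\Gamma)$ via Wong's functor \cite{Won99} without assuming ellipticity. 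I expect the integral-elliptic case of the conjecture to be relatively painless, and the reduction steps (removing integrality, removing ellipticity) for general $\mathfrak{q}$ to be where the genuine work lies — which is presumably why the authors state this as a conjecture rather than a theorem.
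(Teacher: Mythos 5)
This statement is a conjecture in the paper, not a theorem; the authors leave it open, remarking only that they expect it to be provable ``by generalizing techniques of this paper'' (Section \ref{subsec:conjectures}). There is therefore no proof to compare against, and you correctly present your argument as a roadmap rather than a proof.

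Your identification of the main obstacle is the right one: without maximal reality, the criterion (\ref{eq:sigma_stable}) fails, the parabolic $\mathfrak{p}=\mathfrak{g}(\lambda_n)\oplus\mathfrak{n}_{\mathfrak{p}}$ is in general not $\sigma$-stable, and the parabolic-induction reduction of Section \ref{subsec:parabolic_induction} and Lemma \ref{lem:elliptic_reduction} is unavailable. Two points deserve sharpening, though. First, in the elliptic case the admissible polarization is \emph{unique} (this is stated at the start of Section \ref{subsec:VZ_construction}); so the ``integral elliptic case'' of the conjecture is not merely ``relatively painless,'' it is vacuous, and the entire content of the conjecture lies in the non-elliptic case. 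Second, and more substantively, for a non-maximally-real admissible $\mathfrak{q}$ on a non-elliptic $\mathcal{O}$, the $G_{\mathbb{R}}$-orbit $S'$ of $\mathfrak{q}_\lambda$ in the partial flag variety $Y$ is in general \emph{not open}: openness of $S'$ is equivalent to $\sigma(\mathfrak{q}_\lambda)\cap\mathfrak{q}_\lambda=\mathfrak{l}$, which is the \emph{minimally} real situation, whereas maximal reality makes $\sigma(\mathfrak{q}_\lambda)\cap\mathfrak{q}_\lambda$ as large as possible and an intermediate admissible $\mathfrak{q}$ may land anywhere in between. For such $\mathfrak{q}$ your proposed ``direct global argument on $X$'' would not even have an open set $S$ to which the Schmid--Vilonen open embedding theorem and the Kashiwara--Schmid/Wong identification of $\mathrm{Ext}^{l+s}(j_!\mathbb{C}_S,\mathcal{O}_X(\lambda+\rho_{\mathfrak{l}}))$ apply, so this is not a straight generalization of Sections \ref{sec:KSV}--\ref{sec:proof}; some genuinely new input is needed there.

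One ingredient the paper supplies that you gesture at but do not pin down precisely: Theorem \ref{thm:transfer} in the Appendix proves that the virtual $(\mathfrak{g},K_{\mathbb{R}})$-module $\pi(\mathcal{O},\Gamma,\mathfrak{q})$ is independent of the admissible polarization (the passage you cite after \eqref{eq:contourII} only notes that different polarizations give models that are ``not obviously equivalent''; the Appendix is where the equivalence is actually proved). This makes the right-hand side $\theta(\mathcal{O},\Gamma)$ of \eqref{eq:character_formula} unambiguous and reduces the conjecture, as you correctly note via Rossmann's theory, to showing that $\mathcal{C}(\mathcal{O},\Gamma,\mathfrak{q}_1,\sigma_c)$ and $\mathcal{C}(\mathcal{O},\Gamma,\mathfrak{q}_2,\sigma_c)$ are homologous in ${}_{\prime}\mathcal{H}_n(\Omega)$. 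Producing the requisite $\mathbb{R}$-bounded interpolating chain without the $M_{\mathbb{R}}$ reduction is exactly the unresolved difficulty, and your writeup does not supply it.
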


One can give a construction of the Harish-Chandra module of $\pi(\mathcal{O},\Gamma)$ utilizing any admissible polarization $\mathfrak{q}$. In Chapter XI of \cite{KV95}, this construction is carried out and shown to be independent of admissible polarization in the special case where $\mathcal{O}$ is of maximal dimension (or equivalently $\mathfrak{q}$ is a Borel subalgebra). An analogous statement can be proved for arbitrary semisimple $\mathcal{O}$ (see Appendix~\ref{appendix}).

This leads the authors to believe that Theorem \ref{thm:main} is also true for an arbitrary admissible polarization. The authors feel it is likely that a proof of Conjecture \ref{conj:admissible} can be given by generalizing techniques of this paper.
\bigskip

Second, we motivated our main formula (\ref{eq:character_formula}) by pointing out that it is a generalization of Rossmann's formula (\ref{eq:character_tempered}) for the characters of irreducible, tempered representations with regular infinitesimal character. Due to work of Harish-Chandra \cite{HC76}, Langlands \cite{La89}, and Knapp-Zuckerman \cite{KZ77}, \cite{KZ82a}, \cite{KZ82b}, it is known that any irreducible, tempered representation can be realized as a ``limit'' of irreducible, tempered representations with regular infinitesimal character and it is known how these limits decompose. In \cite{Ro80}, \cite{Ro82a}, \cite{Ro82b}, Rossmann studied the corresponding limits of character formulas. He found that the limit of coadjoint orbits becomes degenerate if, and only if the limit of representations is zero (this follows from the main theorem of \cite{Ro82a} combined with work of Knapp-Zuckerman \cite{KZ77}). In addition, if the limit of representations decomposes, then the limit of orbits must have an analogous decomposition (this is the main theorem of \cite{Ro82b}). These results give an elegant geometric picture of the collection of irreducible tempered representations.

One might expect analogues of these results for ``limits'' of representations of the form $\pi(\mathcal{O},\Gamma)$ where $\mathcal{O}$ is semisimple but not necessarily of maximal dimension. In particular, we might expect that the limit of contours $\mathcal{C}(\mathcal{O},\Gamma,\mathfrak{q},\sigma_c)$ becomes degenerate if, and only if the limit representation is zero. And we might expect that a decomposition of the limit representation must be accompanied by a natural decomposition of the limit contour. These questions are especially interesting since the behavior of the limit representation is more complicated in the case where $\mathcal{O}$ is not of maximal dimension.
\bigskip

Finally, it has long been conjectured that a large and important part of the collection of irreducible, unitary representations of $G_{\mathbb{R}}$ can be constructed from orbital parameters that generalize the semisimple orbital parameters introduced in this paper. Over the past 35 years, a vast literature has emerged that, in a variety of special cases, associates irreducible, unitary representations to coadjoint orbits (see for instance \cite{Vog81b}, \cite{Tor83}, \cite{BV85}, \cite{BZ91}, \cite{Kna04}, \cite{Tra04}, \cite{HKM14}, \cite{Won16}, \cite{BT} for a small sampling of this literature). However, despite these efforts, no general procedure has been found that associates a unitary representation to an arbitrary (not necessarily semisimple) orbital parameter. It has been previously suggested that one might solve this problem by constructing the character of the representation $\pi(\mathcal{O},\Gamma)$ associated to the orbital parameter $(\mathcal{O},\Gamma)$ rather than by constructing the representation itself \cite{Ve94}. This article gives a generalization of Rossmann's formula that is different than the ones in \cite{Ve94}; hence, it might breathe new life into this project.

Let us finish by roughly sketching what a solution might look like. First, one must define the notion of an orbital parameter $(\mathcal{O},\Gamma)$ in full generality. This is more subtle than it looks; for instance, one already sees in the work of Rossmann \cite{Ro82b} that $\mathcal{O}$ may be the union of several coadjoint orbits even in the case where $\mathcal{O}$ is of maximal dimension. Second, one must construct an appropriate contour $\mathcal{C}(\mathcal{O},\Gamma)\rightarrow \mathcal{O}$ contained in $\mathfrak{g}^*$. A correct solution to this problem should involve an \emph{explicit} formula for this contour and it should directly relate the fiber over $\lambda\in \mathcal{O}$ to a character formula for $\Gamma_{\lambda}$. Third, one must show 
\[\theta(\mathcal{O},\Gamma):=\mathcal{F}[\mathcal{C}(\mathcal{O},\Gamma)]\]
is the Lie algebra analogue of the character of a unitary representation $\pi(\mathcal{O},\Gamma)$ that is then associated to $(\mathcal{O},\Gamma)$.

\appendix
\section{Independence of Polarization}
\label{appendix}

Let $(\mathcal{O}, \Gamma)$ be a semisimple orbital parameter.
For a maximally real admissible polarization $\mathfrak{q}=\mathfrak{l}+\mathfrak{n}$,
 we defined 
 a unitary representation $\pi(\mathcal{O}, \Gamma)$ of $G_{\mathbb{R}}$
 combining Vogan-Zuckerman construction and Mackey parabolic induction.
On the $(\mathfrak{g},K)$-module level, both of these two constuctions are given by cohomological induction (see \cite[Chapter XI]{KV95}).
Therefore, the underlying $(\mathfrak{g},K)$-module of 
$\pi(\mathcal{O}, \Gamma)$ is isomorphic to
 $(I_{\mathfrak{q}, 
 L_{\mathbb{R}}\cap K_{\mathbb{R}}}^{\mathfrak{g},K_{\mathbb{R}}})^s
 (\Gamma_{\lambda}\otimes\mathbb{C}_{\rho(\mathfrak{n})})$,
 where $s=\dim (\mathfrak{n}\cap\mathfrak{k})$.
Also, 
 $(I_{\mathfrak{q}, 
 L_{\mathbb{R}}\cap K_{\mathbb{R}}}^{\mathfrak{g},K_{\mathbb{R}}})^j
 (\Gamma_{\lambda}\otimes\mathbb{C}_{\rho(\mathfrak{n})})=0$ for $j\neq s$.
In light of this, let us more generally define
 a virtual $(\mathfrak{g},K)$-module 
\begin{align*}
\pi(\mathcal{O}, \Gamma,\mathfrak{q})
:=\sum_{j}(-1)^{j+s}
(I_{\mathfrak{q}, 
 L_{\mathbb{R}}\cap K_{\mathbb{R}}}^{\mathfrak{g},K_{\mathbb{R}}})^j
 (\Gamma_{\lambda}\otimes\mathbb{C}_{\rho(\mathfrak{n})})
\end{align*}
for any polarization $\mathfrak{q}$.
The purpose of this appendix is to show that 
 $\pi(\mathcal{O}, \Gamma,\mathfrak{q})$ is independent of the choice of $\mathfrak{q}$
 as long as $\mathfrak{q}$ is admissible:
\begin{theorem} \label{thm:transfer} 
If $\mathfrak{q}_1$ and $\mathfrak{q}_2$ are admissible, then 
$\pi(\mathcal{O}, \Gamma, \mathfrak{q}_1)=\pi(\mathcal{O}, \Gamma, \mathfrak{q}_2)$.
\end{theorem}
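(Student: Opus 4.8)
The plan is to reduce the statement to a known invariance-of-polarization result for cohomologically induced modules with \emph{regular} infinitesimal character, and then to propagate from the regular case to the general case by a coherent continuation argument exactly parallel to the one used in Section~\ref{subsec:coherent_continuation}. First I would recall that the virtual module $\pi(\mathcal{O},\Gamma,\mathfrak{q})$ has infinitesimal character $\lambda+\rho_{\mathfrak{l}}$ (Corollary 5.25 of \cite{KV95}), which is independent of the admissible polarization $\mathfrak{q}$ since $\rho_{\mathfrak{l}}$ depends only on $\mathfrak{l}=\mathfrak{g}(\lambda)$, not on $\mathfrak{n}$. When $\mathcal{O}$ is in the good range, $\lambda+\rho_{\mathfrak{l}}$ is regular, and in that case the independence of $\pi(\mathcal{O},\Gamma,\mathfrak{q})$ from the choice of admissible $\mathfrak{q}$ is precisely the content of the Irreducibility and Unitarity Theorems together with the transfer-of-polarization machinery of \cite[Chapter XI]{KV95}: any two admissible polarizations with the same Levi factor are connected through a chain of elementary moves (adjacent parabolics, Cayley transforms on the partial flag variety), and Proposition 11.128 of \cite{KV95} — the same proposition invoked in Section~\ref{subsec:coherent_continuation} — identifies the corresponding derived functor modules. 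Thus the theorem holds whenever $\lambda+\rho_{\mathfrak{l}}$ is regular.

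Next I would set up the coherent continuation. Fix the fundamental Cartan $\mathfrak{h}\subset\mathfrak{l}$ and identify $\lambda+\rho_{\mathfrak{l}}$ with a point of $(\mathfrak{h}^*)'\simeq(\mathfrak{c}^*)'$. For each admissible polarization $\mathfrak{q}_i=\mathfrak{l}\oplus\mathfrak{n}_i$ ($i=1,2$) and each $\eta\in\mathfrak{h}^*$ satisfying the centralizer condition $\mathfrak{g}(\eta)=\mathfrak{l}$, define the virtual module $\pi(\mathcal{O}_\eta,\Gamma^\eta,\mathfrak{q}_i):=\sum_j(-1)^{j+s_i}(I^{\mathfrak{g},K_{\mathbb{R}}}_{\mathfrak{q}_i,L_{\mathbb{R}}\cap K_{\mathbb{R}}})^j(e^\eta\otimes\mathbb{C}_{\rho(\mathfrak{n}_i)})$ where $s_i=\dim(\mathfrak{n}_i\cap\mathfrak{k})$. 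Exactly as in Section~\ref{subsec:coherent_continuation} (using Zuckerman's character formula for the trivial representation of $L_{\mathbb{R}}$, induction in stages, and Proposition 11.128 of \cite{KV95}), one writes the character $\theta$ of $\pi(\mathcal{O}_\eta,\Gamma^\eta,\mathfrak{q}_i)$ as an integer combination of characters of standard modules built on Borel subalgebras $\mathfrak{b}(\Delta^+)$, and these vary coherently in $\eta$ as $\eta$ ranges over the elements with $\mathfrak{g}(\eta)=\mathfrak{l}$ and $\langle\eta+\rho(\Delta^+),\alpha^\vee\rangle>0$ for $\alpha\in\Delta(\mathfrak{n}_i,\mathfrak{h})$. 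Hence $\eta\mapsto\theta(\pi(\mathcal{O}_\eta,\Gamma^\eta,\mathfrak{q}_i))$ extends to a coherent family of invariant eigendistributions $\{\theta^{(i)}_\mu\}_{\mu\in(\mathfrak{c}^*)'}$.

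Now I would pick an auxiliary regular parameter. Choosing a sufficiently deep regular $\eta_0$ with $\mathfrak{g}(\eta_0)=\mathfrak{l}$ and $\langle\eta_0+\rho_{\mathfrak{l}},\alpha^\vee\rangle\gg0$ for all $\alpha\in\Delta(\mathfrak{n}_1,\mathfrak{h})\cup\Delta(\mathfrak{n}_2,\mathfrak{h})$ — possible since $\Delta(\mathfrak{n}_1,\mathfrak{h})\cup\Delta(\mathfrak{n}_2,\mathfrak{h})$ spans a cone not containing a line once we work in the span of the roots not in $\mathfrak{l}$ — one is in the good range for both polarizations simultaneously. By the regular case established above, $\pi(\mathcal{O}_{\eta_0},\Gamma^{\eta_0},\mathfrak{q}_1)=\pi(\mathcal{O}_{\eta_0},\Gamma^{\eta_0},\mathfrak{q}_2)$ as $(\mathfrak{g},K_{\mathbb{R}})$-modules, hence $\theta^{(1)}_{\eta_0+\rho_{\mathfrak{l}}}=\theta^{(2)}_{\eta_0+\rho_{\mathfrak{l}}}$. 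By Lemma~\ref{lem:coherent_equal} the two coherent families agree at every $\mu\in(\mathfrak{c}^*)'$, in particular at $\mu=\lambda+\rho_{\mathfrak{l}}$, so $\theta(\pi(\mathcal{O},\Gamma,\mathfrak{q}_1))=\theta(\pi(\mathcal{O},\Gamma,\mathfrak{q}_2))$. Since a virtual Harish-Chandra module is determined by its character, the two virtual modules coincide, and we are done. I expect the main obstacle to be bookkeeping: verifying that the $\rho$-shifts, the signs $(-1)^{j+s_i}$, and the ranges of validity of the dominance condition are uniform across \emph{both} admissible polarizations simultaneously — that is, that a single $\eta_0$ can be chosen good for $\mathfrak{q}_1$ and $\mathfrak{q}_2$ at once. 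This is where admissibility of $\mathfrak{q}_i$ (which forces $\Delta(\mathfrak{n}_i,\mathfrak{h})$ to contain all roots with $\langle\lambda,\alpha^\vee\rangle\in\mathbb{R}_{>0}$ but otherwise lets us pick up a common dominant direction) is used in an essential way; one must check carefully that the cones $\mathbb{R}_{\geq0}\Delta(\mathfrak{n}_1,\mathfrak{h})$ and $\mathbb{R}_{\geq0}\Delta(\mathfrak{n}_2,\mathfrak{h})$ have a common interior dominance chamber in $(\mathfrak{h}^*)^{\mathfrak{l}\text{-semisimple}}$, which follows because both are obtained by choosing signs on the same set of $\sigma$-conjugate root pairs subject to the same admissibility constraint.
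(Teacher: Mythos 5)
Your approach diverges substantially from the paper's, and unfortunately it has a decisive gap in the step where you try to place a single auxiliary parameter $\eta_0$ in the good range for both polarizations. You assert that $\Delta(\mathfrak{n}_1,\mathfrak{h})\cup\Delta(\mathfrak{n}_2,\mathfrak{h})$ spans a cone not containing a line, so that one can find $\eta_0$ with $\langle\eta_0+\rho_{\mathfrak{l}},\alpha^{\vee}\rangle\gg 0$ for every $\alpha$ in that union. But if $\mathfrak{q}_1\neq\mathfrak{q}_2$ with the same Levi factor $\mathfrak{l}$, then there is a root $\alpha$ with $\alpha\in\Delta(\mathfrak{n}_1,\mathfrak{h})\setminus\Delta(\mathfrak{n}_2,\mathfrak{h})$, and since the $\mathfrak{n}_i$ are complementary to $\mathfrak{l}$ this forces $-\alpha\in\Delta(\mathfrak{n}_2,\mathfrak{h})$. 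Thus both $\alpha$ and $-\alpha$ lie in the union, and the cone contains the line through $\alpha$; the conditions $\op{Re}\langle\eta_0+\rho_{\mathfrak{l}},\alpha^{\vee}\rangle>0$ and $\op{Re}\langle\eta_0+\rho_{\mathfrak{l}},-\alpha^{\vee}\rangle>0$ are mutually exclusive. Admissibility does not rescue this: it constrains only the roots with $\langle\lambda,\alpha^{\vee}\rangle\in\mathbb{R}_{>0}$ (on which the two polarizations must agree), so the roots on which they can differ are exactly those with $\langle\lambda,\alpha^{\vee}\rangle\notin\mathbb{R}$, and on such pairs the two good-range conditions still collide. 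There is simply no common $\eta_0$ in the good range of both polarizations unless $\mathfrak{q}_1=\mathfrak{q}_2$, so the base case for your coherent continuation never gets established.

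The first step is also thinner than it looks. The transfer theorem of \cite{KV95} (Theorem 11.87) and Proposition 11.128 are statements about cohomological induction from \emph{Borel} subalgebras containing a common $\theta$-stable Cartan; they do not directly assert that cohomological induction from two different parabolic subalgebras with the same (possibly large) Levi $\mathfrak{l}$ produce isomorphic modules, even in the good range. Bridging that gap — passing from the Borel-subalgebra transfer theorem to a transfer statement for parabolics — is precisely what the paper's Lemma~\ref{lemma:transfer2} accomplishes, by writing $\mathbb{C}^{L_{\mathbb{R}}}$ as an alternating sum of standard $L_{\mathbb{R}}$-modules (Zuckerman's formula), tensoring, inducing in stages to reduce to Borel subalgebras $\mathfrak{b}_L\oplus\mathfrak{n}_i$ of $\mathfrak{g}$, and then invoking Lemma~\ref{lemma:transfer1}. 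Your appeal to ``the Irreducibility and Unitarity Theorems together with the transfer-of-polarization machinery'' is essentially assuming the desired conclusion in the regular case. The paper's proof instead works directly with virtual modules throughout (so no good-range hypothesis is needed), uses Lemma~\ref{lemma:transfer2} to move any admissible polarization to one satisfying condition (*), and finishes by a spectral sequence reducing to a real parabolic induction from $\mathfrak{q}_{\op{Im}}$, where independence of polarization is the classical induced-character statement. In particular it avoids coherent continuation entirely, and the point where your argument breaks down — the lack of a common good-range parameter — is exactly the obstruction that forces a chain of local moves (as in Lemma~\ref{lemma:transfer1}'s induction, and the passage to condition (*)) rather than a single global comparison at one point.
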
 

We note that if $\mathfrak{q}$ is maximally real, admissible, then 
$\pi(\mathcal{O}, \Gamma,\mathfrak{q})=\pi(\mathcal{O}, \Gamma)$
 and we already saw in Section~\ref{sec:representations}
 that this does not depend on the choice of $\mathfrak{q}$.

A key result to prove Theorem~\ref{thm:transfer} is the transfer theorem
 (\cite[Theorem 11.87]{KV95}).
The following lemma is an easy consequence of the transfer theorem.
Let $\mathfrak{h}$ be a Cartan subalgebra stable by the Cartan involution
 $\theta$ of $\mathfrak{g}$
 and let $\mathfrak{h}=\mathfrak{h}^{\theta}\oplus\mathfrak{h}^{-\theta}$
 be the decomposition into $\theta$-eigenspaces.
We say a root $\alpha\in\Delta(\mathfrak{g},\mathfrak{h})$ is
 real, imaginary, and complex, respectively if 
 $\alpha|_{\mathfrak{h}^{\theta}}=0$, $\alpha|_{\mathfrak{h}^{-\theta}}=0$ and
 otherwise.

\begin{lemma} \label{lemma:transfer1} 
Suppose that
 $\mathfrak{b}_1=\mathfrak{h}+\mathfrak{n}_1$ and $\mathfrak{b}_2=\mathfrak{h}+\mathfrak{n}_2$
 are Borel subalgebras of $\mathfrak{g}$
 containing a Cartan subalgebra $\mathfrak{h}$.
Let $V$ be a one-dimensional
 $(\mathfrak{h}, H_{\mathbb{R}}\cap K_{\mathbb{R}})$-module on which $\mathfrak{h}$ acts by scalars according to $\lambda\in\mathfrak{h}^*$.
We assume that all roots $\alpha$ in 
 $\Delta(\mathfrak{n}_1,\mathfrak{h})\setminus
  \Delta(\mathfrak{n}_2,\mathfrak{h})$ are complex roots and 
 satisfy $\langle \lambda, \alpha\rangle \not\in {\mathbb R}$.
Then we have the following equation of virtual $(\mathfrak{g},K)$-modules
\begin{align*}
\sum_j
(-1)^j (I_{\frak{b}_1, H_{\mathbb{R}}\cap K_{\mathbb{R}}}^{\frak{g},K_{\mathbb{R}}})^j(V)
= \sum_j (-1)^{j+t}
 (I_{\frak{b}_2, H_{\mathbb{R}}\cap K_{\mathbb{R}}}^{\frak{g},K_{\mathbb{R}}})^j
 (V\otimes \bigwedge^{\rm top}(\frak{n}_2/(\frak{n}_1\cap\frak{n}_2))),
\end{align*}
where $t=\dim(\frak{n}_1\cap\frak{k})-\dim(\frak{n}_2\cap\frak{k})$.
\end{lemma}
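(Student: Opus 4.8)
The plan is to deduce Lemma~\ref{lemma:transfer1} from the transfer theorem \cite[Theorem~11.87]{KV95} by breaking the passage from $\mathfrak{b}_1$ to $\mathfrak{b}_2$ into a chain of single simple reflections. Since $\mathfrak{b}_1$ and $\mathfrak{b}_2$ both contain $\mathfrak{h}$, the sets $\Delta(\mathfrak{n}_1,\mathfrak{h})$ and $\Delta(\mathfrak{n}_2,\mathfrak{h})$ are two positive systems of $\Delta(\mathfrak{g},\mathfrak{h})$; put $S:=\Delta(\mathfrak{n}_1,\mathfrak{h})\setminus\Delta(\mathfrak{n}_2,\mathfrak{h})$, which by hypothesis consists of complex roots $\alpha$ with $\langle\lambda,\alpha^\vee\rangle\notin\mathbb{R}$ (equivalent to $\langle\lambda,\alpha\rangle\notin\mathbb{R}$, and in particular $\langle\lambda,\alpha^\vee\rangle\notin\mathbb{Z}$). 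If $S=\emptyset$ then $\mathfrak{b}_1=\mathfrak{b}_2$, $t=0$, and the statement is trivial, so assume $S\neq\emptyset$.

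First I would set up the interpolation. By the standard combinatorics of positive systems there is a chain of Borel subalgebras $\mathfrak{b}_1=\mathfrak{c}_0\supset\mathfrak{h}$, $\mathfrak{c}_1,\dots,\mathfrak{c}_N=\mathfrak{b}_2$, with $\mathfrak{c}_{i+1}=s_{\beta_i}\mathfrak{c}_i$ for a root $\beta_i$ that is simple for $\mathfrak{c}_i$ and lies in the residual symmetric difference $\Delta(\mathfrak{n}(\mathfrak{c}_i),\mathfrak{h})\setminus\Delta(\mathfrak{n}_2,\mathfrak{h})$. One checks by induction that this residual set is always $\{\beta_i,\dots,\beta_{N-1}\}\subseteq S$; in particular every $\beta_i$ is complex and $\{\beta_0,\dots,\beta_{N-1}\}=S$. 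Since $\beta_i$ is simple for $\mathfrak{c}_i$ one has $\mathfrak{n}(\mathfrak{c}_{i+1})/(\mathfrak{n}(\mathfrak{c}_i)\cap\mathfrak{n}(\mathfrak{c}_{i+1}))\cong\mathfrak{g}_{-\beta_i}$ as $(\mathfrak{h},H_{\mathbb{R}}\cap K_{\mathbb{R}})$-modules; the weights $-\beta_i$ sum to $-\sum_{\alpha\in S}\alpha$, which is the weight of $\bigwedge^{\mathrm{top}}(\mathfrak{n}_2/(\mathfrak{n}_1\cap\mathfrak{n}_2))$, and $\sum_i\bigl(\dim(\mathfrak{n}(\mathfrak{c}_i)\cap\mathfrak{k})-\dim(\mathfrak{n}(\mathfrak{c}_{i+1})\cap\mathfrak{k})\bigr)$ telescopes to $t$. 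Hence, granting the one-step version of the lemma, composing the $N$ steps and collecting the successive twists and signs produces exactly the asserted identity. The only point to check as we go is that the hypothesis persists at the $i$-th step: the relevant weight is $\lambda-\sum_{k<i}\beta_k$, and $\langle\lambda-\sum_{k<i}\beta_k,\beta_i^\vee\rangle\in\langle\lambda,\beta_i^\vee\rangle+\mathbb{Z}\subseteq\mathbb{C}\setminus\mathbb{R}$ since $\langle\lambda,\beta_i^\vee\rangle$ has nonzero imaginary part, and $\beta_i\in S$ is complex.

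It remains to treat the one-step case: $S=\{\beta\}$ with $\beta$ simple for $\mathfrak{b}_1$ and complex, $\mathfrak{b}_2=s_\beta\mathfrak{b}_1$, $\mathfrak{n}_2/(\mathfrak{n}_1\cap\mathfrak{n}_2)\cong\mathfrak{g}_{-\beta}$, and (a short case analysis shows) $t=+1$ or $-1$ according as $\theta\beta\in\Delta(\mathfrak{n}_1,\mathfrak{h})$ or not. I would realize this as an instance of the transfer theorem \cite[Theorem~11.87]{KV95}. Concretely, $\mathfrak{q}_\beta:=\mathfrak{b}_1+\mathfrak{g}_{-\beta}$ is a (non-$\theta$-stable) parabolic subalgebra with Levi $\mathfrak{l}_\beta=\mathfrak{h}+\mathfrak{g}_\beta+\mathfrak{g}_{-\beta}$ and nilradical $\mathfrak{n}_1\cap\mathfrak{n}_2$, and $\mathfrak{b}_1,\mathfrak{b}_2$ are built over the two opposite Borels $\mathfrak{h}+\mathfrak{g}_\beta$ and $\mathfrak{h}+\mathfrak{g}_{-\beta}$ of $\mathfrak{l}_\beta$; since $\beta$ is complex, $(\mathfrak{g}_\beta\oplus\mathfrak{g}_{-\beta})\cap\mathfrak{k}=0$, so $L_\beta\cap K$ is essentially a torus and, via induction in stages, the comparison reduces to the rank-one ($\mathfrak{sl}_2$) case of the transfer theorem. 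There the non-integrality $\langle\lambda,\beta^\vee\rangle\notin\mathbb{Z}$ is exactly the hypothesis that makes the comparison clean (the relevant derived functors over $\mathfrak{l}_\beta$ being concentrated in a single degree), and switching from one Borel of $\mathfrak{l}_\beta$ to the opposite one introduces precisely the twist by $\bigwedge^{\mathrm{top}}\mathfrak{g}_{-\beta}$ and the sign $(-1)^t$. Throughout one works in the Grothendieck group, where induction in stages is an Euler-characteristic identity coming from the Grothendieck spectral sequence.

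The main obstacle I anticipate is the bookkeeping of normalizations in this last step: reconciling the $\rho$-shift, the cohomological degree shift, the twist by $\bigwedge^{\mathrm{top}}(\mathfrak{n}_2/(\mathfrak{n}_1\cap\mathfrak{n}_2))$, and the sign $(-1)^t$ as they appear in \cite[Theorem~11.87]{KV95} with the precise form of Lemma~\ref{lemma:transfer1}, and verifying that the hypotheses of the transfer theorem are met at each step. The remaining ingredients---existence of the interpolating chain and the identity $\{\beta_i\}=S$, telescoping of twists and of degree shifts, persistence of the non-real condition, and the Grothendieck-group form of induction in stages---are routine.
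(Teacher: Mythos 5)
Your proposal follows essentially the same route as the paper's proof: pass from $\mathfrak{b}_1$ to $\mathfrak{b}_2$ through a chain of adjacent Borel subalgebras differing by one simple (necessarily complex) root at a time, apply the transfer theorem \cite[Theorem 11.87]{KV95} at each step, and telescope the twists, cohomological degree shifts, and signs while checking that the non-real condition on the pairing persists. The paper organizes this as an induction on $\dim(\mathfrak{n}_2/(\mathfrak{n}_1\cap\mathfrak{n}_2))$ and records the one-step output of the transfer theorem explicitly (degree $j-1$ or $j+1$ according as $\theta\alpha$ does or does not lie in $\Delta(\mathfrak{n}_1,\mathfrak{h})$, together with a one-dimensional twist), which is precisely the ``bookkeeping'' you defer; otherwise the chain construction, the telescoping, and the persistence argument are as you describe.
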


\begin{proof}
We use induction on the dimension of
 $\frak{n}_2/(\frak{n}_1\cap\frak{n}_2)$.
If $\dim (\frak{n}_2/(\frak{n}_1\cap\frak{n}_2))=0$,
 then $\frak{n}_1=\frak{n}_2$ and the statement follows.

Suppose $\dim (\frak{n}_2/(\frak{n}_1\cap\frak{n}_2)) > 0$.
Then there exists a simple root $\alpha$ with respect to the
 positive roots $\Delta(\frak{n}_1,\frak{h})$ such that
 $\alpha \not\in \Delta(\frak{n}_2,\frak{h})$.
Let 
\begin{align*}
\frak{n}_3=\frak{g}_{-\alpha} \oplus \bigoplus_{\substack{\beta\in \Delta(\frak{n}_1,\frak{h})\\ \beta\neq \alpha}} \frak{g}_{\beta},
\qquad \frak{b}_3=\frak{h}\oplus \frak{n}_3.
\end{align*}
Since $\alpha$ is a complex root,
 the Borel subalgebras $\frak{b}_1$ and $\frak{b}_3$ satisfy the conditions
 in \cite[Theorem 11.87]{KV95}.
As a result, we get 
\begin{align*}
(I_{\frak{b}_1, H_{\mathbb{R}}\cap K_{\mathbb{R}}}^{\frak{g},K_{\mathbb{R}}})^j(V)
\simeq (I_{\frak{b}_3, H_{\mathbb{R}}\cap K_{\mathbb{R}}}^{\frak{g},K_{\mathbb{R}}})^{j\pm 1}(V\otimes \mathbb{C}_\alpha)
\end{align*}
for all $j$, where the degree on the right hand side is $j-1$ (resp.\ $j+1$)
 if $\theta\alpha\in\Delta(\frak{n}_1,\frak{h})$
 (resp.\ $\theta\alpha\not\in\Delta(\frak{n}_1,\frak{h})$).
Hence 
\begin{align*}
\sum_j
(-1)^j (I_{\frak{b}_1, H_{\mathbb{R}}\cap K_{\mathbb{R}}}^{\frak{g},K_{\mathbb{R}}})^j(V)
= \sum_j (-1)^{j+1}
 (I_{\frak{b}_3, H_{\mathbb{R}}\cap K_{\mathbb{R}}}^{\frak{g},K_{\mathbb{R}}})^j (V\otimes \mathbb{C}_\alpha).
\end{align*}
Since $\dim (\frak{n}_2/(\frak{n}_3\cap\frak{n}_2))=\dim (\frak{n}_2/(\frak{n}_1\cap\frak{n}_2))-1$, the induction hypothesis implies 
\begin{multline*}
\sum_j
(-1)^j (I_{\frak{b}_3, H_{\mathbb{R}}\cap K_{\mathbb{R}}}^{\frak{g},K_{\mathbb{R}}})^j(V\otimes \mathbb{C}_\alpha)\\
= \sum_j (-1)^{j+t+1}
 (I_{\frak{b}_2, H_{\mathbb{R}}\cap K_{\mathbb{R}}}^{\frak{g},K_{\mathbb{R}}})^j
 (V\otimes \mathbb{C}_\alpha \otimes \bigwedge^{\rm top}(\frak{n}_2/(\frak{n}_3\cap\frak{n}_2))).
\end{multline*}
These two equations and
 $\frak{n}_2/(\frak{n}_1\cap\frak{n}_2)\simeq
 \frak{n}_2/(\frak{n}_3\cap\frak{n}_2) \oplus \mathbb{C}_\alpha$
 give
\begin{align*}
\sum_j
(-1)^j (I_{\frak{b}_1, H_{\mathbb{R}}\cap K_{\mathbb{R}}}^{\frak{g},K_{\mathbb{R}}})^j(V)
= \sum_j (-1)^{j+t}
 (I_{\frak{b}_2, H_{\mathbb{R}}\cap K_{\mathbb{R}}}^{\frak{g},K_{\mathbb{R}}})^j
 (V \otimes \bigwedge^{\rm top}(\frak{n}_2/(\frak{n}_1\cap\frak{n}_2))),
\end{align*}
 proving the lemma.
\end{proof}

We also have a similar lemma for parabolic subalgebras.
\begin{lemma} \label{lemma:transfer2} 
Suppose that
 $\frak{q}_1=\frak{l}+\frak{n}_1$ and $\frak{q}_2=\frak{l}+\frak{n}_2$
 are parabolic subalgebras of $\frak{g}$
 which have the same Levi factor $\frak{l}$.
We assume that $\frak{l}$ is $\theta$-stable.
Let $V$ be a one-dimensional
 $(\frak{l}, L_{\mathbb{R}}\cap K_{\mathbb{R}})$-module on which $\frak{h}$ acts by scalars according to $\lambda\in\frak{h}^*$.
We assume that all roots $\alpha$ in 
 $\Delta(\frak{n}_1,\frak{h})\setminus
  \Delta(\frak{n}_2,\frak{h})$ are complex roots and 
 satisfy $\langle \lambda, \alpha\rangle \not\in {\mathbb R}$.
Then we have the following equation of virtual $(\frak{g},K)$-modules
\begin{align*}
\sum_j
(-1)^j (I_{\frak{q}_1, L_{\mathbb{R}}\cap K_{\mathbb{R}}}^{\frak{g},K_{\mathbb{R}}})^j(V)
= \sum_j (-1)^{j+t}
 (I_{\frak{q}_2, L_{\mathbb{R}}\cap K_{\mathbb{R}}}^{\frak{g},K_{\mathbb{R}}})^j
 (V\otimes \bigwedge^{\rm top}(\frak{n}_2/(\frak{n}_1\cap\frak{n}_2))),
\end{align*}
where $t=\dim(\frak{n}_1\cap\frak{k})-\dim(\frak{n}_2\cap\frak{k})$.
\end{lemma}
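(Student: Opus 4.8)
The plan is to reduce Lemma \ref{lemma:transfer2} to the Borel case, Lemma \ref{lemma:transfer1}, by means of induction in stages. First I would fix a $\theta$-stable Cartan subalgebra $\mathfrak{h}\subset\mathfrak{l}$ together with a Borel subalgebra $\mathfrak{b}_{\mathfrak{l}}=\mathfrak{h}\oplus\mathfrak{n}_{\mathfrak{l}}$ of $\mathfrak{l}$, and set $\mathfrak{b}_i:=\mathfrak{b}_{\mathfrak{l}}\oplus\mathfrak{n}_i$, a Borel subalgebra of $\mathfrak{g}$ with nilradical $\mathfrak{n}_{\mathfrak{b}_i}=\mathfrak{n}_{\mathfrak{l}}\oplus\mathfrak{n}_i$. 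Since $V$ is one-dimensional, $\lambda$ vanishes on $[\mathfrak{l},\mathfrak{l}]\cap\mathfrak{h}$, so $\lambda+\rho_{\mathfrak{l}}$ is regular for $W(\mathfrak{l},\mathfrak{h})$; hence, choosing $\mathfrak{b}_{\mathfrak{l}}$ compatibly, Theorems 5.35 and 5.109 of \cite{KV95} (Borel--Weil--Bott vanishing in the $(\mathfrak{g},K)$-module setting) produce a suitable one-dimensional $(\mathfrak{b}_{\mathfrak{l}},H_{\mathbb{R}}\cap K_{\mathbb{R}})$-module $W$, whose $\mathfrak{h}$-weight $\mu_W$ differs from $\lambda$ only by a half-integral combination of roots of $\mathfrak{l}$, such that $(I_{\mathfrak{b}_{\mathfrak{l}},H_{\mathbb{R}}\cap K_{\mathbb{R}}}^{\mathfrak{l},L_{\mathbb{R}}\cap K_{\mathbb{R}}})^j(W)\simeq V$ for $j=c_0:=\dim(\mathfrak{n}_{\mathfrak{l}}\cap\mathfrak{k})$ and vanishes for $j\neq c_0$.

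Given this single-degree vanishing, induction in stages (Corollary 11.86 of \cite{KV95}) applied to $\mathfrak{b}_i\subset\mathfrak{q}_i\subset\mathfrak{g}$ yields $(I_{\mathfrak{q}_i, L_{\mathbb{R}}\cap K_{\mathbb{R}}}^{\mathfrak{g},K_{\mathbb{R}}})^{j}(V)\simeq(I_{\mathfrak{b}_i, H_{\mathbb{R}}\cap K_{\mathbb{R}}}^{\mathfrak{g},K_{\mathbb{R}}})^{j+c_0}(W)$ for all $j$, with the shift $c_0$ the same for $i=1,2$. Next I would check the hypotheses of Lemma \ref{lemma:transfer1} for the pair $\mathfrak{b}_1,\mathfrak{b}_2$ and the module $W$: since $\mathfrak{n}_{\mathfrak{l}}$ is common to both nilradicals, $\Delta(\mathfrak{n}_{\mathfrak{b}_1},\mathfrak{h})\setminus\Delta(\mathfrak{n}_{\mathfrak{b}_2},\mathfrak{h})=\Delta(\mathfrak{n}_1,\mathfrak{h})\setminus\Delta(\mathfrak{n}_2,\mathfrak{h})$, which consists of complex roots by assumption; and for $\alpha$ in this set, $\langle\mu_W,\alpha^\vee\rangle-\langle\lambda,\alpha^\vee\rangle\in\tfrac12\mathbb{Z}\subset\mathbb{R}$ (pairings of roots with coroots being integers), so $\langle\mu_W,\alpha^\vee\rangle\notin\mathbb{R}$ because $\langle\lambda,\alpha^\vee\rangle\notin\mathbb{R}$. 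Lemma \ref{lemma:transfer1} then gives the virtual-module identity relating $(I_{\mathfrak{b}_1}^{\mathfrak{g}})^\bullet(W)$ to $(I_{\mathfrak{b}_2}^{\mathfrak{g}})^\bullet\bigl(W\otimes\bigwedge^{\mathrm{top}}(\mathfrak{n}_{\mathfrak{b}_2}/(\mathfrak{n}_{\mathfrak{b}_1}\cap\mathfrak{n}_{\mathfrak{b}_2}))\bigr)$ with sign $(-1)^{t'}$, where $t'=\dim(\mathfrak{n}_{\mathfrak{b}_1}\cap\mathfrak{k})-\dim(\mathfrak{n}_{\mathfrak{b}_2}\cap\mathfrak{k})$.

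To finish, I would translate this back. From $\mathfrak{n}_{\mathfrak{b}_i}\cap\mathfrak{k}=(\mathfrak{n}_{\mathfrak{l}}\cap\mathfrak{k})\oplus(\mathfrak{n}_i\cap\mathfrak{k})$ we get $t'=t$, and $\mathfrak{n}_{\mathfrak{b}_2}/(\mathfrak{n}_{\mathfrak{b}_1}\cap\mathfrak{n}_{\mathfrak{b}_2})\simeq\mathfrak{n}_2/(\mathfrak{n}_1\cap\mathfrak{n}_2)$ as $(L_{\mathbb{R}}\cap K_{\mathbb{R}})$-modules, both $\mathfrak{n}_1,\mathfrak{n}_2$ and their intersection being $\mathfrak{l}$-stable. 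Writing $\chi:=\bigwedge^{\mathrm{top}}(\mathfrak{n}_2/(\mathfrak{n}_1\cap\mathfrak{n}_2))$, a character of $\mathfrak{l}$, tensoring commutes with the inner cohomological induction, $(I_{\mathfrak{b}_{\mathfrak{l}}}^{\mathfrak{l}})^j(W\otimes\chi)\simeq(I_{\mathfrak{b}_{\mathfrak{l}}}^{\mathfrak{l}})^j(W)\otimes\chi$, which is $V\otimes\chi$ in degree $c_0$ and zero otherwise (the weight $\mu_W+\chi|_{\mathfrak{h}}$ still vanishes on $[\mathfrak{l},\mathfrak{l}]\cap\mathfrak{h}$), so induction in stages again gives $(I_{\mathfrak{b}_2}^{\mathfrak{g}})^{j+c_0}(W\otimes\chi)\simeq(I_{\mathfrak{q}_2}^{\mathfrak{g}})^{j}(V\otimes\chi)$. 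Assembling the three isomorphisms, the powers of $(-1)^{c_0}$ cancel and the sign $(-1)^{t}$ survives, producing exactly the asserted identity. The main obstacle I anticipate is purely bookkeeping: pinning down the correct one-dimensional $\mathfrak{b}_{\mathfrak{l}}$-module $W$ and the degree shift $c_0$, and confirming that induction in stages applies in both directions, which rests on the single-degree vanishing; there is no genuinely new difficulty beyond what Lemma \ref{lemma:transfer1} already handles.
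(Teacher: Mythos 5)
There is a genuine gap at the very first step. You assert that one can find a single one-dimensional $(\mathfrak{b}_{\mathfrak{l}},H_{\mathbb{R}}\cap K_{\mathbb{R}})$-module $W$ with $(I_{\mathfrak{b}_{\mathfrak{l}},H_{\mathbb{R}}\cap K_{\mathbb{R}}}^{\mathfrak{l},L_{\mathbb{R}}\cap K_{\mathbb{R}}})^{j}(W)\simeq V$ in degree $j=c_0$ and $0$ in other degrees, citing Theorems 5.35 and 5.109 of \cite{KV95}. But those theorems give vanishing outside middle degree; they do not say the middle-degree module is one-dimensional, and in fact for a noncompact reductive $L_{\mathbb{R}}$ it never is. Cohomological induction from a Borel subalgebra of $\mathfrak{l}$ produces (limits of) discrete series or, more generally, standard modules, which are infinite-dimensional unless $L_{\mathbb{R}}$ is compact or abelian. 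Already for $L_{\mathbb{R}}=SL(2,\mathbb{R})$ and $V$ the trivial character, no one-dimensional $W$ and no single degree $c_0$ achieve $(I_{\mathfrak{b}_{\mathfrak{l}}}^{\mathfrak{l}})^{c_0}(W)\simeq V$; the trivial representation of $SL(2,\mathbb{R})$ lies in the Grothendieck group only as an alternating sum of standard modules.

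This is precisely the difficulty the paper's proof confronts: instead of trying to realize $V$ by a single cohomological induction, it invokes Zuckerman's formula (\ref{eq:Zuckerman_formula_trivial}) to expand the trivial representation of $L_{\mathbb{R}}$ as an alternating sum $\sum_{H'}\sum_{(\Lambda,\Delta^+)}\epsilon(H',\Lambda)\,I(H',\Lambda)_{\mathrm{HC}}$, each summand of which \emph{is} a cohomologically induced module from a Borel of $\mathfrak{l}$ concentrated in a single degree. After tensoring with $V$, induction in stages and Lemma \ref{lemma:transfer1} are applied term by term, with the hypothesis $\langle\lambda,\alpha\rangle\notin\mathbb{R}$ propagated as in your second paragraph (the weight of each $W$ differs from $\lambda$ by a real weight). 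Your subsequent steps — checking the hypotheses of Lemma \ref{lemma:transfer1}, identifying $\mathfrak{n}_{\mathfrak{b}_2}/(\mathfrak{n}_{\mathfrak{b}_1}\cap\mathfrak{n}_{\mathfrak{b}_2})\simeq\mathfrak{n}_2/(\mathfrak{n}_1\cap\mathfrak{n}_2)$, and matching the signs via $t'=t$ — are all sound and would carry through once the single module $W$ is replaced by the full alternating sum coming from Zuckerman's character identity. So the missing ingredient is exactly Zuckerman's formula for the trivial representation; without it, the reduction to Lemma \ref{lemma:transfer1} does not get off the ground when $L_{\mathbb{R}}$ is noncompact.
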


\begin{proof}
As in \eqref{eq:Zuckerman_formula_trivial}, write the trivial representation $\mathbb{C}^{L_{\mathbb{R}}}$ of $L_{\mathbb{R}}$ as an alternating sum of standard modules:
\begin{align*}
\mathbb{C}^{L_{\mathbb{R}}}=\sum_{H'} \sum_{(\Lambda,\Delta^+)\in \mathcal{P}(H',\rho)}\epsilon(H',\Lambda) I(H',\Lambda)_{\text{HC}}
\end{align*}
The standard module 
$I(H',\Lambda)_{\text{HC}}$ is isomorphic to a certain
 cohomological induced module.
We can take a Borel subalgebra $\frak{b}_L$, a one-dimensional module $W$, 
 and a degree $s$ such that
\begin{align*}
I(H',\Lambda)_{\text{HC}}\simeq 
(I_{\frak{b}_L, H'_{\mathbb{R}}\cap K_{\mathbb{R}}}^{\frak{l},L_{\mathbb{R}}\cap K_{\mathbb{R}}})^s(W)
\end{align*}
and 
$(I_{\frak{b}_L, H'_{\mathbb{R}}\cap K_{\mathbb{R}}}^{\frak{l},L_{\mathbb{R}}\cap K_{\mathbb{R}}})^j(W)=0$ for $j\neq s$.
Moreover, it follows that the $\frak{h}$-action on $W$ is given
 by an element in the root lattice.
As a result, we get an equation 
\begin{align*}
\mathbb{C}^{L_{\mathbb{R}}}=\sum_{H'} \sum_{W,\frak{b}_L}
 \epsilon(H',W)
 \sum_j (-1)^j (I_{\frak{b}_L, H'_{\mathbb{R}}\cap K_{\mathbb{R}}}^{\frak{l},L_{\mathbb{R}}\cap K_{\mathbb{R}}})^j(W).
\end{align*}
By tensoring with $V$, we have 
\begin{align*}
V=\sum_{H'} \sum_{W,\frak{b}_L}
 \epsilon(H',W)
 \sum_j (-1)^j (I_{\frak{b}_L, H'_{\mathbb{R}}\cap K_{\mathbb{R}}}^{\frak{l},L_{\mathbb{R}}\cap K_{\mathbb{R}}})^j(W\otimes V).
\end{align*}
By applying
 $(-1)^i (I_{\frak{q}_1, L_{\mathbb{R}}\cap K_{\mathbb{R}}}^{\frak{g},K_{\mathbb{R}}})^i$ to this equation:
\begin{align*}
&\sum_i
(-1)^i (I_{\frak{q}_1, L_{\mathbb{R}}\cap K_{\mathbb{R}}}^{\frak{g},K_{\mathbb{R}}})^i(V)\\
&=\sum_{H'} \sum_{W,\frak{b}_L}
 \epsilon(H',W)
 \sum_{i,j}
(-1)^{i+j} (I_{\frak{q}_1, L_{\mathbb{R}}\cap K_{\mathbb{R}}}^{\frak{g},K_{\mathbb{R}}})^i
 (I_{\frak{b}_L, H'_{\mathbb{R}}\cap K_{\mathbb{R}}}^{\frak{l},L_{\mathbb{R}}\cap K_{\mathbb{R}}})^j(W\otimes V)\\
&=\sum_{H'} \sum_{W,\frak{b}_L}
 \epsilon(H',W)
 \sum_j (-1)^j (I_{\frak{b}_L\oplus \frak{n}_1, H'_{\mathbb{R}}\cap K_{\mathbb{R}}}^{\frak{g}, K_{\mathbb{R}}})^j(W\otimes V).
\end{align*}
Here, the last equation is a consequence of a spectral sequence
 (see \cite[p.680, Theorem 11.77]{KV95})
\begin{align*}
(I_{\frak{q}_1, L_{\mathbb{R}}\cap K_{\mathbb{R}}}^{\frak{g},K_{\mathbb{R}}})^i
 (I_{\frak{b}_L, H'_{\mathbb{R}}\cap K_{\mathbb{R}}}^{\frak{l},L_{\mathbb{R}}\cap K_{\mathbb{R}}})^j(W\otimes V)
 \Longrightarrow
 (I_{\frak{b}_L\oplus \frak{n}_1, H'_{\mathbb{R}}\cap K_{\mathbb{R}}}^{\frak{g}, K_{\mathbb{R}}})^{i+j}(W\otimes V).
\end{align*}
Similarly, 
\begin{multline*}
\sum_j (-1)^{j}
 (I_{\frak{q}_2, L_{\mathbb{R}}\cap K_{\mathbb{R}}}^{\frak{g},K_{\mathbb{R}}})^j
 (V\otimes \bigwedge^{\rm top}(\frak{n}_2/(\frak{n}_1\cap\frak{n}_2)))\\
=\sum_{H'} \sum_{W,\frak{b}_L}
 \epsilon(H',W)
 \sum_j (-1)^j (I_{\frak{b}_L\oplus \frak{n}_2, H'_{\mathbb{R}}\cap K_{\mathbb{R}}}^{\frak{g}, K_{\mathbb{R}}})^j(W\otimes V\otimes \bigwedge^{\rm top}(\frak{n}_2/(\frak{n}_1\cap\frak{n}_2)))
\end{multline*}
with the same index set of $H',W,\frak{b}_L$.
If we put $\frak{b}_1=\frak{b}_L\oplus \frak{n}_1$ and
 $\frak{b}_2=\frak{b}_L\oplus \frak{n}_2$, we can apply
 Lemma~\ref{lemma:transfer1} and conclude that 
\begin{multline*}
\sum_j (-1)^j (I_{\frak{b}_L\oplus \frak{n}_1, H'_{\mathbb{R}}\cap K_{\mathbb{R}}}^{\frak{g}, K_{\mathbb{R}}})^j(W\otimes V) \\
= \sum_j (-1)^j (I_{\frak{b}_L\oplus \frak{n}_2, H'_{\mathbb{R}}\cap K_{\mathbb{R}}}^{\frak{g}, K_{\mathbb{R}}})^j(W\otimes V\otimes \bigwedge^{\rm top}(\frak{n}_2/(\frak{n}_1\cap\frak{n}_2))).
\end{multline*}
This shows the equation in the lemma.
\end{proof}

We now turn to the proof of Theorem~\ref{thm:transfer}.
We say a polarization $\frak{q}=\frak{l}+\frak{n}$ satisfies condition (*) if 
\begin{itemize}
\item 
$\op{Re}\langle \lambda, \alpha \rangle>0$ implies 
 $\alpha\in \Delta(\frak{n},\frak{h})$
 for roots $\alpha\in \Delta(\frak{g},\frak{h})$, and
\item
If $\op{Re}\langle \lambda, \alpha \rangle=0$
 and $\alpha\in\Delta(\frak{n},\frak{h})$, then
 $\sigma\alpha\in\Delta(\frak{n},\frak{h})$.
\end{itemize}
Let $\lambda=\lambda_c+\lambda_n$ be the decomposition
 according to $\mathfrak{h}=\frak{h}^{\theta}+\frak{h}^{-\theta}$.
Let $\frak{q}_{\op{Im}}=\frak{l}_{\op{Im}}\oplus\frak{n}_{\op{Im}}$
 be the parabolic subalgebra defined by
\begin{align*}
\Delta(\frak{n}_{\op{Im}},\frak{h})
 =\{\alpha\in\Delta(\frak{g},\frak{h}):
 \langle\lambda_c,\alpha\rangle > 0\},\qquad 
\frak{l}_{\op{Im}}=\frak{g}(\lambda_c).
\end{align*}
Then if a polarization $\frak{q}$ satisfies condition (*), then
 $\frak{q}\subset \frak{q}_{\op{Im}}$
 and $\frak{q}\cap\frak{l}_{\op{Im}}$ is a $\sigma$-stable
 parabolic subalgebra of $\frak{l}_{\op{Im}}$.

\begin{proof}[proof of Theorem~\ref{thm:transfer}]
For an admissible polarization $\frak{q}_1=\frak{l}+\frak{n}_1$
 define the set $\Delta(\frak{n}'_1,\frak{h})$ to be 
 the collection of the following roots $\alpha$:
\begin{itemize}
\item $\op{Re}\langle \lambda, \alpha \rangle>0$,
\item $\op{Re}\langle \lambda, \alpha \rangle=0$
 and $\langle \rho(\frak{n}_1)|_{\frak{h}^{-\theta}}, \alpha \rangle>0$,
\item
 $\op{Re}\langle \lambda, \alpha \rangle=
 \langle \rho(\frak{n}_1)|_{\frak{h}^{-\theta}}, \alpha \rangle=0$
 and 
 $\op{Im} \langle \lambda, \alpha \rangle>0$.
\end{itemize}
Then the corresponding parabolic subalgebra
 $\frak{q}'_1=\frak{l}+\frak{n}'_1$ is an admissible polarization
 and satisfies condition (*).
If $\alpha\in \Delta(\frak{n}_1,\frak{h})$ is a real root, then 
 $\langle \rho(\frak{n}_1)|_{\frak{h}^{-\theta}}, \alpha \rangle
 =\langle \rho(\frak{n}_1), \alpha \rangle>0$.
Also, if $\alpha\in \Delta(\frak{n}_1,\frak{h})$ and 
 $\op{Im}\langle \lambda, \alpha \rangle=0$, then 
 $\op{Re}\langle \lambda, \alpha \rangle>0$
 by the admissibility of $\frak{n}_1$.
Hence the roots $\alpha$ in 
 $\Delta(\frak{n}_1,\frak{h})\setminus\Delta(\frak{n}'_1,\frak{h})$
 are all complex roots
 and satisfy $\op{Im} \langle \lambda, \alpha \rangle \neq 0$.

By applying Lemma~\ref{lemma:transfer2}, we have 
$\pi(\mathcal{O}, \Lambda, \frak{q}_1)=\pi(\mathcal{O}, \Lambda, \frak{q}'_1)$.
Therefore, we may assume that $\frak{q}_1$ and $\frak{q}_2$
 satisfy condition (*).
Then by the spectral sequence
 $(I_{\frak{q}_{\op{Im}}, L_{\op{Im},\mathbb{R}}\cap K_{\mathbb{R}}}^{\frak{g},K_{\mathbb{R}}})^i
(I_{\frak{q}_1\cap \frak{l_{\op{Im}}}, L_{\mathbb{R}}\cap K_{\mathbb{R}}}^{\frak{l_{\op{Im}}},L_{\op{Im},\mathbb{R}}\cap K_{\mathbb{R}}})^{j}(\cdot)
 \Longrightarrow
 (I_{\frak{q}_{1}, L_{1,\mathbb{R}}\cap K_{\mathbb{R}}}^{\frak{g},K_{\mathbb{R}}})^{i+j}(\cdot)$,
 we have
\begin{align*}
\pi(\mathcal{O}, \Lambda, \frak{q}_1)
= \sum_j (-1)^{i+j}(I_{\frak{q}_{\op{Im}}, L_{\op{Im},\mathbb{R}}\cap K_{\mathbb{R}}}^{\frak{g},K_{\mathbb{R}}})^i
(I_{\frak{q}_1\cap \frak{l_{\op{Im}}}, L_{\mathbb{R}}\cap K_{\mathbb{R}}}^{\frak{l_{\op{Im}}},L_{\op{Im},\mathbb{R}}\cap K_{\mathbb{R}}})^{j}
 (\Gamma_{\lambda}\otimes \mathbb{C}_{\rho(\mathfrak{n})})
\end{align*}
and a similar equation for
 $\pi(\mathcal{O}, \Lambda, \frak{q}_2)$.
Consequently, it is enough to show that 
\begin{align*}
(I_{\frak{q}_1\cap \frak{l_{\op{Im}}}, L_{\mathbb{R}}\cap K_{\mathbb{R}}}^{\frak{l_{\op{Im}}},L_{\op{Im},\mathbb{R}}\cap K_{\mathbb{R}}})^{j}(\Gamma_{\lambda}\otimes \mathbb{C}_{\rho(\frak{n})})
\simeq (I_{\frak{q}_2\cap \frak{l_{\op{Im}}}, L_{\mathbb{R}}\cap K_{\mathbb{R}}}^{\frak{l_{\op{Im}}},L_{\op{Im},\mathbb{R}}\cap K_{\mathbb{R}}})^{j}(\Gamma_{\lambda}\otimes \mathbb{C}_{\rho(\frak{n})}).
\end{align*}
However, the parabolic subalgebra $\frak{q}_1\cap \frak{l_{\op{Im}}}$
 of $\frak{l_{\op{Im}}}$ is defined over $\mathbb{R}$ and hence
the functor $I_{\frak{q}_1\cap \frak{l_{\op{Im}}}, L_{\mathbb{R}}\cap K_{\mathbb{R}}}^{\frak{l_{\op{Im}}},L_{\op{Im},\mathbb{R}}\cap K_{\mathbb{R}}}$ is a real parabolic induction.
Then for parabolic inductions, one can show the independence of the polarization by seeing that they have the same distribution character (see page 352 of \cite{Kna86}).
\end{proof}

\bibliographystyle{amsalpha}
\bibliography{UniversalBibtex}

\end{document}